\newif\ifMS
\DeclarePairedDelimiterXPP{\opnorm}[1]{}{\lVert}{\rVert}{_{\mathrm{op}}}{#1}
\DeclarePairedDelimiterXPP{\nucnorm}[1]{}{\lVert}{\rVert}{_{*}}{#1}
\DeclarePairedDelimiterXPP{\normlt}[1]{}{\lVert}{\rVert}{_{L_2}}{#1}
\DeclarePairedDelimiterXPP{\nucnormlt}[1]{}{\lVert}{\rVert}{_{*,L_2}}{#1}
\DeclarePairedDelimiterXPP{\opnormlt}[1]{}{\lVert}{\rVert}{_{\mathrm{op},L_2}}{#1}
\DeclarePairedDelimiterXPP{\normFlt}[1]{}{\lVert}{\rVert}{_{\mathrm{F},L_2}}{#1}
\DeclarePairedDelimiterXPP{\iplt}[2]{}{\langle}{\rangle}{_{L_2}}{#1,#2}
\newcommand{\st}{\text{ s.t. }}
\newcommand{\tansp}{T_U}
\newcommand{\Ptansp}{\scrP_{\tansp}}
\newcommand{\PG}{P_{G}}
\newcommand{\PGp}{P_{G^\perp}}
\newcommand{\real}{\operatorname{Re}}
\newcommand{\imag}{\operatorname{Im}}
\newcommand{\gtrank}{r}
\newcommand{\SFp}{\F^p_1}
\newcommand{\crdvar}{w}
\newcommand{\incohp}{\mu}
\newcommand{\recmap}{T_{y,\lambda}}
\newcommand{\constF}{c_{\F}}
\newcommand{\constR}{c_{\R}}
\newcommand{\constC}{c_{\C}}
	\newcommand{\mymathbold}{\symbf}%
	\newcommand{\mymathbold}{\bm}%
\newcommand{\scrbar}[1]{\overline{\mathcal{#1}}}
\newcommand{\scrhat}[1]{\widehat{\mathcal{#1}}}
\newcommand{\scrtl}[1]{\widetilde{\mathcal{#1}}}
\DeclareMathOperator{\E}{\mathbf{E}}
\newcommand{\rank}{\operatorname{rank}}
\newcommand{\tr}{\operatorname{tr}}
\newcommand{\diag}{\operatorname{diag}}
\newcommand{\ddiag}{\operatorname{ddiag}}
\DeclarePairedDelimiter{\norm}{\lVert}{\rVert}
\DeclarePairedDelimiter{\abs}{\lvert}{\rvert}
\DeclarePairedDelimiter{\parens}{(}{)}
\DeclarePairedDelimiter{\brackets}{[}{]}
\DeclarePairedDelimiterX{\ip}[2]{\langle}{\rangle}{#1,#2}
\DeclarePairedDelimiterXPP{\normsub}[2]{}{\lVert}{\rVert}{_{#2}}{#1}
\DeclarePairedDelimiterXPP{\ipsub}[3]{}{\langle}{\rangle}{_{#3}}{#1,#2}
\DeclarePairedDelimiterXPP{\ipHS}[2]{}{\langle}{\rangle}{_{\mathrm{HS}}}{#1, #2}
\DeclarePairedDelimiterXPP{\normHS}[1]{}{\lVert}{\rVert}{_{\mathrm{HS}}}{#1}
\DeclarePairedDelimiterXPP{\ipF}[2]{}{\langle}{\rangle}{_{\mathrm{F}}}{#1, #2}
\DeclarePairedDelimiterXPP{\normF}[1]{}{\lVert}{\rVert}{_{\mathrm{F}}}{#1}
\DeclarePairedDelimiterXPP{\dkl}[2]{\operatorname{D_{KL}}}{(}{)}{}{#1 \: \delimsize\Vert \: #2}
\DeclarePairedDelimiterXPP{\restr}[2]{}{{}}{\vert}{_{#2}}{#1}
\newcommand{\ones}{\mathbf{1}}
\newcommand{\R}{\mathbf{R}}
\newcommand{\C}{\mathbf{C}}
\newcommand{\Z}{\mathbf{Z}}
\newcommand{\F}{\mathbf{F}}
\newcommand{\range}{\operatorname{range}}
\newcommand{\spn}{\operatorname{span}}
\newcommand{\poissondist}{\operatorname{Poisson}}
\newcommand{\negqquad}{\mspace{-36mu}}
\theoremstyle{plain}%
\newtheorem{theorem}{Theorem}
\newtheorem{lemma}{Lemma}%
\theoremstyle{definition}%
\newtheorem{assumption}{Assumption}%
\Crefname{assumption}{Assumption}{Assumptions}
\newcommand{\fundingack}{This work was supported in part by the Swiss State Secretariat for Education, Research and Innovation (SERI) under contract MB22.00027 during the author's time at EPFL.}
\begin{document}

\title{Phase retrieval via overparametrized nonconvex optimization:\\nonsmooth amplitude loss landscapes}

\author{Andrew D.\ McRae%
	\ifMS \thanks{Manuscript received November 24, 2025; revised April 27, 2026. \fundingack{}}\fi %
	\thanks{The author is with CERMICS, CNRS, ENPC, Institut Polytechnique de Paris, Marne-la-Vallée, France. E-mail: \href{mailto:andrew.mcrae@enpc.fr}{andrew.mcrae@enpc.fr}.
	}%
}

\maketitle

\ifMS
\markboth{Journal of \LaTeX\ Class Files,~Vol.~14, No.~8, August~2015}%
{Shell \MakeLowercase{\textit{et al.}}: Bare Demo of IEEEtran.cls for IEEE Journals}
\fi

\begin{abstract}
	We study nonconvex optimization for phase retrieval and the more general problem of semidefinite low-rank matrix sensing;
	in particular, we focus on the global nonconvex landscape of overparametrized versions of the nonsmooth amplitude least-squares loss as well as a smooth reformulation of this loss based on the PhaseCut approach.
	We first give a general, deterministic result on properties of second-order critical points for a general class of loss functions;
	we then specialize this result to the nonsmooth amplitude loss and, additionally, prove nearly identical results for a smooth reformulation (similar to PhaseCut) as a synchronization problem over spheres.
	Finally, we show the usefulness of these tools by proving high-probability landscape guarantees in two settings:
	(1) phase retrieval with isotropic sub-Gaussian measurements,
	and (2) phase retrieval in a general (possibly infinite-dimensional) Hilbert space with Gaussian measurements.
	In both cases, our results give state-of-the-art and statistically optimal guarantees with only a constant amount of overparametrization (in the well-studied case of isotropic sub-Gaussian measurements, such statistical guarantees had previously required greater degrees of overparametrization/relaxation);
	this demonstrates the potential of overparametrized nonconvex optimization as a principled and scalable algorithmic approach to phase retrieval.
\end{abstract}

\ifMS
\begin{IEEEkeywords}
	Phase retrieval, low-rank matrix sensing, nonconvex optimization landscapes, Burer-Monteiro factorization.
\end{IEEEkeywords}
\fi

\section{Introduction}
\label{sec:intro}
In phase retrieval, we want to estimate a vector $x_*$ from (possibly noisy) measurements of the form $y_i \approx \abs{\ip{f_i}{x_*}}$ for $i = 1, \dots, n$,
where $x_*$ and $f_1, \dots, f_n$ are vectors in $\F^d$;
$\F$ is the field of either real numbers ($\F = \R$) or complex numbers ($\F = \C$),
and $\ip{\cdot}{\cdot}$ denotes the standard Euclidean inner (dot) product on $\F^d$.
This is a critical problem in, for example, computational imaging,
where the linear part $\ip{f_i}{x_*}$ often represents the (complex) amplitude of an electromagnetic wave at a point after some linear diffraction process,
but we can only physically measure the strength (amplitude) of the wave and not its phase.

We can write this formally as the generalized linear model
\begin{equation}
	\label{eq:pr_model_abs}
	\begin{aligned}
		y_i &= \abs{\ip{f_i}{x_*}} + \varepsilon_i, \quad i = 1,\dots, n, \quad \text{or} \\
		y &= \abs{F x_*} + \varepsilon \in \R^n,
	\end{aligned}
\end{equation}
where $\varepsilon_1, \dots, \varepsilon_n \in \R$ represent measurement noise, we denote
\[
	F \coloneqq \begin{bmatrix*} f_1^* \\ \vdots \\ f_n^* \end{bmatrix*} \in \F^{n \times d}, \quad y = \begin{bmatrix*} y_1 \\ \vdots \\ y_n \end{bmatrix*}, \quad \text{and} \quad \varepsilon = \begin{bmatrix*} \varepsilon_1 \\ \vdots \\ \varepsilon_n \end{bmatrix*},
\]
and the absolute value of a vector is understood elementwise.%
\footnote{By the usual convention on the complex inner product, $(F x_*)_i = \ip{x_*}{f_i} = \overline{\ip{f_i}{x_*}}$, but we find the ordering $\ip{x_*}{f_i}$ unnatural. We will endeavor to be precise in the rare cases where this complex conjugation matters.}

An equivalent version of this is
\begin{equation}
	\label{eq:pr_model_quad}
	\begin{aligned}
		y_i^2 &= \abs{\ip{f_i}{x_*}}^2 + \xi_i, \quad i = 1,\dots, n,
		\quad \text{or} \\
		y^2 &= \abs{F X}^2 + \xi,
	\end{aligned}
\end{equation}
where the square of a vector is understood elementwise,
and $\xi$ again represents noise (note that $\xi_i = 2 \abs{\ip{f_i}{x_*}} \varepsilon_i + \varepsilon_i^2$).


\subsection{General semidefinite matrix sensing}
\label{sec:intro_ms}
A convenient analysis framework for phase retrieval is low-rank matrix sensing.
This arises from the fact that $\abs{\ip{f_i}{x_*}}^2 = \ip{f_i f_i^*}{x_* x_*^*}$,
where $\ip{\cdot}{\cdot}$ applied to matrices is the elementwise Euclidean (or Frobenius, or trace) inner product; this reformulation is sometimes called the ``lifting'' trick.
Our problem is thus equivalent to that of recovering a rank-1 matrix $Z_* = x_* x_*^*$ from (noisy) linear measurements of the form $\ip{A_i}{Z_*}$,
where $A_i = f_i f_i^*$.

As in the paper \cite{McRae2026a}, of which the present work is a continuation, much of our theoretical analysis of phase retrieval generalizes to the problem of estimating
a rank-$r$ positive semidefinite (PSD) matrix $Z_*$ from (possibly noisy) measurements $\approx \ip{A_i}{Z_*}$ for matrices $A_1, \dots, A_n$ that are also PSD.
We can write $Z_* = X_* X_*^*$ for some matrix $X_* \in \F^{d \times r}$.
This generalization includes the important case of ordinary phase retrieval when the signal $x_*$ is real ($\F = \R$), but the measurement vectors $f_i$ are complex;
we can then take the rank-2 real measurement matrix $A_i = \real(f_i f_i^*)$.

With this setup, following the notation of \cite{Balan2016,Balan2022},
we denote, for arbitrary $r' \geq 1$, the maps $\alpha, \beta \colon \F^{d \times r'} \to \R^n$ defined by
\begin{align*}
	\alpha(X) &\coloneqq \begin{bmatrix*} \ip{A_1}{X X^*}^{1/2} \\ \vdots \\ \ip{A_n}{X X^*}^{1/2} \end{bmatrix*} \quad \text{and} \\
	\beta(X) &\coloneqq \begin{bmatrix*} \ip{A_1}{X X^*} \\ \vdots \\ \ip{A_n}{X X^*} \end{bmatrix*} = \alpha^2(X).
\end{align*}
In the phase retrieval case where $A_i = f_i f_i^*$ for vectors $f_1, \dots f_n \in \F^d$,
we have
\[
	\alpha(X) = \abs{F X}, \quad \text{and} \quad \beta(X) = \abs{F X}^2,
\]
where the absolute value is interpreted as the vector of row norms.

The model \eqref{eq:pr_model_abs} thus generalizes to
\begin{equation}
	\label{eq:gen_model_sqrt}
	\begin{aligned}
		y_i &= \ip{A_i}{X_* X_*^*}^{1/2} + \varepsilon_i, \quad i = 1,\dots, n, \quad \text{or} \\
		y &= \alpha(X_*) + \varepsilon.
	\end{aligned}
\end{equation}
Equivalently, we can write, similarly to the quadratic model \eqref{eq:pr_model_quad},
\begin{equation}
	\label{eq:gen_model_quad}
	\begin{aligned}
		y_i^2 &= \ip{A_i}{X_* X_*^*} + \xi_i, \quad i = 1,\dots, n,
	\quad \text{or} \\
	y^2 &= \beta(X_*) + \xi.
	\end{aligned}
\end{equation}
We will, throughout this paper, assume that $y_1, \dots, y_n \geq 0$.
This is natural due to the inherent nonnegativity of the models,
and, in practice, negative values due to measurement error can be truncated to zero.
\subsection{Recovery via optimization}
\label{sec:intro_opt}
Given phaseless measurements $y_i \approx \abs{\ip{f_i}{x_*}}$ for $i = 1, \dots, n$,
an important question is how to estimate the vector of interest $x_*$.
There are many ways to do this;
see \Cref{sec:relwork} for some pointers toward the vast literature on this topic.
Many approaches use optimization to fit a variable $x \in \F^d$ to the data.
For example, one simple method is the following least-squares problem based on the quadratic model \eqref{eq:pr_model_quad}:
\begin{equation}
	\label{eq:unreg_pr_quartic}
	\min_{x \in \F^d}~\frac{1}{2n} \sum_{i=1}^n ( \abs{\ip{f_i}{x}}^2 - y_i^2 )^2.
\end{equation}
The generalized version of this, based on \eqref{eq:gen_model_quad}, is
\begin{equation}
	\label{eq:unreg_gen_quartic}
	\min_{X \in \F^{d \times p}}~\frac{1}{2n} \underbrace{\sum_{i=1}^n ( \ip{A_i}{X X^*} - y_i^2 )^2}_{= \norm{\beta(X) - y^2}^2},
\end{equation}
where $p$ represents the (maximum) rank of the matrices over which we are searching (typically, we would set $p = r$ if this is known).
This approach is widely used and studied in large part because it is smooth (the objective is a quartic polynomial)
and because it arises naturally from the low-rank matrix sensing framework described in \Cref{sec:intro_ms}.

Another least-squares approach, which will be the main focus of this paper, is based on the model \eqref{eq:pr_model_abs}:
\begin{equation}
	\label{eq:unreg_pr_sqrt}
	\min_{x \in \F^d}~\frac{1}{n} \sum_{i=1}^n ( \abs{\ip{f_i}{x}} - y_i )^2.
\end{equation}
This is often known as an ``amplitude'' formulation,
as, in imaging problems (as discussed above),
$\abs{\ip{f_i}{x_*}}$ is often the amplitude of an electromagnetic wave at a point;
in contrast, the problem \eqref{eq:unreg_pr_quartic} is an ``intensity'' formulation, as the energy in the wave is proportional to $\abs{\ip{f_i}{x}}^2$.
Generalized to low-rank semidefinite matrix sensing, this becomes
\begin{equation}
	\label{eq:unreg_gen_sqrt}
	\min_{X \in \F^{d \times p}}~\frac{1}{n} \underbrace{\sum_{i=1}^n ( \ip{A_i}{X X^*}^{1/2} - y_i )^2}_{= \norm{\alpha(X) - y}^2}.
\end{equation}
There are several reasons to prefer this loss function over the quartic one of \eqref{eq:unreg_pr_quartic} and \eqref{eq:unreg_gen_quartic}:
\begin{itemize}
	\item In practice, this loss function often works better in terms of reconstruction error, algorithmic convergence rate, and the number of measurements required.
	See, for example, \cite{Yeh2015,Wang2018b,Zhang2016} for empirical evidence and further discussion.
	\item This loss function is a good (local) approximation to the Poisson (negative) log-likelihood function,
	which is a natural choice in many imaging problems; see, for example, \cite{Seifert2023}.
	It furthermore is finite everywhere, unlike the Poisson log-likelihood function.
	\item For obtaining theoretical guarantees for ordinary phase retrieval, the fact that the objective function in \eqref{eq:unreg_pr_quartic} and \eqref{eq:unreg_gen_quartic} is quartic in the measurement vectors $\{f_i\}_i$ makes the landscape analysis quite sensitive to large measurements.
	For example, even with Gaussian measurements (that is, the vectors $f_1, \dots, f_n$ are independent standard Gaussian vectors),
	heavy tails lead to considerable difficulty in the analysis and result in some logarithmic factors in the results we can obtain
	(the paper \cite{Liu2024} suggests this difficulty is inevitable for this loss function, at least without using completely different analysis tools).
	Our statistical guarantees for \eqref{eq:unreg_gen_sqrt} in \Cref{sec:stats} will indeed avoid this issue.
\end{itemize}
Both formulations \eqref{eq:unreg_gen_quartic} and \eqref{eq:unreg_gen_sqrt} are nonconvex and thus possibly have spurious local optima.
The problem \eqref{eq:unreg_pr_sqrt} has yet another difficulty:
it is \emph{nonsmooth} due to the square root
(in \eqref{eq:unreg_pr_sqrt}, this appears in the unsquared absolute value).
Nonsmoothness makes practical optimization and theoretical analysis more delicate
as important quantities such as the gradient and Hessian may not exist everywhere.

One way around this nonsmoothness is to try to reformulate the problem to be smooth.
For example, in the ordinary phase retrieval problem \eqref{eq:unreg_pr_sqrt} in the complex case $\F = \C$,
we can replace each absolute value of the form $\abs{a}$ by a product $a s$, where $s \in \C$ is an auxiliary unit-modulus complex number
(we describe this in more detail in \Cref{sec:phasecut}).
On the one hand, this gives us a smooth problem as the unit circle in $\C$ is a smooth manifold.
On the other hand, the problem is still nonconvex, and it is further complicated by a manifold constraint.
In the paper \cite{Waldspurger2015} which suggested this formulation (``PhaseCut''),
the problem was relaxed to a semidefinite program, but we want to avoid this as it greatly increases the size and computational burden of the problem.
Thus there remains considerable difficulty in understanding how to solve practically and efficiently the nonconvex and nonsmooth problems \eqref{eq:unreg_pr_sqrt} and \eqref{eq:unreg_gen_sqrt}.

\subsection{Summary of main contributions}
In this paper, we are interested in theoretical guarantees for practical algorithms;
can we prove that, under reasonable statistical assumptions on the measurements, there is a computationally practical algorithm that will (e.g., with high probability) return a good estimate of the ground truth?
Most existing theoretical results fall into one of the following categories (see \Cref{sec:relwork} for more details):
\begin{itemize}
	\item Many study direct optimization approaches to nonconvex problems similar to those above.
	The results that give statistically optimal results in general only apply to the ordinary phase retrieval problem and make quite idealistic assumptions like Gaussian measurements.
	The results applying to more general problems and measurements are typically statistically suboptimal in terms of how large they require $n$, the number of measurements, to be.
	\item Most state-of-the-art statistical guarantees, outside of ordinary phase retrieval and Gaussian measurements,
	are for semidefinite relaxations (replacing the matrix $X X^*$ by an arbitrary PSD matrix in \eqref{eq:unreg_gen_quartic} or a similar problem, thus making the problem convex).
	However, this greatly increases the computational complexity compared to direct nonconvex approaches,
	as this requires optimization over $\approx d^2$ variables.
\end{itemize}
The recent paper \cite{McRae2026a}, of which the present work is a continuation, attempted to address these issues via nonconvex partial relaxations of the quartic problem \eqref{eq:unreg_gen_quartic}:
we relax (or overparametrize) the problem by setting the optimization rank $p$ to be larger than the ground truth rank $r$ (in ordinary phase retrieval, $r = 1$).
That paper showed that, in a variety of settings, this rank overparametrization allows the nonconvex problem to have a benign landscape,
that is, every local optimum is global (or at least is a good estimator in a statistical sense).
Since many local optimization algorithms can find such local optima (or, more technically, second-order critical points; see \Cref{sec:determ_direct} for further discussion), this gives theoretical guarantees for computationally practical algorithms.
When $p$ is a logarithmic multiple of the true rank $r$,
we can obtain statistically optimal results that had previously only been shown for semidefinite programming approaches;
this can greatly decrease the computational burden for theoretically principled algorithms when the dimension $d$ is large.

This present work seeks to expand on \cite{McRae2026a} by considering alternative loss functions to \eqref{eq:unreg_gen_quartic},
primarily the nonsmooth loss in \eqref{eq:unreg_gen_sqrt}.
As discussed above in \Cref{sec:intro_opt},
there are several practical and principled reasons to prefer this formulation.
However, the nonsmoothness greatly complicates the analysis.
In \Cref{sec:determ_direct}, we develop deterministic landscape analysis tools for a general class of possibly nonsmooth loss functions.
This class includes the (smooth) quartic loss of \eqref{eq:unreg_gen_quartic} (we obtain \Cref{thm:landscape_quad}, which is a refinement of \cite[Lem.~1]{McRae2026a}),
the nonsmooth loss of \eqref{eq:unreg_gen_sqrt} (\Cref{thm:landscape_sqrt}),
and the nonsmooth (and possibly infinite-valued) Poisson log-likelihood loss (\Cref{thm:landscape_poiss}).
These results are deterministic in the problem data $(A_i, y_i)_i$ and are stepping stones to further statistical guarantees.

In parallel, in \Cref{sec:phasecut},
we study an alternative smooth formulation of \eqref{eq:unreg_gen_sqrt} as a optimization problem over spheres based on the PhaseCut approach of \cite{Waldspurger2015}.
This is interesting practically because it avoids all issues of nonsmoothness;
even if we know a nonsmooth nonconvex landscape is benign in some sense,
algorithm design is more complicated, and there are few algorithmic convergence guarantees in the literature.
Furthermore, this smooth formulation connects the semidefinite matrix sensing problem to a class of \emph{synchronization} problems for which there has recently been much interest and many nonconvex landscape results in the optimization, control theory, and dynamical systems communities.
We give (\Cref{thm:landscape_phasecut}) deterministic landscape guarantees nearly identical to those (\Cref{thm:landscape_sqrt}) for the original nonsmooth problem \eqref{eq:unreg_gen_sqrt}.

We then demonstrate, in \Cref{sec:stats}, the power of our deterministic landscape tools for obtaining statistical guarantees.
For clarity and brevity we focus on the rank-$1$ ordinary phase retrieval problem,
although our deterministic tools apply to general semidefinite matrix sensing problems.
In \Cref{sec:subG}, we give a result (\Cref{thm:finite_subG}, parallel to \cite[Thm.~3]{McRae2026a})
for phase retrieval with general sub-Gaussian measurements;
only a constant (rather than logarithmic as in \cite{McRae2026a}) level of overparametrization is necessary to ensure statistically optimal recovery (in terms of error and sample complexity) with high probability.
This gives theoretical justification to using the nonsmooth loss function in \eqref{eq:unreg_gen_sqrt} as well as its smooth reformulation over spheres.

Finally, we show that, with some additional regularization,
we can obtain theoretical guarantees for infinite-dimensional phase retrieval:
our \Cref{thm:infdim} in \Cref{sec:infdim} gives optimal recovery results for phase retrieval with Gaussian measurements in a general Hilbert space.
Although our assumptions are idealized,
this result is a potential step toward principled and practical algorithms for phase retrieval in infinite dimensions (particularly in reproducing kernel Hilbert spaces),
which is a topic of significant recent interest (see \Cref{sec:relwork}).

\subsection{Notation}
For reference, we compile here some notation that will be used throughout the paper.
Certain other more specialized notation will be defined later as needed.

For an integer $m \geq 1$, we denote by $I_m$ the $m \times m$ identity matrix.
For matrices $A, B$ of equal size (including ordinary vectors), we denote by $\ip{A}{B} = \tr(B^* A)$
their elementwise Euclidean (Frobenius) inner product.
For a matrix $A$, we denote its Frobenius (elementwise $\ell_2$), operator, and nuclear norms respectively by $\norm{A}$, $\opnorm{A}$, and $\nucnorm{A}$.

For a vector $v \in \F^m$ (with $\F = \R$ or $\F = \C$),
we denote its Euclidean ($\ell_2$), $\ell_\infty$, and $\ell_1$ norms respectively by $\norm{v}$, $\norm{v}_\infty$, and $\norm{v}_1$.
We will also write $v^2 \in \F^m$ to mean the elementwise square of $v$.
Its elementwise absolute value is denoted by $\abs{v} \in \R^m$.
We will occasionally overload this notation to apply to a matrix $V \in \F^{m \times p}$:
$\abs{V} \in \R^m$ is the vector whose elements are the ($\ell_2$) norms of the rows of $V$ (which are vectors in $\F^p$).

We overload the notation $\diag$ as follows: for a vector $v \in \F^m$, $\diag(v) \in \F^{m \times m}$ is the diagonal matrix whose diagonal elements are the elements of $v$.
For $A \in \F^{m \times m}$, $\diag(A) \in \F^m$ is the vector of the diagonal elements of $A$.

We write the real and imaginary parts of $a \in \C$ as $\real(a)$ and $\imag(a)$ respectively,
and we write its complex conjugate as $\abr$.
We also overload this notation to apply elementwise if $a$ is a vector or matrix.

We will often informally write $a \lesssim b$ to mean that $a \leq C b$ for some unspecified constant $C > 0$ that does not depend on problem parameters such as the dimension or number of measurements.
$a \gtrsim b$ means $b \lesssim a$.
$a \approx b$ means $a \lesssim b$ and $a \gtrsim b$ simultaneously.

\section{Related work}
\label{sec:relwork}
In this section, we review the most relevant literature on phase retrieval and the various optimization approaches to these problems.
For further reading on the importance (particularly in imaging) of the phase retrieval problem and an overview of the many algorithms and theoretical analyses developed for it,
see the surveys \cite{Shechtman2015,Fannjiang2020,Dong2023}.

There are relatively few results on global nonconvex optimization landscapes for phase retrieval.
For the quartic problem \eqref{eq:unreg_pr_quartic},
the first global landscape guarantee was given in \cite{Sun2018},
which was then improved in \cite{Cai2023}; this result (also recovered in \cite{McRae2026a}) ensures a globally benign landscape with $n \gtrsim d \log d$ Gaussian measurements.
It is unlikely that we can do any better for this quartic loss function without fundamentally different arguments;
the paper \cite{Liu2024} shows that, in the regime $n \lesssim d \log d$, the landscape becomes more delicate,
so standard analysis methods fail (nevertheless, the paper \cite{SaraoMannelli2020} provides numerical evidence and heuristic arguments that, indeed, the landscape is benign whenever $n \gtrsim d$).
To avoid this logarithmic factor (which is statistically suboptimal),
a different approach is needed.
In \cite{McRae2026a}, we showed that overparametrization (by at most a logarithmic factor, i.e., $p \approx \log d$) of the quartic formulation gets around this statistical suboptimality.
Another option is to consider a different loss function,
which we do in this paper.

To our knowledge, there are no existing works which study the global landscape of nonsmooth problems of the form \eqref{eq:unreg_pr_sqrt} or \eqref{eq:unreg_gen_sqrt}.
The paper \cite{Davis2020} studies a different nonsmooth formulation,
but its results only characterize (first-order) critical points rather than local optima or second-order critical points.
To get around the issue of nonsmoothness,
a recent series of papers \cite{Li2020a,Cai2021,Cai2022,Cai2022a} has studied the landscape of \emph{smoothed} versions of \eqref{eq:unreg_pr_sqrt}.
These works indeed show that,
with these alternative loss functions,
one obtains a benign landscape with $n \gtrsim d$ Gaussian measurements.
Interestingly, this applies to the non-overparametrized case,
whereas our positive landscape results for the non-smoothed version do require some overparametrization ($p > 1$).
It is unclear if this is a fundamental drawback of the unsmoothed loss function or merely of our analysis.

There has been far more work on understanding how well we can solve problems like \eqref{eq:unreg_pr_sqrt} by a two-stage algorithm consisting of an initialization (e.g., by spectral methods) and then refinement by local optimization (e.g., by some form of gradient descent)
\cite{Xiao2021,Li2025,Wang2018b,Wang2018a,Zhang2017a,Luo2020}.
Some of these works (\cite{Xiao2021,Luo2020}) consider smoothed versions of \eqref{eq:unreg_pr_sqrt} similar to those mentioned in the previous paragraph.
In the case of Gaussian measurements, these papers show that $n \gtrsim d$ measurements suffice.
Furthermore, the recent paper \cite{Li2025} provides guarantees for a physically-inspired coded diffraction pattern measurement model with optimal sample complexity.

Two-stage algorithms and analyses for the quartic formulation in \eqref{eq:unreg_pr_quartic} have also been well-studied;
see, for example, \cite{Ma2020,Peng2024,Gao2021a,Chen2019b} for state-of-the-art results and further reading.
The Poisson maximum likelihood loss function which we briefly consider in \Cref{sec:determ_poisson} is also a popular practical choice (again, see, for example, \cite{Yeh2015,Seifert2023}),
but the loss function is difficult to analyze theoretically,
so guarantees are few; a recent work in this direction is \cite{Gao2025}.

The sub-Gaussian measurement model described in \Cref{sec:subG} has seen much recent work,
as it is one of the most general models for which statistically (near-)optimal recovery guarantees have been proved.
Several algorithms and theoretical analyses under variants of the assumptions of our \Cref{thm:finite_subG} have been developed in the papers \cite{Eldar2014,Krahmer2018,Krahmer2020,Gao2021a,Peng2024}.
In particular, the paper \cite{Krahmer2020} shows, under conditions nearly identical to our \Cref{thm:finite_subG},
that a semidefinite programming approach can recover the ground truth with optimal sample complexity (i.e., requiring only $n \gtrsim d$).
The best results we are aware of for nonconvex approaches (prior to \cite{McRae2026a}) are those of \cite{Gao2021a,Peng2024}, which show that, under similar sub-Gaussian measurement assumptions,
a two-stage initialization + gradient descent algorithm
can recover the ground truth if $n \gtrsim d \log^2 d$.

Phase retrieval in infinite dimensions (and, particularly, in function spaces) has seen much recent interest.
However, the results we are aware of primarily consider questions of identifiability and stability rather than specific reconstruction algorithms,
so they are not directly comparable to our results.
See, for example, the survey \cite{Grohs2020} and the more recent works \cite{Alharbi2024,Freeman2024} for further background and references.
One work that does give guarantees for an algorithm is \cite{Chen2020a};
another more recent work that specifically considers reproducing kernel Hilbert spaces is \cite{Fuehr2025}.
However, again, their results are not readily comparable to ours.

Overparametrization and benign landscapes have also been extensively studied in synchronization problems,
of which our PhaseCut-based formulation in \Cref{sec:phasecut} is an example.
These problems have much broader application in areas such as robotics, dynamical systems, signal processing, and graph clustering.
For an introduction and further references, see, for example, \cite{Ling2023a,Ling2025,Endor2026,McRae2025preprinta}.
The basic geometric tools we use in \Cref{sec:phasecut} are taken directly from this literature.
However, our problem instances have considerably different properties than those studied previously,
so new methods are also needed.

These themes of benign landscapes and overparametrization have also been important in low-rank matrix sensing, of which the semidefinite matrix sensing problem described in \Cref{sec:intro_ms} is an instance.
This literature primarily considers quartic formulations like \eqref{eq:unreg_gen_quartic}.
However, most existing results assume a type of \emph{restricted isometry property}
which does not, in general, hold for the measurements that we consider in this paper (getting around this difficulty was a major part of the contribution of \cite{McRae2026a}).
See, for example, \cite{Bi2022,Zhang2025a}, for state-of-the-art results and further references.

\section{Direct optimization: deterministic results}
\label{sec:determ_direct}
In this section, we develop deterministic nonconvex landscape analysis tools for a general class of possibly nonsmooth loss functions.
The nonsmooth problem \eqref{eq:unreg_gen_sqrt} from \Cref{sec:intro} is included,
but our results are more general and thus may be of independent interest.

We consider regularized nonconvex optimization problems of the form
\begin{equation}
	\label{eq:ncvx_gen}
	\begin{gathered}
		\min_{X \in \F^{d \times p}}~L_\lambda(X X^*), \quad \text{where} \\
		L_\lambda(Z) \coloneqq \frac{1}{n} \sum_{i=1}^n \ell(\ip{A_i}{Z}, y_i) + \lambda \tr Z,
	\end{gathered}
\end{equation}
$\ell(b, \upsilon)$ is a loss function\footnote{We write the first argument as $\ip{A_i}{Z}$ rather than $\ip{A_i}{Z}^{1/2}$ to make derivatives cleaner.} that we will specify,
$p$ is a search rank parameter,
and $\lambda \geq 0$ is a regularization parameter.

A semidefinite program (SDP) version of \eqref{eq:ncvx_gen} is
\begin{equation}
	\label{eq:sdp_gen}
	\min_{Z \succeq 0}~L_\lambda(Z).
\end{equation}
If $\ell(b, \upsilon)$ is convex in its first argument, this is a convex SDP.
In this paper, we will consider general $\ell$ with the following properties:
\begin{assumption}
	\label{assump:lossfunc}
	For all $\upsilon \geq 0$, the loss function $\ell(b, \upsilon$) is convex and lower semicontinuous in $b$ on $[0, \infty)$.
	Furthermore, it is finite and twice differentiable in $b$ on $(0, \infty)$.
	We denote the first and second derivatives in $b$ by $\ell'(b, \upsilon)$ and $\ell''(b, \upsilon)$ respectively.
	We furthermore denote
	\[
		\ell'(0, \upsilon) \coloneqq \lim_{b \to 0^+} \ell'(b, \upsilon)
	\]
	which could be $- \infty$.
\end{assumption}

The gradient\footnote{In the complex case, we can compactly define the gradient of a real-valued function $f \colon \C^D \to \R$ at $x \in \C^D$ as the vector $v \in \C^D$ such that $x' \mapsto f(x) + \real( \ip{v}{x' - x} )$ is the best local linear approximation of $f$ around $x$.} of $L_\lambda(Z)$ with respect to the $d \times d$ matrix variable $Z$ is (when it is well-defined)
\begin{equation}
	\label{eq:gradL}
	\nabla L_\lambda(Z)= \frac{1}{n} \sum_{i=1}^n \ell'(\ip{A_i}{Z}, y_i) A_i + \lambda I_d.
\end{equation}

In this paper, we are interested in understanding \emph{second-order critical points} of nonconvex problems like \eqref{eq:ncvx_gen}.
For a twice-differentiable objective, these are points where the gradient with respect to $X$ is zero and the Hessian is positive semidefinite.
However, as our objective is potentially nonsmooth,
we need a generalized notion of second-order criticality.
We say that for a function $f \colon \F^D \to \R$,
$x \in \F^D$ is a second-order critical point if
\[
	\liminf_{x' \to x} \frac{f(x') - f(x)}{\norm{x' - x}^2} \geq 0.
\]
In other words, there is no direction starting from $x$ along which $f$ decreases at least quadratically.
This condition is comparable to standard notions of second-order local optimality in the literature; for example, in our case, this is precisely the condition $d^2(x \mid 0)(w) \geq 0$ of \cite[Thm.~13.24]{Rockafellar1998}.
There is potential ambiguity in the case $f(x) = + \infty$, but, as we will see below (in \Cref{lem:socp_char} and its proof), this is not an issue for our problem.

In the smooth case, there are theoretical results guaranteeing that local search methods such as gradient descent or trust-region algorithms will converge to a second-order critical point;
see, for example, \cite{Curtis2018,Lee2019b}.
Although finding local minima of nonsmooth problems is NP-hard in general \cite{Kornowski2024},
the specific problems described in this section have some nice properties (again, see \Cref{lem:socp_char} below),
so we think it likely that algorithmic guarantees could be obtained.
However, this is beyond the scope of the present paper.
In practice, to avoid questions of nonsmoothness and to ensure numerical stability of gradient-based algorithms, we can replace $\ell(b, \upsilon)$ by
\[
	\ell_\delta(b, \upsilon) \coloneqq \ell(b + \delta, \sqrt{\upsilon^2 + \delta})
\]
for some smoothing parameter $\delta \geq 0$. If $\delta > 0$, our assumptions on $\ell$ ensure that $\ell_\delta(b, \upsilon)$ is finite and twice differentiable in $b$ on $[0, \infty)$ for all $\upsilon \geq 0$.
We could explicitly adapt our theoretical results to account for general $\delta \geq 0$,
but for brevity and simplicity we omit this;
we would typically choose a very small $\delta > 0$, which will have little effect on the results.

The main result of this section is the following lemma:
\begin{lemma}
	\label{lem:landscape_gen}
	Suppose $\ell$ satisfies \Cref{assump:lossfunc}.
	Let $A_1, \dots, A_n \succeq 0$ and $y_1, \dots, y_n \geq 0$,
	and furthermore assume that $A_i \neq 0$ for all $i = 1, \dots, n$.
	Then every second-order critical point $X$ of \eqref{eq:ncvx_gen}
	satisfies the following:
	\begin{itemize}
		\item $L_\lambda(X X^*)$ is finite, and, for all $i$, $\ell'(\ip{A_i}{X X^*}, y_i)$ is finite, so $\nabla L_\lambda(X X^*)$ as given by \eqref{eq:gradL} is well-defined.
		\item $\nabla L_\lambda(X X^*) X = 0$.
		\item For all $Z' \succeq 0$,
		\begin{align*}
			0 &\leq \ip{\nabla L_\lambda(X X^*)}{Z'} \\
			&\quad + \frac{2}{\constF p n} \cdot \sum_{\substack{i=1\\ \mathclap{\ip{A_i}{X X^*} > 0}}}^n \ell''(\ip{A_i}{X X^*}, y_i) \ip{A_i}{X X^*} \ip{A_i}{Z'},
		\end{align*}
		where $\constR = 1$, and $\constC = 2$.
	\end{itemize}
\end{lemma}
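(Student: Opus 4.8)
The plan is to exploit the second-order criticality condition by probing with carefully chosen perturbation directions of $X$, and then to translate the resulting inequalities into statements about the lifted variable $Z = XX^*$. I would start by writing $f(X) = L_\lambda(XX^*)$ and computing, for a perturbation $X \mapsto X + t\Delta$ with $t \to 0$, the expansion of $f(X + t\Delta) - f(X)$. Because $\ell(\cdot, y_i)$ is convex and twice differentiable on $(0,\infty)$ but possibly nonsmooth (with derivative $-\infty$) at $0$, the first thing to establish is the first bullet: if $\ip{A_i}{XX^*} = 0$ for some $i$ with $A_i \neq 0$, then there is a rank-one direction $\Delta$ making $\ip{A_i}{(X+t\Delta)(X+t\Delta)^*}$ grow linearly in $|t|$ (since $A_i \succeq 0$, $A_i \neq 0$, and $XX^*$ annihilates the range of $A_i$, one can pick $\Delta$ in that range). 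Along such a direction the term $\ell(\ip{A_i}{\cdot}, y_i)$ decreases faster than quadratically — more precisely, like $\ell(ct, y_i) - \ell(0,y_i)$ with slope tending to $-\infty$ — contradicting second-order criticality. Hence $\ip{A_i}{XX^*} > 0$ for all $i$, $\ell'(\ip{A_i}{XX^*}, y_i)$ is finite, and $\nabla L_\lambda(XX^*)$ from \eqref{eq:gradL} is well-defined. The second bullet, $\nabla L_\lambda(XX^*) X = 0$, then follows by taking $\Delta = X C$ for arbitrary $C \in \F^{p \times p}$ (or the appropriate first-order stationarity argument): on the set where all $\ip{A_i}{XX^*} > 0$ the objective is smooth in a neighborhood, so the ordinary gradient with respect to $X$, which is $2\nabla L_\lambda(XX^*) X$, must vanish.

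For the third bullet, the key is to pick a perturbation that, at second order, sees both $\nabla L_\lambda(XX^*)$ and the curvature of $\ell$ in a way that mirrors $Z'$. Following the overparametrization technique of \cite{McRae2025preprintb}, since $X \in \F^{d \times p}$ has $p$ columns, I would average the second-order inequality over perturbations of the form $\Delta = v e_j^*$ (a chosen vector $v \in \F^d$ placed in column $j$) across $j = 1, \dots, p$, and more generally over a random column or a random phase/rotation, to produce the factor $1/p$ and the field-dependent constant $\constF$. Writing $Z' = VV^*$ (or $Z' = vv^*$ rank one, then extending by linearity and PSD-ness), the direction $\Delta$ should be built from $V$ so that the first-order term $\ip{\nabla f(X)}{\Delta}$-type contribution is controlled by $\ip{\nabla L_\lambda(XX^*) X}{\Delta} = 0$ and the genuine second-order term produces exactly $\ip{\nabla L_\lambda(XX^*)}{Z'}$ plus the curvature sum $\sum_i \ell''(\ip{A_i}{XX^*}, y_i)\,\partial_t^2 \ip{A_i}{(X+t\Delta)(X+t\Delta)^*}^{?}$. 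Here one must be careful: the second argument of $\ell$ is $\ip{A_i}{\cdot}$, not its square root, so the chain rule gives cross terms; expanding $\ip{A_i}{(X+t\Delta)(X+t\Delta)^*} = \ip{A_i}{XX^*} + 2t\,\real\ip{A_i X}{\Delta} + t^2 \ip{A_i}{\Delta\Delta^*}$ and plugging into the Taylor expansion of $\ell$ yields a $t^2$ coefficient of the form $\ell' \ip{A_i}{\Delta\Delta^*} + 2 \ell'' (\real\ip{A_i X}{\Delta})^2$. Summing over the averaged $\Delta$'s and using $\nabla L_\lambda(XX^*) X = 0$ to kill the linear-in-$\real\ip{A_iX}{\Delta}$ cross-term-with-gradient, and using $\lambda \tr(\cdot)$ to supply the $\lambda I_d$ piece of $\nabla L_\lambda$, should reassemble into the claimed inequality, with $\ip{A_i}{XX^*}$ appearing because the averaging of $(\real\ip{A_iX}{\Delta})^2$ over the randomization in $\Delta$ produces $\ip{A_iXX^*}{\cdot}$-type quantities; the restriction of the sum to $i$ with $\ip{A_i}{XX^*}>0$ is automatic from the first bullet.

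The main obstacle I expect is the bookkeeping in the averaging step: choosing precisely the right ensemble of perturbation directions $\Delta$ (random column index, random unitary rotation of the columns, random sign/phase) so that (i) the first-order term vanishes or is absorbed into $\nabla L_\lambda(XX^*)X = 0$, (ii) the $\ell'$-weighted part of the second-order term assembles into $\ip{\nabla L_\lambda(XX^*)}{Z'}$ including the correct $\lambda\tr Z'$ contribution, and (iii) the $\ell''$-weighted part produces exactly $\frac{2}{\constF p n}\sum_i \ell'' \ip{A_i}{XX^*}\ip{A_i}{Z'}$ with the stated $\constR = 1$, $\constC = 2$ — the factor $\constF$ tracking whether the phase randomization is over $\{\pm 1\}$ (real) or the complex unit circle (complex), which affects how $(\real\ip{A_iX}{\Delta})^2$ averages. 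A secondary subtlety is ensuring the nonsmooth second-order criticality definition (a $\limsup$ over $x' \to x$, not a directional second derivative) is correctly applied: one must verify that along each chosen $\Delta$ the function is actually twice differentiable near $X$ (guaranteed once all $\ip{A_i}{XX^*}>0$), so that the $\limsup$ reduces to the ordinary second-order Taylor coefficient, and that a nonnegative combination of such directional inequalities — obtained by integrating against the randomizing measure — remains valid.
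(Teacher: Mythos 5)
Your overall strategy---Taylor-expand $L_\lambda((X+t\Delta)(X+t\Delta)^*)$, use second-order criticality along rank-one directions, and average over an orthonormal basis of $\F^p$ with a sign/phase randomization to produce the $\constF p$ factor---is the same as the paper's, but there are two concrete gaps. The first concerns the degenerate indices. Since $A_i \succeq 0$, $\ip{A_i}{XX^*}=0$ is equivalent to $A_iX=0$, so for \emph{any} $\Delta$ the cross term $2t\real\ip{A_iX}{\Delta}$ vanishes and $\ip{A_i}{(X+t\Delta)(X+t\Delta)^*}=t^2\ip{A_i}{\Delta\Delta^*}$ grows quadratically, not linearly, in $t$. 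The contradiction with second-order criticality survives (because $[\ell(\gamma t^2,y_i)-\ell(0,y_i)]/t^2 \to \gamma\,\ell'(0^+,y_i)$), but it only applies when $\ell'(0^+,y_i)=-\infty$. Your conclusion that $\ip{A_i}{XX^*}>0$ for all $i$ is therefore false in general: for the amplitude loss with $y_i=0$, or for the everywhere-smooth quartic loss, a second-order critical point may have $\ip{A_i}{XX^*}=0$. The correct conclusion is only the first bullet ($\ell'$ finite), the objective need not be smooth in a neighborhood of $X$, and so your derivation of $\nabla L_\lambda(XX^*)X=0$ and your remark that the restriction of the sum ``is automatic'' both rest on a false premise. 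The paper instead partitions the indices and keeps the one-sided expansion $\ell(t^2\ip{A_i}{\Delta\Delta^*},y_i)-\ell(0,y_i)=t^2\ell'(0^+,y_i)\ip{A_i}{\Delta\Delta^*}+o(t^2)$ for the degenerate ones.

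The second gap is that the third bullet is missing its pivotal step. With $\Delta=uv^*$ the curvature term involves $(\real(u^*A_iXv))^2=\tfrac14\ip{A_i}{Xvu^*+u(Xv)^*}^2$, and summing this over an orthonormal basis $\{v_k\}$ produces quantities of the form $\ip{A_iXX^*A_i}{uu^*}$, \emph{not} the product $\ip{A_i}{XX^*}\ip{A_i}{uu^*}$; no choice of averaging measure fixes this. The product form in the lemma comes from the Cauchy--Schwarz inequality in the semi-inner product induced by $A_i\succeq 0$, namely $\ip{A_i}{Xvu^*+u(Xv)^*}^2\le 4\ip{A_i}{Xvv^*X^*}\ip{A_i}{uu^*}$, after which summing over the basis gives $\ip{A_i}{XX^*}\ip{A_i}{uu^*}$; in the complex case one additionally uses $2\real(z)^2=\abs{z}^2+\real(z^2)$ and probes both $v$ and $iv$ so that the $\real(z^2)$ terms cancel, yielding $\constC=2$. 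Without this inequality the expansion does not ``reassemble'' into the stated bound with the constant $2/(\constF p)$; this is the crux of the argument rather than bookkeeping.
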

We defer the proof to \Cref{sec:determ_proof}.
The assumption that each $y_i \geq 0$ was introduced and discussed in \Cref{sec:intro_ms}.
The assumption that each $A_i$ is nonzero avoids the potential issue of $L_\lambda$ being infinite everywhere.
In practice, we can discard any zero measurement matrices because the corresponding measurements give no useful information.

We can compare this result to what we would expect for the convex problem \eqref{eq:sdp_gen}.
Optimality of $Z \succeq 0$ for \eqref{eq:sdp_gen} is equivalent to $\nabla L_\lambda(Z) Z = 0$ and $\nabla L_\lambda(Z) \succeq 0$.
The criticality property $\nabla L_\lambda(X X^*) X = 0$ clearly implies $\nabla L_\lambda(Z) Z = 0$ for $Z = X X^*$.
However, we do not quite obtain the second condition $\nabla L_\lambda(Z) \succeq 0$;
note that this would be equivalent to $\ip{ \nabla L_\lambda(X X^*) }{Z'} \geq 0$ for all $Z' \succeq 0$.
The additional nonnegative terms involving $\ell''$ in the inequality of \Cref{lem:landscape_gen} are thus the price we pay for the nonconvexity of \eqref{eq:ncvx_gen}.
Increasing the optimization rank $p$ reduces this penalty
(although, as discussed in \cite{McRae2026a}, this does not immediately imply that this is a good thing to do).

For this result to be useful, we must consider specific examples of loss functions $\ell$,
which we do in the following subsections.

\subsection{Example: quartic loss}
We first consider the simple example of the smooth least-squares loss
\[
	\ell(b, \upsilon) = \frac{1}{2}(b - \upsilon^2)^2.
\]
The problem \eqref{eq:ncvx_gen} then becomes (noting that $\tr(X X^*) = \norm{X}^2$)
\begin{equation}
	\label{eq:opt_ncvx_quad}
	\min_{X \in \F^{d \times p}}~\frac{1}{2n} \norm{\beta(X) - y^2}^2 + \lambda \norm{X}^2.
\end{equation}
Note that this is simply a regularized version of \eqref{eq:unreg_gen_quartic};
it is quartic because $\beta(X)$ is quadratic in $X$.
In this case, our \Cref{lem:landscape_gen} recovers the recent result of \cite{McRae2026a}:
\begin{theorem}
	\label{thm:landscape_quad}
	Under the model \eqref{eq:gen_model_quad} for $X_* \in \F^{d \times r}$, 
	every second-order critical point $X$ of \eqref{eq:opt_ncvx_quad} satisfies, for any matrix $R \in \F^{p \times r}$,
	\begin{align*}
		&\frac{1}{n} \norm{\beta(X) - \beta(X_*)}^2 \\
		&\quad\leq \frac{1}{n} \ip{\xi}{\beta(X) - \beta(X_*)} + \lambda(\norm{X_*}^2 - \norm{X}^2) \\
		&\qquad + \frac{2}{\constF p + 2}\parens*{ \frac{1}{n} \ip{y^2}{\beta(X_* - X R)} - \lambda \norm{X_* - X R}^2 },
	\end{align*}
	where $\constF$ is defined in \Cref{lem:landscape_gen}.
\end{theorem}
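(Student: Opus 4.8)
The plan is to specialize \Cref{lem:landscape_gen} and turn the crank algebraically: the lemma does essentially all the work, so this is really a corollary. For the quartic loss $\ell(b,\upsilon) = \frac12(b-\upsilon^2)^2$ we have $\ell'(b,\upsilon) = b - \upsilon^2$ and $\ell''(b,\upsilon) \equiv 1$; in particular $\ell$ is smooth, so the first bullet of \Cref{lem:landscape_gen} is automatic and, writing $\nabla L \coloneqq \nabla L_\lambda(XX^*)$, the formula \eqref{eq:gradL} becomes $\nabla L = \frac1n\sum_i (\beta_i(X) - y_i^2) A_i + \lambda I_d$, a Hermitian matrix. Fix $R \in \F^{p\times r}$ and set $W \coloneqq X_* - XR$.

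The one structural point to record beyond the lemma is that the criticality condition $\nabla L\,X = 0$ also gives $X^*\nabla L = (\nabla L\,X)^* = 0$, so $\ip{\nabla L}{XMN^*} = \tr(NM^*X^*\nabla L) = 0$ for any conformable $M,N$. Expanding $X_*X_*^* = (XR+W)(XR+W)^*$ and killing the three terms that carry an $X$ factor inside the trace, this yields $\ip{\nabla L}{X_*X_*^*} = \ip{\nabla L}{WW^*}$, and likewise $\ip{\nabla L}{XX^*} = 0$. Applying the third bullet of \Cref{lem:landscape_gen} with $Z' = WW^* \succeq 0$ (the indices with $\beta_i(X) = 0$ drop out because of the $\ip{A_i}{XX^*}$ factor, and $\ell'' \equiv 1$) gives
\[
	0 \;\leq\; \ip{\nabla L}{WW^*} + \frac{2}{\constF p}\cdot\frac1n\ip{\beta(X)}{\beta(W)}.
\]

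From here it is bookkeeping, exploited in two ways. Writing $\ip{\nabla L}{WW^*} = \frac1n\ip{\beta(X) - y^2}{\beta(W)} + \lambda\norm{W}^2$ and rearranging the display above isolates the cross term as $\frac1n\ip{\beta(X)}{\beta(W)} \leq \frac{\constF p}{\constF p + 2}\parens*{\frac1n\ip{y^2}{\beta(W)} - \lambda\norm{W}^2}$. On the other hand, using $\ip{\nabla L}{WW^*} = \ip{\nabla L}{X_*X_*^* - XX^*}$ together with the model $y^2 = \beta(X_*) + \xi$, a direct expansion gives $\ip{\nabla L}{WW^*} = -\frac1n\norm{\beta(X) - \beta(X_*)}^2 + \frac1n\ip{\xi}{\beta(X) - \beta(X_*)} + \lambda(\norm{X_*}^2 - \norm{X}^2)$. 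Substituting both of these into the displayed inequality and recalling $W = X_* - XR$ gives exactly the claimed bound. There is no genuine obstacle here; the only thing to watch is the constant, which emerges as $\frac{2}{\constF p + 2}$ precisely because the cross term $\frac1n\ip{\beta(X)}{\beta(W)}$ occurs both inside $\ip{\nabla L}{WW^*}$ with coefficient $1$ (via the $\beta(X)$ part of $\beta(X) - y^2$) and in the overparametrization penalty with coefficient $\frac{2}{\constF p}$, so it can be moved to one side and self-bounded.
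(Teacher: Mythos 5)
Your setup is right, and all three ingredients you assemble are correct: the two expressions for $\ip{\nabla L}{WW^*}$ (the direct expansion and the one via $\nabla L\,X=0$) and the specialization of \Cref{lem:landscape_gen} to $Z'=WW^*$. But the step that ``isolates the cross term'' has its inequality reversed, and the proof as written does not close. Set $q \coloneqq \frac1n\ip{y^2}{\beta(W)} - \lambda\norm{W}^2$. Substituting the identity $\ip{\nabla L}{WW^*} = \frac1n\ip{\beta(X)}{\beta(W)} - q$ into $0 \leq \ip{\nabla L}{WW^*} + \frac{2}{\constF p}\cdot\frac1n\ip{\beta(X)}{\beta(W)}$ gives $\frac{\constF p+2}{\constF p}\cdot\frac1n\ip{\beta(X)}{\beta(W)} \geq q$, i.e.\ the \emph{lower} bound $\frac1n\ip{\beta(X)}{\beta(W)} \geq \frac{\constF p}{\constF p+2}\,q$, not the upper bound you state. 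The upper bound is what your final substitution requires (you replace the penalty term $\frac{2}{\constF p}\cdot\frac1n\ip{\beta(X)}{\beta(W)}$ by a quantity that must dominate it), and it is false in general: at a noiseless global minimizer with $\lambda=0$ one has $\beta(X)=y^2$, so your claimed bound would read $q \leq \frac{\constF p}{\constF p+2}\,q$, which fails whenever $q>0$.

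The repair is immediate and uses only what you already have: eliminate the cross term rather than $\ip{\nabla L}{WW^*}$. Writing $t \coloneqq \ip{\nabla L}{WW^*}$ and using $\frac1n\ip{\beta(X)}{\beta(W)} = t + q$, the second-order inequality becomes $0 \leq t + \frac{2}{\constF p}(t+q) = \frac{\constF p+2}{\constF p}\,t + \frac{2}{\constF p}\,q$, hence $t \geq -\frac{2}{\constF p+2}\,q$; combining this with your second expression $t = -\frac1n\norm{\beta(X)-\beta(X_*)}^2 + \frac1n\ip{\xi}{\beta(X)-\beta(X_*)} + \lambda(\norm{X_*}^2-\norm{X}^2)$ gives exactly the claimed bound. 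This is essentially the paper's route: it rewrites the penalty term as $\ip{\nabla L}{Z'} - \lambda\tr Z' + \frac1n\ip{y^2}{\beta(W)}$ and solves the resulting inequality for $\ip{\nabla L}{Z'}$.
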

This is a slight generalization of \cite[Lem.~1]{McRae2026a}, which only considered the unregularized case $\lambda = 0$.
In addition, in the complex case where $\constC = 2$, we have improved the constant.
As the properties of this loss function are thoroughly studied in that paper,
we do not develop them further in the present paper.

\begin{proof}[Proof of \Cref{thm:landscape_quad}]
The derivatives of $\ell$ with respect to the first argument are
\begin{align*}
	\ell'(b, \upsilon) &= b - \upsilon^2, \\
	\ell''(b, \upsilon) &= 1.
\end{align*}
The identity $b \ell''(b, \upsilon) = \ell'(b, \upsilon) + \upsilon^2$ implies, for any $Z' \succeq 0$ and for each $i$,
\begin{align*}
	&\ell''(\ip{A_i}{X X^*}, y_i) \ip{A_i}{X X^*} \ip{A_i}{Z'} \\
	&\quad= \ell'(\ip{A_i}{X X^*}, y_i) \ip{A_i}{Z'} + y_i^2 \ip{A_i}{Z'}.
\end{align*}
As $\ell''(b, \upsilon)$ exists for all $b$ and $\upsilon$, there is no need to separate into cases based on whether $\ip{A_i}{X X^*} = 0$.
We then obtain
\begin{align*}
	&\negqquad \frac{1}{n} \sum_{i=1}^n \ell''(\ip{A_i}{X X^*}, y_i) \ip{A_i}{X X^*} \ip{A_i}{Z'} \\
	&= \frac{1}{n} \sum_{i=1}^n \parens*{ \ell'(\ip{A_i}{X X^*}, y_i) \ip{A_i}{Z'} + y_i^2 \ip{A_i}{Z'} } \\
	&= \ip{\nabla L_\lambda(X X^*)}{Z'} - \lambda \tr Z' + \frac{1}{n} \ip*{ \sum_{i=1}^n y_i^2 A_i }{Z'}.
\end{align*}
From \Cref{lem:landscape_gen} and some algebra, we then obtain
\[
	0 \leq \ip{\nabla L_\lambda(X X^*)}{Z'} + \frac{2}{\constF p + 2}\parens*{ \frac{1}{n} \ip*{ \sum_{i=1}^n y_i^2 A_i }{Z'} - \lambda \tr Z' }.
\]
Finally, we take $Z' = (X_* - X R)(X_* - X R)^*$,
in which case
\[
	\ip*{\sum_{i=1}^n y_i^2 A_i }{Z'} = \ip{y^2}{\beta(X_* - X R)}.
\]
By the fact (also from \Cref{lem:landscape_gen}) that $\nabla L_\lambda(X X^*) X = 0$, we have
\begin{align*}
	&\ip{\nabla L_\lambda(X X^*)}{Z'} \\
	&\quad= \ip{\nabla L_\lambda(X X^*)}{X_* X_*^* - X X^*} \\
	&\quad= \frac{1}{n} \sum_{i=1}^n (\ip{A_i}{X X^*} - y_i^2)\ip{A_i}{X_* X_*^* - X X^*} \\
	&\qquad + \lambda \tr(X_* X_*^* - X X^*) \\
	&\quad= - \frac{1}{n} \sum_{i=1}^n \ip{A_i}{X X^* - X_* X_*^*}^2 + \frac{1}{n} \sum_{i=1}^n \xi_i \ip{A_i}{X X^* - X_* X_*^*} \\
	&\qquad + \lambda (\norm{X_*}^2 - \norm{X}^2) \\
	&\quad= - \frac{1}{n} \norm{\beta(X) - \beta(X_*)}^2 + \frac{1}{n} \ip{\xi}{\beta(X) - \beta(X_*)} \\
	&\qquad + \lambda (\norm{X_*}^2 - \norm{X}^2).
\end{align*}
The result easily follows.
\end{proof}

\subsection{Example: nonsmooth ``amplitude'' loss}
The main choice of loss function we study in this paper is
\[
	\ell(b, \upsilon) \coloneqq (\sqrt{b} - \upsilon)^2,
\]
which is nonsmooth at $b = 0$ whenever $\upsilon > 0$.
The problem \eqref{eq:ncvx_gen} becomes
\begin{equation}
	\label{eq:opt_ncvx_sqrt}
	\min_{X \in \F^{d \times p}}~\frac{1}{n} \norm{\alpha(X) - y}^2 + \lambda \norm{X}^2,
\end{equation}
which is a regularized version of \eqref{eq:unreg_gen_sqrt}.
In this case, \Cref{lem:landscape_gen} implies the following:
\begin{theorem}
	\label{thm:landscape_sqrt}
	Let the search rank $p$ satisfy
	\begin{itemize}
		\item $p \geq 2$ in the real case $\F = \R$, or
		\item $p \geq 1$ in the complex case $\F = \C$.
	\end{itemize}
	Under the model \eqref{eq:gen_model_sqrt} for $X_* \in \F^{d \times r}$, every second-order critical point $X$ of \eqref{eq:opt_ncvx_sqrt} satisfies, for any $R \in \F^{p \times r}$,
	\begin{align*}
		&\frac{1}{n} \norm{\alpha(X) - \alpha(X_*)}^2 \\
		&\quad \leq \frac{2}{n} \ip{\varepsilon}{\alpha(X) - \alpha(X_*)} + \lambda (\norm{X_*}^2 - \norm{X}^2) \\
		&\qquad + \frac{1}{\constF p - 1} \parens*{\frac{1}{n} \norm{\alpha(X_* - XR)}^2 + \lambda \norm{X_* - X R}^2},
	\end{align*}
	where $\constF$ is defined in \Cref{lem:landscape_gen}.
\end{theorem}
We further develop the consequences of this result in \Cref{sec:stats}.
If we can find a global optimum (for example, by setting $p = n$ and using convex semidefinite programming), that optimum will satisfy
\[
	\frac{1}{n} \norm{\alpha(X) - \alpha(X_*)}^2 \leq \frac{2}{n} \ip{\varepsilon}{\alpha(X) - \alpha(X_*)} + \lambda (\norm{X_*}^2 - \norm{X}^2).
\]
The additional term in the bound of \Cref{thm:landscape_sqrt} is, similarly to the general result \Cref{lem:landscape_gen}, the price we pay for only finding a second-order critical point.

\begin{proof}[Proof of \Cref{thm:landscape_sqrt}]
	We have, for $\ell(b, \upsilon) = (\sqrt{b} - \upsilon)^2$,
	\begin{align*}
		\ell'(b, \upsilon) &= 1 - \frac{\upsilon}{\sqrt{b}}, \quad \text{and} \\
		\ell''(b, \upsilon) &= \frac{\upsilon}{2 b^{3/2}} 
	\end{align*}
	for $b > 0$.
	The identity $2 b \ell''(b, \upsilon) = 1 - \ell'(b, \upsilon)$
	implies, for any $Z' \succeq 0$ and for each $i$ such that $\ip{A_i}{X X^*} > 0$,
	\begin{align*}
		&\ell''(\ip{A_i}{X X^*}, y_i) \ip{A_i}{X X^*} \ip{A_i}{Z'}
		\\
		&\quad= \frac{1}{2} [ 1 - \ell'(\ip{A_i}{X X^*}, y_i)] \ip{A_i}{Z'}.
	\end{align*}
	\Cref{lem:landscape_gen} ensures that, for all $i$, $\ell'(\ip{A_i}{X X^*}, y_i)$ is finite.
	Furthermore, note that $1 - \ell'(\ip{A_i}{X X^*}, y_i) \geq 0$.
	We then have
	\begin{align*}
		&\frac{1}{n} \cdot \sum_{\substack{i=1\\ \mathclap{\ip{A_i}{X X^*} > 0}}}^n \ell''(\ip{A_i}{X X^*}, y_i) \ip{A_i}{X X^*} \ip{A_i}{Z'} \\
		&\quad\leq \frac{1}{2} \parens*{ \frac{1}{n} \ip*{ \sum_{i=1}^n A_i }{Z'} - \frac{1}{n} \sum_{i=1}^n \ell'(\ip{A_i}{X X^*}, y_i) \ip{A_i}{Z'} } \\
		&\quad= \frac{1}{2} \parens*{ \frac{1}{n} \ip*{ \sum_{i=1}^n A_i }{Z'} - \ip{\nabla L_\lambda(X X^*) }{Z'} + \lambda \tr Z' }.
	\end{align*}
	The inequality of \Cref{lem:landscape_gen} (multiplying by $\constF p$) then implies
	\begin{equation}
		\label{eq:sqrt_lemgen}
		\begin{aligned}
			0 &\leq (\constF p - 1) \ip{\nabla L_{\lambda}(X X^*)}{Z'} \\
			&\quad + \ip*{ \sum_{i=1}^n A_i }{Z'} + \lambda \tr Z'.
		\end{aligned}
	\end{equation}
	Finally, similarly to the proof of \Cref{thm:landscape_quad}, we take $Z' = (X_* - XR)(X_* - XR)^*$, in which case
	\begin{equation}
		\label{eq:sqrt_alpha}
		\ip*{ \sum_{i=1}^n A_i }{Z'} = \norm{\alpha(X_* - XR)}^2,
	\end{equation}
	and we use $\nabla L_{\lambda}(X X^*) X = 0$ to obtain
	\begin{align*}
		&\ip{\nabla L_{\lambda}(X X^*)}{Z'} \\
		&\quad= \ip{\nabla L_{\lambda}(X X^*)}{X_* X_*^* - X X^*} \\
		&\quad= \frac{1}{n} \sum_{i=1}^n \ell'(\ip{A_i}{X X^*}, y_i) \ip{A_i}{X_* X_*^* - X X^*} \\
		&\qquad+ \lambda (\norm{X_*}^2 - \norm{X}^2).
	\end{align*}
	For $i$ such that $\ip{A_i}{X X^*} > 0$, we have, as $y_i \geq 0$,
	\begin{equation}
		\label{eq:sqrt_ineq}
		\begin{aligned}
			&\ell'(\ip{A_i}{X X^*}, y_i) \ip{A_i}{X_* X_*^* - X X^*} \\
			&\quad= \parens*{ 1 - \frac{y_i}{\ip{A_i}{X X^*}^{1/2} }}\ip{A_i}{X_* X_*^* - X X^*} \\
			&\quad\leq \parens*{ 1 - \frac{2 y_i}{\ip{A_i}{X X^*}^{1/2} + \ip{A_i}{X_* X_*^*}^{1/2} }} \\
			&\qquad \times \ip{A_i}{X_* X_*^* - X X^*} \\
			&\quad= (\ip{A_i}{X X^*}^{1/2} - \ip{A_i}{X_* X_*^*}^{1/2} - 2 \varepsilon_i) \\
			&\qquad \times (\ip{A_i}{X_* X_*^*}^{1/2} - \ip{A_i}{X X^*}^{1/2}) \\
			&\quad= -(\ip{A_i}{X X^*}^{1/2} - \ip{A_i}{X_* X_*^*}^{1/2})^2 \\
			&\qquad + 2\varepsilon_i (\ip{A_i}{X X^*}^{1/2} - \ip{A_i}{X_* X_*^*}^{1/2}).
		\end{aligned}
	\end{equation}
	For $i$ such that $\ip{A_i}{X X^*} = 0$, we have (see \Cref{assump:lossfunc})
	\begin{align*}
		\ell'(\ip{A_i}{X X^*} , y_i)
		&= \lim_{b \to 0^+} \ell'(b, y_i) \\
		&= \lim_{b \to 0^+} 1 - \frac{y_i}{\sqrt{b}}.
	\end{align*}
	As this is finite by \Cref{lem:landscape_gen}, we must have $y_i = 0$,
	in which case $\ell'(\ip{A_i}{X X^*} , y_i) = 1$.
	Then, noting that in this case $\varepsilon_i = - \ip{A_i}{X_* X_*^*}^{1/2}$, the first and last expressions in \eqref{eq:sqrt_ineq} are both equal to $\ip{A_i}{X_* X_*^*}$.
	Thus \eqref{eq:sqrt_ineq} holds for all $i$.
	We then obtain
	\begin{align*}
		&\ip{\nabla L_{\lambda}(X X^*)}{Z'} \\
		&\quad\leq - \frac{1}{n} \norm{\alpha(X) - \alpha(X_*)}^2 + \frac{2}{n} \ip{\varepsilon}{\alpha(X) - \alpha(X_*)} \\
		&\qquad + \lambda (\norm{X_*}^2 - \norm{X}^2).
	\end{align*}
	Combining this with \eqref{eq:sqrt_lemgen} and \eqref{eq:sqrt_alpha} and dividing by $\constF p - 1$ completes the proof.
\end{proof}

\subsection{Example: Poisson loss}
\label{sec:determ_poisson}
In this section, we consider the loss function
\[
	\ell(b, \upsilon) \coloneqq b - \upsilon^2 \log b,
\]
with which the general nonconvex problem \eqref{eq:ncvx_gen} becomes
\begin{equation}
	\label{eq:opt_ncvx_poiss}
	\min_{X \in \F^{d \times p}} \frac{1}{n} \sum_{i=1}^n (\ip{A_i}{X X^*} - y_i^2 \log \ip{A_i}{X X^*}) + \lambda \norm{X}^2.
\end{equation}
Under a Poisson noise model for \eqref{eq:gen_model_quad}, that is, $y_i^2 \sim \poissondist(\ip{A_i}{Z_*})$,
the problem \eqref{eq:opt_ncvx_poiss} is a regularized, nonconvex, and possibly overparametrized maximum likelihood problem.
For this loss, we have the following landscape result:
\begin{theorem}
	\label{thm:landscape_poiss}
	Let the search rank $p$ satisfy
	\begin{itemize}
		\item $p \geq 3$ in the real case $\F = \R$, or
		\item $p \geq 2$ in the complex case $\F = \C$.
	\end{itemize}
	Under the model \eqref{eq:gen_model_quad} for $X_* \in \F^{d \times r}$, every second-order critical point $X$ of \eqref{eq:opt_ncvx_poiss} satisfies, for any $R \in \F^{p \times r}$,
	\begin{align*}
		&\frac{1}{n} \sum_{i=1}^n \parens*{1 - \frac{y_i^2}{\ip{A_i}{X X^*}}} \ip{A_i}{X X^* - X_* X_*^*} \\
		&\quad \leq \lambda (\norm{X_*}^2 - \norm{X}^2) \\
		&\qquad + \frac{2}{\constF p - 2} \parens*{\frac{1}{n} \norm{\alpha(X_* - X R)}^2 + \lambda \norm{X_* - X R}^2},
	\end{align*}
	where $\constF$ is defined in \Cref{lem:landscape_gen}.
\end{theorem}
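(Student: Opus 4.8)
The plan is to mirror the proofs of \Cref{thm:landscape_quad,thm:landscape_sqrt}: differentiate $\ell$ in its first argument, extract the algebraic identity linking $b\,\ell''(b,\upsilon)$ to $\ell'(b,\upsilon)$, substitute into \Cref{lem:landscape_gen}, and then specialize the free PSD matrix $Z'$ to $(X_* - XR)(X_* - XR)^*$. For the Poisson loss $\ell(b,\upsilon) = b - \upsilon^2 \log b$ one has, for $b > 0$,
\[
\ell'(b,\upsilon) = 1 - \frac{\upsilon^2}{b}, \qquad \ell''(b,\upsilon) = \frac{\upsilon^2}{b^2},
\]
so the relevant identity is $b\,\ell''(b,\upsilon) = \upsilon^2/b = 1 - \ell'(b,\upsilon)$, structurally the same as in the amplitude-loss case except with $2b\,\ell''$ replaced by $b\,\ell''$ (which is exactly what turns the constant $\constF p - 1$ of \Cref{thm:landscape_sqrt} into $\constF p - 2$ here).

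First I would record that \Cref{lem:landscape_gen} guarantees $\ell'(\ip{A_i}{X X^*}, y_i)$ is finite for every $i$; since $\lim_{b\downarrow 0}\ell'(b,\upsilon) = -\infty$ for $\upsilon > 0$, this forces $y_i = 0$ whenever $\ip{A_i}{X X^*} = 0$, in which case $\ell'(0,0) = 1$, so that $1 - \ell'(\ip{A_i}{X X^*}, y_i) = 0$. Hence, after applying the identity, the restricted sum over $\{i : \ip{A_i}{X X^*} > 0\}$ appearing in \Cref{lem:landscape_gen} may be extended to all $i$ at no cost, and using \eqref{eq:gradL} it rewrites as
\[
\frac1n \sum_{i:\ \ip{A_i}{X X^*} > 0} \ell''(\ip{A_i}{X X^*}, y_i)\,\ip{A_i}{X X^*}\,\ip{A_i}{Z'} = \frac1n \ip*{\sum_{i=1}^n A_i}{Z'} - \ip{\nabla L_\lambda(X X^*)}{Z'} + \lambda \tr Z'.
\]
Plugging this into the inequality of \Cref{lem:landscape_gen} and multiplying through by $\constF p$ yields
\[
0 \leq (\constF p - 2)\,\ip{\nabla L_\lambda(X X^*)}{Z'} + \frac2n \ip*{\sum_{i=1}^n A_i}{Z'} + 2\lambda \tr Z',
\]
and this is precisely where the hypotheses enter: $\constF p - 2 > 0$ means $p \geq 3$ when $\constR = 1$ and $p \geq 2$ when $\constC = 2$. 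Taking $Z' = (X_* - XR)(X_* - XR)^*$, I would use $\ip*{\sum_i A_i}{Z'} = \norm{\alpha(X_* - XR)}^2$, $\tr Z' = \norm{X_* - XR}^2$, and $\nabla L_\lambda(X X^*)X = 0$ — so that the $XR(XR)^*$ term and the cross terms in $Z'$ drop, leaving $\ip{\nabla L_\lambda(X X^*)}{Z'} = \ip{\nabla L_\lambda(X X^*)}{X_* X_*^* - X X^*}$ — and then expand this last inner product via \eqref{eq:gradL} with $\ell'(\ip{A_i}{X X^*}, y_i) = 1 - y_i^2/\ip{A_i}{X X^*}$ to obtain $-\frac1n\sum_i(1 - y_i^2/\ip{A_i}{X X^*})\ip{A_i}{X X^* - X_* X_*^*} + \lambda(\norm{X_*}^2 - \norm{X}^2)$. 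Dividing by $\constF p - 2$ and rearranging gives the stated inequality.

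I do not expect a genuine obstacle here: unlike the amplitude loss of \Cref{thm:landscape_sqrt}, the Poisson $\ell$ is twice differentiable at every point with $b > 0$, so no sign analysis of individual summands or convexity-type estimate of $1 - \ell'$ is needed. The only delicate points are (i) the bookkeeping at the degenerate indices $\ip{A_i}{X X^*} = 0$, where one must invoke the finiteness conclusion of \Cref{lem:landscape_gen} to force $y_i = 0$ and adopt the convention $\ell'(0,0) = 1$ consistently on both sides of the final bound, and (ii) checking that $\constF p - 2 > 0$, which is exactly the content of the rank assumptions $p \geq 3$ in the real case and $p \geq 2$ in the complex case.
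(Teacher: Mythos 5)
Your proposal is correct and follows essentially the same route as the paper's proof, which itself defers to "similar arguments to those in the proof of \Cref{thm:landscape_sqrt}": the identity $b\,\ell''(b,\upsilon) = 1 - \ell'(b,\upsilon)$, substitution into \Cref{lem:landscape_gen}, multiplication by $\constF p$, the choice $Z' = (X_* - XR)(X_* - XR)^*$, and the use of $\nabla L_\lambda(XX^*)X = 0$. Your explicit handling of the degenerate indices (finiteness of $\ell'$ forcing $y_i = 0$, hence $1 - \ell'_i = 0$ there) and of the positivity of $\constF p - 2$ matches the details the paper leaves implicit.
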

This bound is less clean and developed than \Cref{thm:landscape_quad} or \Cref{thm:landscape_sqrt}.
There are many interesting ways we could develop it to try to obtain statistical landscape results (for example, the terms in the left-hand side sum resemble or can be lower bounded by various measures of distance between Poisson distributions), but we leave that to future work.

\begin{proof}[Proof of \Cref{thm:landscape_poiss}]
	For this choice of loss function $\ell$, for $b > 0$,
	\[
		\ell'(b, \upsilon) = 1 - \frac{\upsilon^2}{b}, \qquad \text{and} \qquad \ell''(w, \upsilon) = \frac{\upsilon^2}{b^2}.
	\]
	We now have the identity $b \ell''(b, \upsilon) = 1 - \ell'(b, \upsilon)$.
	Similar arguments to those in the proof of \Cref{thm:landscape_sqrt} (again based on \Cref{lem:landscape_gen}) then give, for any $Z' \succeq 0$.
	\begin{align*}
		0 &\leq (\constF p - 2) \ip{\nabla L_{\lambda}(X X^*)}{Z'} + \frac{2}{n} \ip*{ \sum_{i=1}^n A_i }{Z'} + 2 \lambda \tr Z'.
	\end{align*}
	Similarly to the proof of \Cref{thm:landscape_sqrt},
	we choose $Z' = (X_* - X R)(X_* - XR)^*$,
	and we have, again using the fact that $\nabla L_{\lambda}(X X^*) X = 0$ and handling the case $\ip{A_i}{X X^*} = 0$ similarly,
	\begin{align*}
		&\ip{\nabla L_{\lambda}(X X^*)}{Z'} \\
		&\quad= \ip{\nabla L_{\lambda}(X X^*)}{X_* X_*^* - X X^*} \\
		&\quad= \frac{1}{n} \sum_{i=1}^n \parens*{1 - \frac{y_i^2}{\ip{A_i}{X X^*}} } \ip{A_i}{X_* X_*^* - X X^*} \\
		&\qquad + \lambda (\norm{X_*}^2 - \norm{X}^2).
	\end{align*}
	The result again follows by some rearrangement.
\end{proof}

\subsection{Proof of general landscape result}
\label{sec:determ_proof}
In this section, we provide a proof of the general result \Cref{lem:landscape_gen}.
First, we need the following characterization of second-order critical points,
which is a generalization of standard gradient and Hessian calculations for factored matrix optimization (see, for example, \cite[Sec.~3.3]{Li2019}).
\begin{lemma}
	\label{lem:socp_char}
	Under the conditions of \Cref{lem:landscape_gen},
	for every second-order critical point $X$ of \eqref{eq:ncvx_gen},
	$L_\lambda(X X^*)$ is finite, and
	$\ell'(\ip{A_i}{X X^*}, y_i)$ is finite for all $i$.
	Thus $\nabla L_\lambda(X X^*)$ is well-defined.
	Furthermore,
	\begin{equation}
		\label{eq:gradcond}
		\nabla L_\lambda(X X^*) X = 0,
	\end{equation}
	and, for all $\Xdt \in \F^{d \times p}$,
	\begin{equation}
		\label{eq:hesscond}
		\begin{aligned}
			0 &\leq \ip{\nabla L_\lambda(X X^*)}{\Xdt \Xdt^*} \\
			&\quad + \frac{1}{2n} \cdot \sum_{\substack{i=1\\ \mathclap{\ip{A_i}{X X^*} > 0}}}^n \ell''(\ip{A_i}{X X^*}, y_i) \ip{A_i}{X \Xdt^* + \Xdt X^*}^2.
		\end{aligned}
	\end{equation}
\end{lemma}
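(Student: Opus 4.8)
The plan is to prove \Cref{lem:socp_char} by the classical second-order perturbation argument for Burer--Monteiro factored problems (as in the standard gradient/Hessian computations of \cite[Sec.~3.3]{Li2019}), modified to accommodate the possible nonsmoothness of $\ell(\cdot, y_i)$ at $0$. Fix a second-order critical point $X$. For a perturbation direction $\Xdt \in \F^{d \times p}$, set $\phi(t) \coloneqq L_\lambda\bigl((X + t\Xdt)(X + t\Xdt)^*\bigr)$; criticality of $X$ says precisely that for no $\Xdt$ does $\phi(t) - \phi(0)$ decrease quadratically fast as $t \to 0$. Write $a_i \coloneqq \ip{A_i}{X X^*}$, $g_i \coloneqq \ip{A_i}{X \Xdt^* + \Xdt X^*} \in \R$, and $h_i \coloneqq \ip{A_i}{\Xdt \Xdt^*} \geq 0$, so that $\ip{A_i}{(X + t\Xdt)(X + t\Xdt)^*} = a_i + t g_i + t^2 h_i$.

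The crucial structural observation concerns the "vanishing" measurements: when $a_i = 0$, positive semidefiniteness of $A_i$ forces $A_i^{1/2} X = 0$, hence $X^* A_i = 0$, hence $g_i = 0$ for \emph{every} $\Xdt$; so along any ray the $i$-th argument of $\ell$ equals $t^2 h_i \geq 0$. Suppose for contradiction that $a_i = 0$ and $\ell'(0, y_i) = -\infty$ for some $i$. Since $A_i \neq 0$, choose a rank-one $\Xdt$ whose only nonzero column $v$ satisfies $v^* A_i v > 0$, so that $c \coloneqq h_i > 0$; then, using monotonicity of $b \mapsto \ell'(b, y_i)$ (from convexity),
\[
	\ell(t^2 c, y_i) - \ell(0, y_i) = \int_0^{t^2 c} \ell'(s, y_i)\, ds \leq t^2 c\, \ell'(t^2 c, y_i),
\]
so this summand's contribution to $\phi(t) - \phi(0)$, divided by $t^2$, is at most $c\, \ell'(t^2 c, y_i) \to -\infty$. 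All other summands are either smooth in $t$ (where $a_j > 0$) or again of the form $t^2 h_j$ (where $a_j = 0$, contributing something $\leq 0$ or $o(t^2)$), and $\lambda\norm{X + t\Xdt}^2$ is smooth; all these contribute only $O(t)$. Hence $\phi(t) - \phi(0)$ decreases faster than $t^2$, a contradiction. (If $\ell(0, y_i) = +\infty$, as for the Poisson loss with $y_i \neq 0$, the contradiction is immediate since $\phi(0) = +\infty$ while $\phi(t)$ is finite for small $t \neq 0$ once $h_i > 0$.) Thus $\ell'(a_i, y_i)$ is finite for all $i$ and $\nabla L_\lambda(X X^*)$ is well-defined.

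It remains to expand $\phi$ to second order and read off the two conditions. For $i$ with $a_i > 0$, $\ell(\cdot, y_i)$ is $C^2$ near $a_i$, so its summand equals $\ell(a_i, y_i) + t\, \ell_i' g_i + t^2\bigl(\ell_i' h_i + \tfrac{1}{2} \ell_i'' g_i^2\bigr) + o(t^2)$, with $\ell_i', \ell_i''$ the derivatives at $(a_i, y_i)$; for $i$ with $a_i = 0$ we have $g_i = 0$ and the summand equals $\ell(0, y_i) + t^2 h_i \ell_i' + o(t^2)$, using that $\tfrac{1}{\epsilon} \int_0^\epsilon \ell'(s, y_i)\, ds \to \ell'(0^+, y_i)$, finite by the previous step. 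Adding $\lambda\norm{X + t\Xdt}^2$ and collecting powers of $t$ yields $\phi(t) = \phi(0) + t\, c_1(\Xdt) + t^2\, c_2(\Xdt) + o(t^2)$, where, recalling $\nabla L_\lambda(X X^*) = \tfrac{1}{n}\sum_i \ell_i' A_i + \lambda I$,
\[
	c_1(\Xdt) = \ip{\nabla L_\lambda(X X^*)}{X \Xdt^* + \Xdt X^*} = 2\real\ip{\nabla L_\lambda(X X^*) X}{\Xdt}, \qquad c_2(\Xdt) = \ip{\nabla L_\lambda(X X^*)}{\Xdt \Xdt^*} + \frac{1}{2n}\sum_{i:\, a_i > 0} \ell_i''\, g_i^2.
\]
If $c_1(\Xdt) \neq 0$ for some $\Xdt$, then one of $\pm\Xdt$ gives $\phi(t) - \phi(0) \sim t\, c_1 < 0$, a faster-than-quadratic decrease, contradicting criticality; hence $c_1 \equiv 0$, and taking $\Xdt = \nabla L_\lambda(X X^*) X$ gives $\norm{\nabla L_\lambda(X X^*) X}^2 = 0$, i.e.\ \eqref{eq:gradcond}. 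With $c_1 \equiv 0$, $\phi(t) - \phi(0) = t^2 c_2(\Xdt) + o(t^2)$, so criticality forces $c_2(\Xdt) \geq 0$ for every $\Xdt$, which is exactly \eqref{eq:hesscond}.

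The main obstacle is the rigorous treatment of the vanishing ($a_i = 0$) terms: one must use $g_i = 0$ there (so such terms have no first-order effect, and their second-order effect is controlled by the finite $\ell'(0, y_i)$ rather than by $\ell''$, which need not exist at $0$), and one must justify the $o(t^2)$ remainders — via convexity/monotonicity of $\ell'$, continuity of $\ell'$ on $(0, \infty)$, and, if one interprets the $\limsup$ in the definition as ranging over all $X' \to X$ rather than along rays, uniformity of those remainders over the compact set of unit directions, which follows from the local $C^2$-regularity of $\ell$ away from $0$.
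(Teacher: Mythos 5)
Your overall strategy matches the paper's: Taylor-expand $t \mapsto L_\lambda((X+t\Xdt)(X+t\Xdt)^*)$, use the key observation that $\ip{A_i}{X X^*}=0$ forces $A_i X=0$ (so vanishing measurements contribute nothing at first order), rule out $\ell'(0,y_i)=-\infty$ by contradiction, and read off \eqref{eq:gradcond} and \eqref{eq:hesscond} from the first- and second-order coefficients. The expansion itself and the derivation of the two conditions from $c_1\equiv 0$ and $c_2\geq 0$ are correct.

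There is, however, a genuine gap in the finiteness step, caused by running the steps in the wrong order. Supposing $a_i=0$ and $\ell'(0,y_i)=-\infty$ and choosing $\Xdt$ with $h_i=c>0$, you bound the offending summand's contribution, divided by $t^2$, by $c\,\ell'(t^2c,y_i)\to-\infty$, and then assert that since all other summands contribute only $O(t)$, $\phi(t)-\phi(0)$ decreases faster than $t^2$. This does not follow: at this point you have not yet shown that the first-order coefficient $c_1(\Xdt)$ vanishes (you establish $c_1\equiv 0$ only afterwards, and your expansion of the $a_j=0$ summands there uses the finiteness you are currently trying to prove), and if $c_1(\Xdt)>0$ then after dividing by $t^2$ the linear term contributes $c_1/t\to+\infty$, which can dominate the blow-up. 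This is not a hypothetical concern for the losses in this paper: for the amplitude loss $\ell(b,\upsilon)=(\sqrt{b}-\upsilon)^2$ one has $\ell(t^2c,y_i)-\ell(0,y_i)=t^2c-2y_i\sqrt{c}\,t$, so the negative contribution is itself only of order $t$, exactly comparable to $c_1 t$, and the sign of the sum depends on the constants; no contradiction is obtained.

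The fix is precisely the reordering the paper uses: first prove that the linear term vanishes, which requires only that every summand with $\ell'(0,y_i)=-\infty$ is nonpositive for small $t$ (since $\ell(\cdot,y_i)$ is decreasing near $0$); dropping those nonpositive terms gives $c_1(\Xdt)\,t\geq O(t^2)$, and applying this to both $\pm\Xdt$ yields $c_1\equiv 0$. Only then does one isolate a single offending index to conclude $\ell(t^2c,y_i)-\ell(0,y_i)\geq O(t^2)$, contradicting $\ell'(0^+,y_i)=-\infty$. (Alternatively, you could patch your argument in place by observing that if $c_1(\Xdt)\neq 0$, then one of $\pm\Xdt$ already produces a first-order decrease, which violates second-order criticality outright.) Everything after that point in your write-up is sound.
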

\begin{proof}
	In this proof, we will use standard big-O and little-o notation:
	for functions $f(t) \geq 0$ and $g(t)$,
	we write $g(t) = O(f(t))$ to mean that $\abs{g(t)} \leq C f(t)$ for some constant $C > 0$ and all sufficiently small $t \in \R$,
	and we write $g(t) = o(f(t))$ to mean that $\lim_{t \to 0} \frac{\abs{g(t)}}{f(t)} = 0$.
	
	First, we show that $L_\lambda(X X^*)$ is finite.
	Convexity rules out the possibility that $L_\lambda(X X^*) = - \infty$.
	If $L_\lambda(X X^*) = + \infty$,
	the only way the second-order criticality condition could be satisfied (with some convention on adding infinities)
	is if, for all $\Xtl$ in some neighborhood of $X$,
	$L_\lambda(\Xtl \Xtl^*) = + \infty$.
	However, as we have assumed that each $A_i \neq 0$,
	there are matrices $\Xtl$ arbitrarily close to $X$ such that $\ip{A_i}{\Xtl \Xtl^*} > 0$ for all $i$
	(consider, for example, $\Xtl = X + t \Xdt$ for $t \neq 0$, where $\Xdt$ is random and uniformly distributed on the unit sphere; one can easily check that, with probability 1, $\ip{A_i}{\Xtl \Xtl^*} > 0$ for all $i$).
	For such $\Xtl$, $L_\lambda(\Xtl \Xtl^*)$ is finite.
	This contradicts second-order criticality, so we conclude that $L_\lambda(X X^*)$ is finite.
	
	We now consider the derivatives.
	To alleviate notation, denote $\ell'_i \coloneq \ell'(\ip{A_i}{X X^*}, y_i)$ for all $i$,
	and	$\ell''_i \coloneq \ell''(\ip{A_i}{X X^*}, y_i)$ for $i$ such that $\ip{A_i}{X X^*} > 0$.
	We furthermore denote
	\begin{align*}
		I &\coloneqq \{ i : \ip{A_i}{X X^*} = 0 \}, \qquad \text{and} \\
		J &\coloneqq \{ i : \ell'_i = - \infty \}.
	\end{align*}
	Note that, by our assumptions on $\ell$,
	we have $J \subseteq I$.
	
	Furthermore, note that, as $A_i \succeq 0$,
	\[
		\ip{A_i}{X X^*} = 0 \quad \Longleftrightarrow \quad A_i X = 0.
	\]	
	For all $i \notin I$, $\ell(b, y_i)$ is twice differentiable at $b = \ip{A_i}{X X^*} > 0$.
	Using a Taylor expansion on these terms,
	for any unit-norm $\Xdt \in \F^{d \times p}$ and sufficiently small $t \in \R$, we have
	\begin{align*}
		& L_\lambda( (X + t\Xdt)(X + t \Xdt)^* ) - L_\lambda(X X^*) \\
		&\quad= L_\lambda( X X^* + t(X \Xdt^* + \Xdt X^*) + t^2 \Xdt \Xdt^* ) - L_\lambda(X X^*) \\
		&\quad= \frac{1}{n} \sum_{i=1}^n ( \ell( \ip{A_i}{X X^* + t (X \Xdt^* + \Xdt X^*) + t^2\Xdt \Xdt^*}, y_i ) \\
		&\qquad - \ell(\ip{A_i}{X X^*}, y_i) ) + \lambda \tr( t (X \Xdt^* + \Xdt X^*) + t^2\Xdt \Xdt^* ) \\
		&\quad= \frac{1}{n} \sum_{i \in I} [\ell( t^2 \ip{A_i}{\Xdt \Xdt^*}, y_i ) - \ell(0, y_i)] \\
		&\qquad + t \ip*{ \frac{1}{n} \sum_{i \notin I} \ell'_i A_i + \lambda I_d} {X \Xdt^* + \Xdt X^*} \\
		&\qquad + t^2 \parens*{\frac{1}{n} \sum_{i \notin I} \brackets*{\ell'_i \ip{A_i}{\Xdt \Xdt^*} + \frac{\ell''_i}{2} \ip{A_i}{X \Xdt^* + \Xdt X^*}^2 } + \lambda \norm{\Xdt}^2 } \\
		&\qquad + o(t^2).
	\end{align*}
	Second-order criticality of $X$ means $L_\lambda( (X + t\Xdt)(X + t \Xdt)^* ) - L_\lambda(X X^*) \geq o(t^2)$.
	Furthermore, for all $i \in I \setminus J$,
	\begin{equation}
		\label{eq:lprime_t2}
		\begin{aligned}
		\ell( t^2 \ip{A_i}{\Xdt \Xdt^*}, y_i ) - \ell(0, y_i)
		&= t^2 \ell'_i \ip{A_i}{\Xdt \Xdt^*} + o(t^2) \\
		&= O(t^2).
		\end{aligned}
	\end{equation}
	Together with the previous Taylor expansion, these facts imply
	\begin{equation}
		\label{eq:ineq_exp_t}
		\begin{aligned}
			&\frac{1}{n} \sum_{i \in J} [\ell( t^2 \ip{A_i}{\Xdt \Xdt^*}, y_i ) - \ell(0, y_i)] \\
			&\qquad + t \ip*{ \frac{1}{n} \sum_{i \notin I} \ell'_i A_i + \lambda I_d} {X \Xdt^* + \Xdt X^*} \\
			&\quad\geq O(t^2).
		\end{aligned}
	\end{equation}
	Now, the fact that $\ell'(0, y_i) = - \infty$ for all $i \in J$ implies that, for sufficiently small $t$,
	$\ell( t^2 \ip{A_i}{\Xdt \Xdt^*}, y_i ) \leq \ell(0, y_i)$ for all $i \in J$.
	We then have, for all sufficiently small $t \in \R$,
	\[
		t \ip*{ \frac{1}{n} \sum_{i \notin I} \ell'_i A_i + \lambda I_d} {X \Xdt^* + \Xdt X^*} \geq O(t^2).
	\]
	As $t$ can be positive or negative, this inequality requires
	\begin{equation}
		\label{eq:t_term_zero}
		\ip*{ \frac{1}{n} \sum_{i \notin I} \ell'_i A_i + \lambda I_d} {X \Xdt^* + \Xdt X^*} = 0.
	\end{equation}
	We now show that $J = \varnothing$.
	Suppose, by way of contradiction, that there exists some $i \in J$.
	There is a corresponding term on the left-hand side of \eqref{eq:ineq_exp_t};
	any terms corresponding to other elements of $J$ will be nonpositive for sufficiently small $t$,
	so we obtain
	\[
		\ell( t^2 \ip{A_i}{\Xdt \Xdt^*}, y_i ) - \ell(0, y_i) \geq O(t^2).
	\]
	As $A_i \succeq 0$ and $A_i \neq 0$,
	we can choose unit-norm $\Xdt \in \F^{d \times p}$ such that $\gamma \coloneqq \ip{A_i}{\Xdt \Xdt^*} > 0$.
	Then, for some $C \geq 0$ and all sufficiently small $s > 0$, we have
	\[
		\ell( \gamma s, y_i ) - \ell(0, y_i) \geq - C s.
	\]
	This is, however, incompatible with the assumption that $\lim_{b \to 0^+}~\ell'(b, y_i) = - \infty$.
	Thus we obtain a contradiction.
	Hence, we conclude that $J = \varnothing$.
	This proves the claim that $\ell'_i$ is finite for all $i$,
	so $\nabla L_\lambda(X X^*)$ is well-defined.
	
	As $A_i X = 0$ for all $i \in I$, the equality \eqref{eq:t_term_zero} implies
	\begin{align*}
		0 &= \ip*{ \frac{1}{n} \sum_{i = 1}^n \ell'_i A_i + \lambda I_d} {X \Xdt^* + \Xdt X^*} \\
		&= \ip{\nabla L_\lambda (X X^*)}{X \Xdt^* + \Xdt X^* } \\
		&= 2 \real \ip{ \nabla L_\lambda (X X^*) X }{\Xdt}.
	\end{align*}
	As $\Xdt$ can be any unit-norm matrix in $\F^{d \times p}$, this implies \eqref{eq:gradcond}.
	
	Finally, because $J = \varnothing$, \eqref{eq:lprime_t2} holds for all $i \in I$.
	With this and \eqref{eq:t_term_zero},
	we can then further develop the Taylor expansion and second-order criticality condition as
	\begin{align*}
		o(t^2)
		&\leq L_\lambda( (X + t\Xdt)(X + t \Xdt)^* ) - L_\lambda(X X^*) \\
		&= t^2 \left( \frac{1}{n} \sum_{i=1}^n \ell'_i \ip{A_i}{\Xdt \Xdt^*} \right. \\
		&\quad + \left. \frac{1}{2n} \sum_{i \notin I} \ell''_i \ip{A_i}{X \Xdt^* + \Xdt X^*}^2 + \lambda \norm{\Xdt}^2 \right) \\
		&= t^2 \Bigg( \ip{\nabla L_\lambda(X X^*) }{\Xdt \Xdt^*} \\
		&\quad + \frac{1}{2n} \sum_{i \notin I} \ell''_i \ip{A_i}{X \Xdt^* + \Xdt X^*}^2 \Bigg).
	\end{align*}
	This implies \eqref{eq:hesscond}.
\end{proof}


With this, we can prove the main landscape result:
\begin{proof}[Proof of \Cref{lem:landscape_gen}]
	Let $X$ be a second-order critical point of \eqref{eq:ncvx_gen}.
	Denote, as in the proof of \Cref{lem:socp_char},
	\[
		I = \{ i : \ip{A_i}{X X^*} = 0 \}.
	\]
	First, we use the inequality \eqref{eq:hesscond} from \Cref{lem:socp_char} with the choice of rank-1 $\Xdt = u v^*$ for some $u \in \F^d$, $v \in \F^p$.
	This gives
	\begin{equation}
		\label{eq:rank1_hess}
		\begin{aligned}
			0 &\leq \norm{v}^2 \ip{\nabla L_\lambda(X X^*)}{u u^*} \\
			&\quad + \frac{1}{2n} \sum_{i \notin I} \ell''(\ip{A_i}{X X^*}, y_i) \ip{A_i}{X v u^* + u(X v)^*}^2.
		\end{aligned}
	\end{equation}
	As noted in \cite[Sec.~3]{McRae2026a}, we have, because $A_i \succeq 0$,
	\[
		\ip{A_i}{X v u^* + u (X v)^*}^2
		\leq 4 \ip{A_i}{X v v^* X^*} \ip{A_i}{u u^*}.
	\]
	Plugging this into \eqref{eq:rank1_hess} and summing over $v$ in an orthonormal basis of $\F^p$ gives
	\begin{equation}
		\label{eq:ineq_real}
		\begin{aligned}
			0 &\leq p \ip{\nabla L_\lambda(X X^*)}{u u^*} \\
			&\quad+ \frac{2}{n} \sum_{i=1}^n \ell''(\ip{A_i}{X X^*}, y_i) \ip{A_i}{X X^*} \ip{A_i}{u u^*}.
		\end{aligned}
	\end{equation}
	We cannot, in general, do better in the real case ($\F = \R$).
	In the complex case ($\F = \C$), we can improve the constant.
	Write $A_i = B_i B_i^*$ for some matrix $B_i$.
	Then, by the fact that for $z \in \C$, $2 \real(z)^2 = \abs{z}^2 + \real(z^2)$,
	we have
	\begin{align*}
		&\ip{A_i}{X v z^* + z (X v)^*}^2 \\
		&\quad= 4 ( \real(\ip{B_i^* z}{B_i^* X v}))^2 \\
		&\quad= 2 \abs{\ip{B_i^* z}{B_i^* X v}}^2 + 2 \real( \ip{B_i^* z}{B_i^* X v}^2 ) \\
		&\quad\leq 2 \ip{A_i}{X v v^* X^*} \ip{A_i}{z z^*} + 2 \real( \ip{B_i^* z}{B_i^* X v}^2 ).
	\end{align*}
	Multiplying $v$ by the imaginary unit $i$ negates the second term in this last expression but leaves the first term unchanged.
	Then, if $\{v_k\}_{k=1}^p$ is an orthonormal basis of $\C^p$,
	plugging the above inequality into \eqref{eq:rank1_hess} and summing over all $v \in \{v_k, i v_k\}_{k=1}^p$
	gives
	\begin{equation}
		\label{eq:ineq_cplx}
		\begin{aligned}
		0 &\leq 2p \ip{ \nabla L_\lambda(X X^*)}{u u^*} \\
		&\quad+ \frac{2}{n} \sum_{i=1}^n \ell''(\ip{A_i}{X X^*}, y_i) \ip{A_i}{X X^*} \ip{A_i}{u u^*}.
		\end{aligned}
	\end{equation}
	If we write $Z' = \sum_m u_m u_m^*$,
	summing the inequality \eqref{eq:ineq_real} (in the real case) or \eqref{eq:ineq_cplx} (in the complex case) with $u = u_m$ over $m$ gives
	\begin{align*}
		0 &\leq \constF p \ip{ \nabla L_\lambda(X X^*)}{Z'} \\
		&\quad+ \frac{2}{n} \sum_{i=1}^n \ell''(\ip{A_i}{X X^*}, y_i) \ip{A_i}{X X^*} \ip{A_i}{Z'}.
	\end{align*}
	The result immediately follows.
\end{proof}

\section{Nonconvex PhaseCut and its landscape}
\label{sec:phasecut}
In this section, we develop and analyze a smooth reformulation of the nonsmooth problem \eqref{eq:opt_ncvx_sqrt} (which is a regularized version of \eqref{eq:unreg_gen_sqrt})
as a smooth \emph{synchronization} problem over spheres.
This is a nonconvex variant of the PhaseCut approach of \cite{Waldspurger2015}.
This formulation allows us to avoid any issues with nonsmoothness,
and it links our problem to the deep mathematical theory of synchronization problems.

We only consider the case of rank-1 measurements, that is, $A_i = f_i f_i^*$ for $f_1, \dots, f_n \in \F^d$.
The concepts can be extended to more general (higher-rank) PSD measurement matrices,
but the notation and analysis become far more complicated, so we omit this generalization.

Recall the notation for this case from \Cref{sec:intro}.
In the case of ordinary phase retrieval where $Z_* = x_* x_*^*$ for some vector $x_* \in \F^d$,
we have
\begin{equation*}
	y = \alpha(x_*) + \varepsilon = \abs{F x_*} + \varepsilon,
\end{equation*}
where, again,
\[
	F = \begin{bmatrix*} f_1^* \\ \vdots \\ f_n^* \end{bmatrix*}.
\]
The least-squares estimator is \eqref{eq:unreg_pr_sqrt} (which is also \eqref{eq:opt_ncvx_sqrt} with $p = 1$ and $\lambda = 0$), which we can write compactly as
\begin{equation}
	\label{eq:ls_simp}
	\min_{x \in \F^d}~\frac{1}{n} \norm{\abs{F x} - y}^2.
\end{equation}
The PhaseCut formulation comes from the observation that,
for any $a \in \F$ and $b \geq 0$,
\[
	(\abs{a} - b)^2 = \min_{s \in \F_1}~\abs{a - s b}^2,
\]
where $\F_1$ is the unit sphere in $\F$ ($\{\pm 1\}$ in $\R$ or the unit circle in $\C$).
The optimum value of $s \in \F_1$ is simply $s = \frac{a}{\abs{a}}$ if $a \neq 0$ ($s$ can be arbitrary if $a = 0$).

Thus we can rewrite the objective in the least-squares problem \eqref{eq:ls_simp} as
\begin{align*}
	\frac{1}{n} \norm{\abs{F x} - y}^2
	&= \min_{u \in \F_1^n}~\frac{1}{n} \norm{F x - \diag(y) u}^2
\end{align*}
(recall that $\diag(y) \in \R^{n \times n}$ is the diagonal matrix whose diagonal elements are the elements of $y \in \R^n$).
For fixed $u \in \F_1^n$,
a closed-form minimizer of $\norm{F x - \diag(y) u}^2$ over $x \in \F^d$ is $x = F^\dagger \diag(y) u$,
where $F^\dagger$ is the Moore-Penrose pseudoinverse of $F$.
We then have, using the fact that $F F^\dagger$ and $I_n - F F^\dagger$ are orthogonal projection matrices,
\begin{align*}
	&\min_{x \in \F^{d}}~\frac{1}{n} \norm{F x - \diag(y) u}^2 \\
	&\quad= \frac{1}{n} \norm{ F F^\dagger \diag(y) u - \diag(y) u }^2 \\
	&\quad= \frac{1}{n} \norm{(I_n - F F^\dagger) \diag(y) u}^2 \\
	&\quad= \frac{1}{n} \ip{\diag(y) (I_n - F F^\dagger) \diag(y)}{u u^*}.
\end{align*}
The resulting minimization problem over $u \in \F_1^n$ has a structure resembling max-cut (from which the name ``PhaseCut'' comes) or $\Z_2$/angular synchronization.
The paper \cite{Waldspurger2015} studies a semidefinite relaxation of this optimization problem.

We generalize this to include regularization and larger ranks of the ground truth matrices and optimization variable.
The same reformulation (without regularization) was used in the work \cite{Halsted2022} on sensor network localization, which is a particular instance of our problem.

For $X_* \in \F^{d \times \gtrank}$,
the model \eqref{eq:gen_model_sqrt} becomes
\begin{equation}
	\label{eq:phasecut_model}
	y = \abs{F X_*} + \varepsilon.
\end{equation}
We will consider a PhaseCut-like reformulation of the nonconvex problem \eqref{eq:opt_ncvx_sqrt}.
Note that, similarly to the scalar case, we have, for all $v \in \F^p$ and $b \geq 0$,
\[
	(\norm{v} - b)^2 = \min_{u \in \SFp}~\norm{v - b u}^2,
\]
where $\SFp$ is the unit sphere in $\F^p$.
For any $X \in \F^{d \times p}$,
we can apply this to each row of $F X \in \F^{n \times p}$ to obtain
\begin{align*}
	\norm{\alpha(X) - y}^2
	&= \norm{\abs{F X} - y}^2 \\
	&= \min_{U \in \F^{n \times p}}~\norm{F X - \diag(y) U}^2 \\
	&\qquad \st \diag(U U^*) = \ones.
\end{align*}
The diagonal constraint on $U U^*$ compactly represents the requirement that each row of $U$ have unit norm.

We can plug this into the problem \eqref{eq:opt_ncvx_sqrt}
and reverse the order of optimization.
For fixed $U$, the problem
\begin{equation}
	\label{eq:phasecut_ridge}
	\min_{X \in \F^{d \times p}}~\frac{1}{n} \norm{F X - \diag(y) U}^2 + \lambda \norm{X}^2
\end{equation}
is simply multivariate ridge regression.
This has the closed-form solution (unique if $\lambda > 0$ or $F$ has full row rank)
\begin{align*}
	X &= (n \lambda I_d + F^* F)^{-1} F^* \diag(y) U \\
	&= F^*(n \lambda I_n + F F^*)^{-1} \diag(y) U.
\end{align*}
If $\lambda = 0$, we can take, in a limiting sense, $X = F^\dagger \diag(y) U$ similarly to before.

As the optimal $X$ is linear in $U$,
the minimum value of \eqref{eq:phasecut_ridge} is quadratic in $U$.
We show below in \Cref{sec:phasecut_calcs} that
\begin{equation*}
	\min_{X \in \F^{d \times p}}~\frac{1}{n} \norm{F X - \diag(y) U}^2 + \lambda \norm{X}^2
	= \ip{M_\lambda}{U U^*},
\end{equation*}
where
\begin{equation*}
	M_\lambda \coloneqq \lambda \diag(y) (n \lambda I_n + F F^*)^{-1} \diag(y).
\end{equation*}

We can thus write the reformulated, constrained problem over $U$ as
\begin{equation}
	\label{eq:phasecut_gen}
	\begin{gathered}
	\min_{U \in \F^{n \times p}}~\ip{M_\lambda}{U U^*} \st \diag(U U^*) = \ones, \quad \text{where} \\
	M_\lambda = \lambda \diag(y) (n \lambda I_n + F F^*)^{-1} \diag(y).
	\end{gathered}
\end{equation}
Similarly to before, in the unregularized case $\lambda = 0$, we can take
\[
	M_0 = \lim_{\lambda \to 0^+} M_\lambda = \frac{1}{n} \diag(y) (I_n - F F^\dagger) \diag(y).
\]

We now consider the nonconvex landscape of \eqref{eq:phasecut_gen}.
The feasible set of \eqref{eq:phasecut_gen} is a Riemannian manifold (a product of spheres); apart from the (discrete) case $p = 1$ and $\F = \R$,
the problem is smooth.
A feasible point $U$ is a second-order critical point if the Riemannian gradient of the objective is zero and the Riemannian Hessian is positive semidefinite at $U$ (see \Cref{sec:phasecut_proof} for more details).
Our landscape result for such second-order critical points looks similar to \Cref{thm:landscape_sqrt}:
\begin{theorem}
	\label{thm:landscape_phasecut}
	Let the search rank $p$ satisfy
	\begin{itemize}
		\item $p \geq 2$ in the real case $\F = \R$, or
		\item $p \geq 1$ in the complex case $\F = \C$.
	\end{itemize}
	Under the model \eqref{eq:phasecut_model} for $X_* \in \F^{d \times r}$,
	every second-order critical point $U$ of \eqref{eq:phasecut_gen},
	satisfies, with $X = (n \lambda I_d + F^* F)^{-1} F^* \diag(y) U$, for all $R \in \F^{p \times \gtrank}$,
	\begin{equation}
		\label{eq:thm_phasecut_mainbd}
		\begin{aligned}
			&\frac{1}{n} \norm{\alpha(X) - \alpha(X_*)}^2 \\
			&\quad\leq \frac{2}{n} \ip{\varepsilon}{\alpha(X) - \alpha(X_*)} + \lambda (\norm{X_*}^2 - \norm{X}^2) \\
			&\qquad + \frac{1}{\constF p - 1} \parens*{ \frac{1}{n} \norm{F(X_\lambda - X R)}^2 + \lambda \norm{X_\lambda - X R}^2 },
		\end{aligned}
	\end{equation}
	where $\constF$ is defined in \Cref{lem:landscape_gen},
	and $X_\lambda \in \F^{d \times \gtrank}$ satisfies
	\begin{equation}
		\label{eq:Xlam_errbd}
		\frac{1}{n} \norm{F(X_\lambda - X_*)}^2 + \lambda \norm{X_\lambda - X_*}^2
		\leq  \parens*{\sqrt{\lambda} \norm{x_*} + \frac{\norm{\varepsilon}}{\sqrt{n}}}^2.
	\end{equation}
\end{theorem}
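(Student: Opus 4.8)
The plan is to deduce \Cref{thm:landscape_phasecut} from the already-proved \Cref{thm:landscape_sqrt} by showing that every second-order critical point $U$ of the PhaseCut problem \eqref{eq:phasecut_gen} corresponds, via the ridge map $U \mapsto X = (n\lambda I_d + F^*F)^{-1}F^*\diag(y)U$, to a second-order critical point $X$ of the amplitude problem \eqref{eq:opt_ncvx_sqrt} (with $A_i = f_i f_i^*$). The link between the two problems is that each is a partial minimization of the single \emph{smooth} function
\[
	\Phi(X',U') \coloneqq \tfrac1n\norm{FX' - \diag(y)U'}^2 + \lambda\norm{X'}^2
\]
over $\F^{d\times p}\times\mathcal{M}$, where $\mathcal{M} \coloneqq \{U' \in \F^{n\times p} : \diag(U'U'^*) = \ones\}$ is a product of spheres: minimizing out $X'$ gives the objective $g(U') \coloneqq \ip{M_\lambda}{U'U'^*}$ of \eqref{eq:phasecut_gen} (this is the computation performed in deriving \eqref{eq:phasecut_gen}), and minimizing out $U'\in\mathcal{M}$ gives the objective $h$ of \eqref{eq:opt_ncvx_sqrt}.

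The reduction has two steps. \emph{First-order and a phase identity.} Riemannian stationarity of $U$ for $g$ forces each row $(M_\lambda U)_i$ to equal $\rho_i u_i$ for some real scalar $\rho_i$; rewriting this via $M_\lambda U = \tfrac1n \diag(y)(\diag(y)U - FX)$ gives $(FX)_i = (y_i - n\rho_i/y_i)\,u_i$ when $y_i > 0$. Testing the Riemannian Hessian of $g$ against a tangent direction supported on the single row $i$ (which exists since $p\geq2$ when $\F=\R$) yields $\rho_i \leq (M_\lambda)_{ii}$, and since $(M_\lambda)_{ii} = \lambda y_i^2 [(n\lambda I_n + FF^*)^{-1}]_{ii} \leq y_i^2/n$ (because $(n\lambda I_n + FF^*)^{-1} \preceq \tfrac1{n\lambda}I_n$), the coefficient $y_i - n\rho_i/y_i$ is nonnegative. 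Hence $u_i$ is precisely the unit phase of $(FX)_i$, so $\Phi(X,U)$ equals $h(X)$, and $h$ is smooth near $X$ with $\nabla h(X)=0$ (rows with $(FX)_i = 0$ and $y_i>0$ would force $f_i = 0$ and are excluded; rows with $y_i = 0$ do not cause nonsmoothness since there $\ell(b,0) = b$). \emph{Second-order.} Since $\Phi$ is strongly convex in $X'$, a standard partial-minimization (Schur-complement) argument gives $\nabla^2 h(X)[\dot X] = \min_{\dot U\in T_U\mathcal{M}} \operatorname{Hess}\Phi[(\dot X, \dot U),(\dot X, \dot U)]$, where the Riemannian Hessian of $\Phi$ equals the ambient Hessian $\tfrac2n\norm{F\dot X - \diag(y)\dot U}^2 + 2\lambda\norm{\dot X}^2$ minus the sphere-curvature term $2\sum_i \rho_i\norm{\dot u_i}^2$; meanwhile second-order criticality of $U$ for $g = \min_{X'}\Phi(X',\cdot)$ is exactly the assertion that $\operatorname{Hess}\Phi[(\dot X, \dot U),(\dot X, \dot U)] \geq 0$ for \emph{all} $\dot X$ and all tangent $\dot U$. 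Therefore $\nabla^2 h(X) \succeq 0$, i.e.\ $X$ is a second-order critical point of \eqref{eq:opt_ncvx_sqrt}.

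Now \Cref{thm:landscape_sqrt} applied to $X$ (which has $p$ columns, and the hypothesis $p\geq2$ when $\F=\R$ matches) gives, using $\norm{\alpha(V)}^2 = \norm{FV}^2$, exactly the bound \eqref{eq:thm_phasecut_mainbd} with $X_*$ in place of $X_\lambda$ throughout; the stated form follows upon letting $X_\lambda$ be the ridge solution $(n\lambda I_d + F^*F)^{-1}F^*\diag(y)U_*$ attached to the feasible reference point $U_* \in \mathcal{M}$ whose $i$-th row is the unit phase of $(FX_*)_i$, so that $X_\lambda - XR = (n\lambda I_d + F^*F)^{-1}F^*\diag(y)(U_* - UR)$ (one could even take $X_\lambda = X_*$). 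The estimate \eqref{eq:Xlam_errbd} is then a routine basic inequality: $X_\lambda$ minimizes $\psi(X') \coloneqq \tfrac1n\norm{FX' - \diag(y)U_*}^2 + \lambda\norm{X'}^2$, whose strong convexity gives $\psi(X') - \psi(X_\lambda) = \tfrac1n\norm{F(X' - X_\lambda)}^2 + \lambda\norm{X' - X_\lambda}^2$; taking $X' = X_*$, using $\psi(X_*) = \tfrac1n\norm{\varepsilon}^2 + \lambda\norm{X_*}^2$ (by \eqref{eq:phasecut_model}, since $\diag(y)U_* = FX_* + \diag(\varepsilon)U_*$ with $\norm{\diag(\varepsilon)U_*} = \norm{\varepsilon}$), and discarding $\psi(X_\lambda)\geq 0$ yields \eqref{eq:Xlam_errbd}.

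The delicate point is the second-order transfer in the presence of the nonsmoothness of \eqref{eq:opt_ncvx_sqrt}: one must check that $h$ is genuinely $C^2$ near $X$ (which comes down, as above, to excluding rows with $(FX)_i = 0$ and $y_i > 0$) and that the inner minimization $U'\mapsto\Phi(X',U')$ over $\mathcal{M}$ is non-degenerate at its minimizer on the active rows — there its Riemannian Hessian is $2y_i\norm{(FX')_i}$ times the identity on the tangent space, hence positive definite when $y_i > 0$ and $(FX')_i \neq 0$ — so that the Schur-complement formula for $\nabla^2 h(X)$ is valid; the rows with $y_i = 0$ do not enter $\Phi$ and are harmless. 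A self-contained alternative, bypassing the reduction, is to argue directly on $\mathcal{M}$, mirroring the proof of \Cref{lem:landscape_gen}: feed into the Riemannian Hessian inequality for $g$ the tangent perturbations whose $i$-th row is a fixed $v \in \F^p$ projected onto the tangent space at $u_i$, sum over an orthonormal basis of $\F^p$ (doubled in the complex case to recover $\constC = 2$) — this produces the denominator $\constF p - 1$ — and then finish exactly as in the proof of \Cref{thm:landscape_sqrt} with $Z' = (U_* - UR)(U_* - UR)^*$ pushed through the ridge map; in that route the bookkeeping of the tangency constraints and the curvature terms is where the work concentrates.
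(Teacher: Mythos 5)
Your argument is correct in its conclusions but takes a genuinely different route from the paper. The paper works entirely on the product-of-spheres manifold: it proves \Cref{lem:phasecut_socp} (second-order conditions tested against the projected directions $\Ptansp(zv^*)$ summed over a basis, which is where $\constF p - 1$ comes from), then combines the resulting inequality with $M_\lambda \preceq \frac1n \diag(y)^2$, the identity \eqref{eq:quad_id}, and the explicit ridge formula \eqref{eq:Xlambda_def} for $X_\lambda = \recmap U_*$ — which is why $X_\lambda$, not $X_*$, appears in \eqref{eq:thm_phasecut_mainbd}. You instead transfer second-order criticality from the PhaseCut problem to the amplitude problem \eqref{eq:opt_ncvx_sqrt} through the joint function $\Phi$ and then invoke \Cref{thm:landscape_sqrt}. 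The skeleton is sound: the one-sided envelope inequality $g \leq \Phi(X', \cdot)$ with equality and matching (zero) first derivatives at $(X,U)$ gives $\operatorname{Hess} g[\dot U] \leq \operatorname{Hess}\Phi[(\dot X,\dot U)]$ for every $\dot X$, hence $\operatorname{Hess}\Phi \succeq 0$ on the whole product tangent space; and since $h(X+t\dot X) = \Phi(X + t\dot X, U'(t))$ along the smooth minimizer path through $U$, expanding $\Phi$ along that path gives $h(X+t\dot X) - h(X) \geq o(t^2)$ for every $\dot X$, which is exactly the criticality notion fed into \Cref{lem:socp_char}. Your route, once fleshed out, actually yields the cleaner bound with $X_*$ in place of $X_\lambda$ (so one may take $X_\lambda = X_*$ and \eqref{eq:Xlam_errbd} holds trivially), which is strictly stronger than the stated theorem; what the paper's direct proof buys is that it never has to discuss smoothness of $h$ or minimizer selections at all. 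Your verification of \eqref{eq:Xlam_errbd} for $X_\lambda = \recmap U_*$ via the Pythagorean identity for the quadratic $\psi$ is also correct and slicker than the paper's resolvent computation.

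Three points need tightening. First, the parenthetical ``rows with $(FX)_i = 0$ and $y_i > 0$ would force $f_i = 0$'' is true but not for a one-line reason: it requires chaining $\rho_i = y_i^2/n$ (from stationarity when $(FX)_i = 0$) with $\rho_i \leq (M_\lambda)_{ii} \leq y_i^2/n$ and then analyzing the equality case of the last inequality, namely $e_i^*(n\lambda I_n + FF^*)^{-1}e_i = \frac{1}{n\lambda}$ iff $f_i = 0$ (and the analogous statement for $M_0$); you have the ingredients on the page but the equality-case step must be made explicit, and you should then say why $f_i = 0$ is harmless (the $i$-th term of $h$ is the constant $y_i^2$, so no nonsmoothness arises) rather than ``excluded.'' Second, ``$\Phi$ is strongly convex in $X'$'' fails when $\lambda = 0$ and $F$ is column-rank deficient, a case the theorem permits; fortunately only the one-sided touching inequality is needed for the direction of the Schur-complement argument you actually use, so rephrase accordingly. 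Third, the final bookkeeping is muddled: having proved \eqref{eq:thm_phasecut_mainbd} with $X_*$, you should simply set $X_\lambda = X_*$; substituting $\recmap U_*$ for $X_\lambda$ is not implied by the $X_*$ bound (they are different matrices), though either choice satisfies the theorem as stated.
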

The guarantee is identical to that of \Cref{thm:landscape_sqrt} apart from the presence of $X_\lambda$.
$X_\lambda$ is the solution to \eqref{eq:phasecut_ridge} given perfect knowledge of the ``true'' measurement directions; see \eqref{eq:Xlambda_def} below.
The bound on $X_\lambda - X_*$ in \eqref{eq:Xlam_errbd} is sufficient for our purposes in this paper but is not necessarily tight.

We explore some statistical consequences of \Cref{thm:landscape_phasecut} in \Cref{sec:stats}.
As discussed in \Cref{sec:phasecut_calcs} below, if $U$ were a global optimum of \eqref{eq:phasecut_gen}, we could drop the last term in \eqref{eq:thm_phasecut_mainbd} and thus obtain an identical guarantee as for global optima of \eqref{eq:opt_ncvx_sqrt} (see \Cref{thm:landscape_sqrt} and following discussion).

In the following subsections, we fill in some important details about this problem,
finishing with a full proof of \Cref{thm:landscape_phasecut}.
\subsection{Preliminary calculations}
\label{sec:phasecut_calcs}
First, we review some basic properties of ridge regression that are used above and/or will be useful for the proof of \Cref{thm:landscape_phasecut}.

The objective of the multivariate ridge regression problem \eqref{eq:phasecut_ridge} is convex and quadratic, and the global minimum satisfies
\begin{align*}
	0 &= \nabla_X \parens*{ \frac{1}{2n} \norm{F X - \diag(y) U}^2 + \frac{\lambda}{2} \norm{X}^2 } \\
	&= \frac{1}{n} F^* (F X - \diag(y) U) + \lambda X.
\end{align*}
We will write the closed-form solution as
\[
	X = \recmap U,
\]
where
\begin{align*}
	\recmap &\coloneqq (n\lambda I_d + F^* F)^{-1} F^* \diag(y) \\
	& = F^*(n\lambda I_n + F F^*)^{-1} \diag(y).
\end{align*}
Note that, by construction of $\recmap$, the zero-gradient--like condition
\[
	0 = \frac{1}{n} F^*(F \recmap W - \diag(y) W) + \lambda \recmap W
\]
holds for any $W \in \F^{n \times p}$.
This gives the identity
\begin{align*}
	&\frac{1}{n} \norm{F \recmap W - \diag(y) W}^2 + \lambda \norm{\recmap W}^2 \\
	&\quad= \frac{1}{n} ( \norm{\diag(y) W}^2 + 2 \underbrace{\ip{F \recmap W}{F \recmap W - \diag(y) W}}_{= - n\lambda \norm{\recmap W}^2} \\
	&\qquad  - \norm{F \recmap W}^2 ) + \lambda \norm{\recmap W}^2 \\
	&= \frac{1}{n} ( \norm{\diag(y) W}^2 - \norm{F \recmap W}^2) - \lambda \norm{\recmap W}^2 \\
	&= \ip{M_\lambda W}{W},
\end{align*}
where, recalling the definition of $\recmap$,
\begin{align*}
	M_\lambda
	&\coloneqq \frac{1}{n} (\diag(y)^2 - \recmap^* (F^* F + n \lambda I_n) \recmap) \\
	&= \frac{1}{n} \diag(y)( I_n - F F^* (n\lambda I_n + F F^*)^{-1}) \diag(y) \\
	&= \lambda\diag(y) (n\lambda I_n + F F^*)^{-1} \diag(y).
\end{align*}
The intermediate identity
\begin{equation}
	\label{eq:quad_id}
	\ip{M_\lambda W}{W} = \frac{1}{n} \norm{\diag(y) W}^2 - \frac{1}{n}  \norm{F \recmap W}^2 - \lambda \norm{\recmap W}^2,
\end{equation}
which holds for all $W \in \F^{n \times p}$, will later prove useful.

Let $U_* \in \F^{n \times \gtrank}$ be a matrix of ``ground truth'' directions of $F X_*$, that is, a\footnote{The choice is not necessarily unique if $(F X_*)_i = 0$ for some $i$.} matrix with unit-norm rows satisfying
\begin{align*}
	(F X_*)_i &= \norm{(F X_*)_i}(U_*)_i \quad \forall i, \quad \text{or} \\
	F X_* &= \diag(\abs{F X_*}) U_*.
\end{align*}
The matrix $X_\lambda$ in \Cref{thm:landscape_phasecut} is the optimum of \eqref{eq:phasecut_ridge} (with $p = r$) given $U = U_*$:
\begin{equation}
	\label{eq:Xlambda_def}
	\begin{aligned}
		X_\lambda
		& \coloneqq \recmap U_* \\
		&= \frac{1}{n} \parens*{ \lambda I_d + \frac{1}{n} F^* F }^{-1} F^*( F X_* + \diag(\varepsilon) U_* ) \\
		&= X_* + \parens*{ \lambda I_d + \frac{1}{n} F^* F }^{-1} \parens*{ -\lambda X_* + \frac{1}{n} F^* \diag(\varepsilon) U_* }.
	\end{aligned}
\end{equation}
Our analysis is somewhat complicated by the fact that we do not obtain $X_*$ exactly.
The error term in this last expression is standard from (multivariate) ridge regression.
As $U_*$ is not necessarily uniquely defined, neither is $X_\lambda$.
However, this makes no difference to our analysis in this paper.

We now consider some properties of optima of \eqref{eq:phasecut_gen}.
We do this in several steps.
First, we can estimate
\begin{align*}
	\ip{M_\lambda}{U_* U_*^*}
	&= \frac{1}{n} \norm{F X_\lambda - \diag(y) U_*}^2 + \lambda \norm{X_\lambda}^2 \\
	&\leq \frac{1}{n} \norm{F X_* - \diag(y) U_*}^2 + \lambda \norm{X_*}^2 \\
	&= \frac{\norm{\varepsilon}^2}{n} + \lambda \norm{X_*}^2.
\end{align*}
This upper bound (which follows from the fact that $X_\lambda$ minimizes the previous expression over all $X \in \F^{d \times \gtrank}$)
will prove simpler to use than the exact expression.

Next, for any feasible $U$ of \eqref{eq:phasecut_gen},
for $X = \recmap U$,
we have
\begin{align*}
	\ip{M_\lambda}{U U^*}
	&= \frac{1}{n} \norm{F X - \diag(y) U}^2 + \lambda \norm{X}^2 \\
	&\geq \frac{1}{n} \norm{\abs{F X} - y}^2 + \lambda \norm{X}^2 \\
	&= \frac{1}{n} \norm{\alpha(X) - \alpha(X_*) - \varepsilon}^2 + \lambda \norm{X}^2 \\
	&= \frac{1}{n} (\norm{\varepsilon}^2 + \norm{\alpha(X) - \alpha(X_*)}^2 \\
	&\quad - 2 \ip{\varepsilon}{\alpha(X) - \alpha(X_*)}) + \lambda \norm{X}^2.
\end{align*}
Combining these previous two inequalities, we obtain
\begin{equation}
	\label{eq:phasecut_basicdiff}
	\begin{aligned}
		&\ip{M_\lambda}{U U^* - U_* U_*^*} \\
		&\quad \geq \frac{1}{n} \norm{\alpha(X) - \alpha(X_*)}^2 - \frac{2}{n} \ip{\varepsilon}{\alpha(X) - \alpha(X_*)} \\
		&\qquad + \lambda (\norm{X}^2 - \norm{X_*}^2).
	\end{aligned}
\end{equation}
However, if $U$ is globally optimal\footnote{We could find a global optimum, for example, by taking $p = n$ and using semidefinite programming; the original PhaseCut paper \cite{Waldspurger2015} only considered such semidefinite relaxations.} for \eqref{eq:phasecut_gen} (with $p \geq \gtrank$),
the left-hand-side of \eqref{eq:phasecut_basicdiff} must be $\leq 0$,
so we obtain
\[
	\frac{1}{n} \norm{\alpha(X) - \alpha(X_*)}^2
	\leq \frac{2}{n} \ip{\varepsilon}{\alpha(X) - \alpha(X_*)} + \lambda (\norm{X_*}^2 - \norm{X}^2).
\]
Comparing to \Cref{thm:landscape_sqrt},
this is precisely the same inequality we would obtain from a global optimum (or semidefinite relaxation) of \eqref{eq:opt_ncvx_sqrt}.
The additional term in \Cref{thm:landscape_phasecut} is similarly the price we pay for only having a second-order critical point of the nonconvex problem \eqref{eq:phasecut_gen}.

\subsection{Nonconvex landscape proof}
\label{sec:phasecut_proof}
Finally, we analyze the nonconvex landscape of \eqref{eq:phasecut_gen}.
As quadratically constrained quadratic problems of this particular form are by now well-studied and are not the main focus of this paper, we omit much of the context and details.
See, for example, \cite{Endor2026,McRae2025preprinta,Boumal2019} for further details and references.
The following lemma provides the properties we will need of second-order critical points of \eqref{eq:phasecut_gen}.
The operator $\ddiag$ on $n \times n$ matrices extracts the diagonal, setting off-diagonal elements to zero.
We write the Hadamard (elementwise) product between two matrices $A$ and $B$ of equal size as $A \circ B$, and we denote the Hadamard square by $A^{\circ 2} = A \circ A$.
\begin{lemma}
	\label{lem:phasecut_socp}
	Let $M \in \F^{n \times n}$ be Hermitian.
	Every second-order critical point $U$ of the problem
	\[
		\min_{U \in \F^{n \times p}}~\ip{M}{U U^*} \st \diag(U U^*) = \ones
	\]
	satisfies the following properties,
	where
	\[
		S(U) \coloneqq M - \real(\ddiag(M U U^*)).
	\]
	\begin{itemize}
		\item $S(U) U = 0$, and,
		\item For all $z \in \F^n$,
		\begin{equation}
			\label{eq:ineq_phasecut_socp}
			\ip{S(U)}{(\constF p - 2) z z^* + \real(D_z^* U U^* D_z) \circ (U U^*)} \geq 0,
		\end{equation}
		where $D_z = \diag(z)$.
	\end{itemize}
\end{lemma}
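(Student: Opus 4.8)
The plan is to treat the feasible set $\{U\in\F^{n\times p}:\diag(UU^*)=\ones\}$ as a Riemannian product of $n$ unit spheres in $\F^p$ (smooth except in the degenerate case $\F=\R$, $p=1$, where the set is discrete, the tangent space is trivial, and the asserted inequality holds vacuously), and to read the two properties off the Riemannian gradient and Hessian of the quadratic $U\mapsto\ip{M}{UU^*}=\tr(U^*MU)$, using the standard machinery for such problems (see \cite{Boumal2019,Endor2024,McRae2025preprinta}). Writing $u_1,\dots,u_n\in\F^p$ for the (conjugate transposes of the) rows of $U$, so that $\norm{u_i}=1$, the tangent space at $U$ is $\{\dot U:\real\ddiag(\dot U U^*)=0\}$, i.e.\ each row of $\dot U$ is real-orthogonal to $u_i$. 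The Euclidean gradient of $\ip{M}{UU^*}$ is $2MU$; projecting onto the tangent space subtracts exactly $2\real(\ddiag(MUU^*))U$, so the Riemannian gradient is $2S(U)U$, and its vanishing is the first bullet, $S(U)U=0$.

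For the second bullet, I would compute the Riemannian Hessian quadratic form. Since geodesics on a product of spheres are products of great-circle arcs, the ambient quadratic form $\dot U\mapsto 2\ip{M}{\dot U\dot U^*}$ gets corrected by the usual sphere-curvature term, which on the $i$-th factor contributes $-2\,\real((MUU^*)_{ii})\norm{\dot u_i}^2$; summing over $i$ and recombining, the Riemannian Hessian quadratic form at $U$ in a tangent direction $\dot U$ equals $2\ip{S(U)}{\dot U\dot U^*}$. Hence second-order criticality is precisely the statement that $\ip{S(U)}{\dot U\dot U^*}\geq 0$ for every tangent $\dot U$. Note this uses only the Hessian condition, so the two bullets are proved independently.

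It then remains to feed this Hessian inequality the right tangent directions. For $z\in\F^n$ and a unit vector $c\in\F^p$, I would take $\dot U^{(c)}$ to be the matrix whose $i$-th row is the tangential projection of $z_i c$, namely $z_i c$ minus the \emph{real} multiple $\real(z_i\langle c,u_i\rangle)\,u_i$ of $u_i$; one checks immediately that this row is real-orthogonal to $u_i$, so $\dot U^{(c)}$ is tangent for every $z\in\F^n$. Plugging this into the Hessian inequality, averaging over $c$ uniform on the unit sphere of $\F^p$ using $\mathbb{E}_c[cc^*]=I_p/p$ and (only relevant, and only needed, when $\F=\C$) $\mathbb{E}_c[cc^\top]=0$, and finally multiplying through by $\constF p$, I expect $\ip{S(U)}{\mathbb{E}_c[\dot U^{(c)}(\dot U^{(c)})^*]}\geq 0$ to collapse to exactly $\ip{S(U)}{(\constF p-2)\,zz^*+\real(D_z^*UU^*D_z)\circ(UU^*)}\geq 0$, which is \eqref{eq:ineq_phasecut_socp}. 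In the real case ($\constF=1$) this is the same computation; the reason one still obtains $\constF p=2p$, not $p$, when $\F=\C$ is that the real part in the tangential projection halves the relevant second moments of $c$, so one rescales by $2p$ — a bookkeeping gain analogous to the complex improvement in the proof of \Cref{lem:landscape_gen}.

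The step I expect to be the main obstacle is exactly this complex-case expansion: one must choose the tangential correction to be a real (not complex-orthogonal) multiple of $u_i$ so that $\dot U^{(c)}$ is genuinely tangent for arbitrary $z\in\C^n$, and then carry the $\real(\cdot)$ operations and the vanishing of the $\mathbb{E}_c[cc^\top]$-type moments through the expansion carefully enough that the answer lands on $\real(D_z^*UU^*D_z)\circ(UU^*)$ with the stated constant, rather than on the superficially cleaner but genuinely different matrix $zz^*\circ(UU^*)\circ\overline{(UU^*)}$. The first bullet, the Hessian quadratic-form computation, and the real case are routine once the sphere-curvature term is handled correctly.
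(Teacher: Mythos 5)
Your proposal is correct and follows essentially the same approach as the paper: identify the Riemannian gradient (giving $S(U)U=0$) and Hessian (giving $\ip{S(U)}{\dot U\dot U^*}\geq 0$ on the tangent space), then feed the rank-one tangential projections $\dot U^{(c)}=\Ptansp(zc^*)$ into the Hessian inequality and combine over $c$. The only difference is cosmetic: you average over $c$ uniform on the unit sphere of $\F^p$ and invoke $\mathbb{E}_c[cc^*]=I_p/p$ together with $\mathbb{E}_c[cc^\top]=0$ (complex case), whereas the paper sums over an explicit orthonormal basis $\{v_k\}$ in the real case and over $\{v_k, i v_k\}$ in the complex case---these encode exactly the same two moment identities, so the two computations are the same after the overall rescaling by $\constF p$.
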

This is a slight generalization of intermediate results in \cite{McRae2025preprinta};
we provide a proof at the end of this section.
With this and the calculations developed in \Cref{sec:phasecut_calcs} above,
we can prove the main landscape result.
\begin{proof}[Proof of \Cref{thm:landscape_phasecut}]
	Note that, for $z \in \F^n$, we can rewrite the inequality \eqref{eq:ineq_phasecut_socp} from \Cref{lem:socp_char} (with $M = M_\lambda$) as
	\begin{align*}
		0 &\leq (\constF p - 1) \ip{S(U)}{z z^*} \\
		&\quad + \ip{S(U)}{\real(D_z^* U U^* D_z) \circ (U U^*) - z z^*} \\
		&= (\constF p - 1) \ip{S(U)}{z z^*} \\
		&\quad + \ip{M_\lambda}{\real(D_z^* U U^* D_z) \circ (U U^*) - z z^*} \\
		&\quad - \underbrace{\ip{\real(\ddiag(M_\lambda U U^*))}{\real(D_z^* U U^* D_z) \circ (U U^*) - z z^*}}_{= 0} \\
		&= (\constF p - 1) \ip{S(U)}{z z^*} \\
		&\quad + \ip{M_\lambda}{\real(D_z^* U U^* D_z) \circ (U U^*) - z z^*}.
	\end{align*}
	The third inner product on the middle line is zero because the matrix on the right-hand side has zero diagonal.
	
	As $M_\lambda \preceq \frac{1}{n}\diag(y)^2$,
	\begin{align*}
		&\ip{M_\lambda}{\real(D_z^* U U^* D_z) \circ (U U^*)} \\
		&\quad \leq \frac{1}{n} \tr( \diag(y) (\real(D_z^* U U^* D_z) \circ (U U^*)) \diag(y) ) \\
		&\quad= \frac{1}{n} \norm{\diag(y) z}^2.
	\end{align*}
	Furthermore, by \eqref{eq:quad_id},
	\[
		\ip{M_\lambda}{z z^*} = \frac{1}{n} (\norm{\diag(y) z}^2 - \norm{F \recmap z}^2) - \lambda \norm{\recmap z}^2.
	\]
	Combining these previous three displays, we obtain
	\[
		0 \leq (\constF p - 1) \ip{S(U)}{z z^*} + \frac{1}{n} \norm{F \recmap z}^2 + \lambda \norm{\recmap z}^2.
	\]
	Now, choose $z = z_k = (U_* - U R)v_k$,
	where $\{ v_k \}_{k=1}^p$ is an orthonormal basis for $\F^p$,
	and sum up the resulting inequalities to obtain
	\begin{align*}
		0 &\leq (\constF p - 1) \ip{S(U)}{(U_* - U R)(U_* - U R)^*} \\
		&\quad + \frac{1}{n} \norm{F \recmap (U_* - U R)}^2 + \lambda \norm{\recmap(U_* - U R)}^2 \\
		&= (\constF p - 1) \ip{S(U)}{(U_* - U R)(U_* - U R)^*} \\
		&\quad + \frac{1}{n} \norm{F (X_\lambda - X R)}^2 + \lambda \norm{X_\lambda - X R}^2.
	\end{align*}
	The fact that $S(U) U = 0$ from \Cref{lem:socp_char} implies
	\begin{align*}
		\ip{S(U)}{(U_* - U R)(U_* - U R)^*} &= \ip{S(U)}{U_* U_*^* - U U^*} \\
		&= \ip{M_\lambda}{U_* U_*^* - U U^*}.
	\end{align*}
	
	Thus we obtain
	\begin{align*}
		&\ip{M_\lambda}{U U^* - U_* U_*^*} \\
		&\quad\leq \frac{1}{\constF p - 1} \parens*{ \frac{1}{n} \norm{F(X_\lambda - X R)}^2 + \lambda \norm{X_\lambda - X R}^2}.
	\end{align*}
	Together with \eqref{eq:phasecut_basicdiff}, this proves the main inequality \eqref{eq:thm_phasecut_mainbd}.
	
	Finally, using \eqref{eq:Xlambda_def}, we can bound
	\begin{align*}
		&\parens*{\frac{1}{n} \norm{F (X_\lambda - X_*)}^2 + \lambda \norm{X_\lambda - X_*}^2}^{1/2} \\
		&\quad= \ip*{ \parens*{ \frac{1}{n} F^* F + \lambda I_d } }{(X_\lambda - X_*)(X_\lambda - X_*)^*}^{1/2} \\
		&\quad= \left\langle \parens*{ \lambda I_d + \frac{1}{n} F^* F }^{-1}, \right. \\
		&\qquad \left. \parens*{ -\lambda X_* + \frac{1}{n} F^* \diag(\varepsilon) U_* } \parens*{ -\lambda X_* + \frac{1}{n} F^* \diag(\varepsilon) U_* }^* \right\rangle^{1/2} \\
		&\quad\leq \ip*{ \parens*{ \lambda I_d + \frac{1}{n} F^* F }^{-1} }{\lambda^2 X_* X_*^* }^{1/2} \\
		&\qquad + \ip*{ \frac{1}{n} F \parens*{ \lambda I_d + \frac{1}{n} F^* F }^{-1} F^* }{ \frac{1}{n} (\diag(\varepsilon) U_* )( \diag(\varepsilon) U_* )^* }^{1/2} \\
		&\leq ( \lambda \tr(X_* X_*^*) )^{1/2} + \parens*{ \frac{1}{n} \tr (\diag(\varepsilon) U_* U_*^* \diag(\varepsilon)) }^{1/2} \\
		&\quad= \sqrt{\lambda} \norm{X_*} + \frac{\norm{\varepsilon}}{\sqrt{n}},
	\end{align*}
	which gives \eqref{eq:Xlam_errbd}.
\end{proof}

To finish, we sketch the proof of our auxiliary landscape lemma based on arguments from \cite{McRae2025preprinta,Endor2026}.
\begin{proof}[Proof of \Cref{lem:phasecut_socp}]
	The optimization problem is that of a smooth function on a smooth Riemannian manifold;
	in particular, the constraint set is a product of $n$ unit spheres in $\F^p$.
	First-order criticality of a feasible point $U$ is equivalent to $S(U) U = 0$,
	as this quantity is proportional to the Riemannian gradient of the objective at $U$.
	
	Second-order criticality requires, in addition to first-order criticality, that
	the Riemannian Hessian be positive semidefinite at $U$.
	For this problem, that means
	\[
		\ip{S(U)}{\Udt \Udt^*} \geq 0
	\]
	for all
	\[
		\Udt \in \tansp \coloneqq \{ \Udt \in \F^{n \times p} : \diag(U \Udt^* + \Udt U^*) = 0 \}.
	\]
	$\tansp$ is the tangent space to the constraint manifold at $U$.
	We denote by $\Ptansp$ the orthogonal projection (in $\F^{n \times p}$) to $\tansp$.
	To obtain a useful inequality,
	we consider, for unit-norm $v \in \F^p$, the tangent vector
	\begin{align*}
		\Udt_v &= \Ptansp(z v^*) \\
		&= z v^* - \ddiag(\real(z v^* U^*)) U \\
		&= z v^* - \real(D_z^* \diag(U v)) U.
	\end{align*}
	Then
	\begin{align*}
		\Udt_v \Udt_v^* &= z z^* - \real(D_z^* \diag(U v)) U v z^* \\
		&\quad - z (Uv)^* \real(D_z^* \diag(U v)) \\
		&\quad + \real(D_z^* \diag(U v)) U U^* \real(D_z^* \diag(U v)).
	\end{align*}
	We now consider separately the real and complex cases.
	In the real case ($\F = \R$),
	we have
	\begin{align*}
		\Udt_v \Udt_v^* &= z z^* - D_z \abs{U v}^2 z^* - z (\abs{Uv}^2)^* D_z \\
		&\qquad + D_z \diag(U v) U U^* \diag(U v) D_z,
	\end{align*}
	where, as before, $\abs{Uv}^2$ is the elementwise squared absolute value of the vector $Uv$.
	Note that, if $\{ v_k \}_{k=1}^p$ is an orthonormal basis for $\R^p$,
	we have, for all $i,j \in \{1, \dots, n\}$,
	\[
		\sum_{k=1}^p (U v_k)_i (U v_k)_j = (U U^*)_{ij}.
	\]
	Therefore,
	\[
		\sum_{k=1}^p D_z \abs{U v_k}^2 z^* = D_z \ones z^* = z z^*,
	\]
	and
	\[
		\sum_{k=1}^p D_z \diag(U v_k) U U^* \diag(U v_k) D_z
		= D_z (U U^*)^{\circ 2} D_z.
	\]
	This implies
	\[
		\sum_{k=1}^p \Udt_{v_k} \Udt_{v_k}^* = (p - 2) z z^* + D_z (U U^*)^{\circ 2} D_z,
	\]
	so, finally,
	\begin{align*}
		0 &\leq \sum_{k=1}^p \ip{S(U)}{\Udt_{v_k} \Udt_{v_k}^*} \\
		&= \ip*{ S(U) }{ \sum_{k=1}^p \Udt_{v_k} \Udt_{v_k}^* } \\
		&= \ip{S(U)}{(p-2)z z^* + D_z (U U^*)^{\circ 2} D_z}.
	\end{align*}
	This completes the proof in the real case.
	
	In the complex case ($\F = \C$),
	the argument is similar but messier.
	We now have
	\begin{align*}
		\Udt_v \Udt_v^*
		&= z z^*  - \frac{1}{2} (D_z^* \diag(Uv) + D_z \diag(Uv)^*) Uv z^* \\
		&\quad - \frac{1}{2} z (Uv)^* (\diag(Uv)^* D_z + \diag(Uv) D_z^*) \\
		&\quad + \frac{1}{4} (D_z^* \diag(Uv) + D_z \diag(Uv)^*) U U^* \\
		&\qquad \times (\diag(Uv)^* D_z + \diag(Uv) D_z^*).
	\end{align*}
	To simplify this, first, one can easily verify that
	\begin{align*}
		\Udt_v \Udt_v^*
		+ \Udt_{i v} \Udt_{i v}^*
		&= 2 z z^* - D_z \abs{Uv}^2 z^* - z (\abs{Uv}^2)^* D_z^* \\
		&\quad + \frac{1}{2} D_z^* \diag(Uv) U U^* \diag(Uv)^* D_z \\
		&\quad + \frac{1}{2} D_z \diag(Uv)^* U U^* \diag(Uv) D_z^*.
	\end{align*}
	If $\{ v_k \}_{k=1}^p$ is an orthonormal basis of $\C^p$,
	we have
	\[
		\sum_{k=1}^p (U v_k)_i (U v_k)_j^* = (U U^*)_{ij},
	\]
	so
	\begin{align*}
		&\sum_{k=1}^p (\Udt_{v_k} \Udt_{v_k}^* + \Udt_{iv_k} \Udt_{iv_k}^*) \\
		&\quad= (2p - 2) z z^* + \frac{1}{2} ( D_z^* (U U^*)^{\circ 2} D_z + D_z \abs{U U^*}^2 D_z^* ) \\
		&\quad= (2p - 2) z z^* + \real(D_z^* U U^* D_z) \circ (U U^*).
	\end{align*}
	The result follows similarly to the real case.
\end{proof}
\section{Some statistical consequences for phase retrieval}
\label{sec:stats}
In this section, we give some statistical results for ordinary phase retrieval, that is,
recovering a vector $x_*$ from (noisy) measurements of the form $y_i \approx \abs{\ip{f_i}{x_*}}$.
The results are not exhaustive but are intended to illustrate the usefulness of the deterministic tools we have developed in \Cref{sec:determ_direct,sec:phasecut}.
Further implications, in particular results for general rank-$r$ semidefinite matrix sensing problems, are left to future work.

\subsection{Finite dimension, isotropic sub-Gaussian measurements}
\label{sec:subG}
First, we consider the case of estimating $x_* \in \F^d$ from measurements of the form $y_i \approx \abs{\ip{f_i}{x_*}}$,
where $f_1, \dots, f_n$ are isotropic and sub-Gaussian.
We adopt the assumptions of \cite{Krahmer2018,Krahmer2020},
presented as stated in \cite{McRae2026a}; see those papers for further context and references.
These assumptions are, to the best of our knowledge, among the most general under which one can obtain known-optimal statistical guarantees.
Specifically, we assume that the measurement vectors $f_i$ are independent and identically distributed (i.i.d.),
and, furthermore, that their entries are i.i.d.\ copies of a sub-Gaussian zero-mean random variable $\crdvar$.
We say\footnote{This is one of several equivalent conditions common in the literature.} that a zero-mean random variable $\crdvar$ is $K$--sub-Gaussian for $K > 0$ if $\E e^{\abs{w}^2/K^2} \leq 2$.
We additionally need some moment conditions on $w$ to ensure that the phase retrieval map $x_* \mapsto \alpha(x_*)$ is injective (modulo trivial symmetries).

The following statistical landscape result for the nonconvex problems \eqref{eq:unreg_gen_sqrt} (equivalently, \eqref{eq:opt_ncvx_sqrt} with $\lambda = 0$) and \eqref{eq:phasecut_gen} is a counterpart to \cite[Thm.~3]{McRae2025preprinta}, which instead studied the landscape of \eqref{eq:unreg_gen_quartic}:
\begin{theorem}
	\label{thm:finite_subG}
	Consider the model \eqref{eq:gen_model_sqrt} with rank-one $Z_* = x_* x_*^*$ for nonzero $x_* \in \F^d$.
	Suppose $A_i = f_i f_i^*$, where $f_1, \dots, f_n$ are i.i.d.\ random vectors whose entries are i.i.d.\ copies of a zero-mean random variable $\crdvar$.
	If $\F = \R$ but $w$ is complex, we can take $A_i = \real(f_i f_i^*)$.
	Suppose the following properties are true about $\crdvar$ and $x_*$:
	\begin{itemize}
		\item $\E \abs{\crdvar}^2 = 1$.
		\item $\crdvar$ is $K$--sub-Gaussian for some $K > 0$.
		\item At least one of the following two statements holds:
		\begin{enumerate}
			\item $\E \abs{\crdvar}^4 > 1$, or
			\item $\norm{x_*}_\infty \leq \incohp \norm{x_*}$ for a sufficiently small universal constant $\incohp > 0$.
		\end{enumerate}
		\item If $\F = \C$, $\abs{\E \crdvar^2} < 1$.
	\end{itemize}
	Then there exist $c_1, c_2, c_3, c_4, c_5 > 0$ depending only on the properties of $\crdvar$ (not on the dimension $d$) such that, if $n \geq c_1 d$, with probability at least $1 - c_2 n^{-2}$,
	for all
	\[
		p \geq c_3,
	\]
	for every second-order critical point $X$ of \eqref{eq:unreg_gen_sqrt} or (in the case $A_i = f_i f_i^*$) $X = F^\dagger \diag(y) U$ where $U$ is any second-order critical point of \eqref{eq:phasecut_gen} with $\lambda = 0$, we have
	\[
		\nucnorm{X X^* - x_* x_*^*} \leq c_4 \parens*{ \norm{x_*} \frac{\norm{\varepsilon}}{\sqrt{n}} + \frac{\norm{\varepsilon}^2}{n} }.
	\]
	Furthermore, if a closest rank-1 approximation to $X$ in $\norm{\cdot}$ is $\xhat v^*$ 
	for some $\xhat \in \F^d$ and unit-norm $v \in \F^p$, we have
	\[
		\min_{\abs{s} = 1}~\norm{\xhat - s x_*}
		\leq c_5 \frac{\norm{\varepsilon}}{\sqrt{n}}.
	\]
\end{theorem}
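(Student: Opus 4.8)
\emph{Proof strategy.}
The plan is to feed the deterministic landscape inequality of \Cref{thm:landscape_sqrt} (and its PhaseCut twin \Cref{thm:landscape_phasecut}), both specialized to $\lambda = 0$ and $\gtrank = 1$ so that $X_* = x_*$ and the free matrix $R$ is a vector in $\F^p$, into a probabilistic bootstrap. The starting observation is that $\norm{\alpha(Y)} = \norm{FY}$ for every $Y \in \F^{d \times p}$, so both results read: every relevant second-order critical point $X$ satisfies, for every $R \in \F^p$,
\[
	\tfrac1n \norm{\alpha(X) - \alpha(x_*)}^2 \;\le\; \tfrac2n \ip{\varepsilon}{\alpha(X) - \alpha(x_*)} \;+\; \tfrac{1}{\constF p - 1}\, \tfrac1n \norm{F(x_* - XR)}^2,
\]
with $x_*$ replaced in the residual by $X_0 \coloneqq \lim_{\lambda \downarrow 0} X_\lambda$ in the PhaseCut case; since \eqref{eq:Xlam_errbd} at $\lambda = 0$ gives $\tfrac1n \norm{F(X_0 - x_*)}^2 \le \norm{\varepsilon}^2/n$, the two cases differ by only a harmless additive $\norm{\varepsilon}^2/n$, so I would treat them together. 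Writing $e \coloneqq \tfrac1{\sqrt n} \norm{\alpha(X) - \alpha(x_*)}$, Cauchy--Schwarz bounds the noise term by $2 \tfrac{\norm{\varepsilon}}{\sqrt n} e$, which is linear in the error.

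Next I would record the high-probability events, holding for $n \ge c_1 d$ with probability $1 - c_2 n^{-2}$; these are essentially the events already used for the quartic loss in \cite{McRae2025preprintb}, and the moment hypotheses on $\crdvar$ are exactly what make them available at $n \approx d$. Namely: (i) the sub-Gaussian covariance estimate $\opnorm{\tfrac1n F^* F - I_d} \le \tfrac14$, whence $\tfrac34 \norm{w}^2 \le \tfrac1n \norm{Fw}^2 \le \tfrac54 \norm{w}^2$ for all $w \in \F^d$ and $\opnorm{\tfrac1n \sum_i A_i} \le \tfrac54$; and (ii) an $\ell_1$-type restricted isometry for the amplitude measurement operator $M \mapsto (\ip{A_i}{M})_i$, namely $\tfrac1n \sum_i \abs{\ip{A_i}{M}} \gtrsim \nucnorm{M}$ uniformly over all $M$ of the form $XX^* - x_*x_*^*$ (a positive semidefinite matrix minus a rank-one positive semidefinite matrix), together with a companion quadratic lower bound on $\tfrac1n \norm{\alpha(X) - \alpha(x_*)}^2$ in terms of the distance between $XX^*$ and $x_*x_*^*$. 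The assumptions $\E \abs{\crdvar}^4 > 1$ (or the incoherence alternative $\norm{x_*}_\infty \le \incohp \norm{x_*}$) and $\abs{\E \crdvar^2} < 1$ in the complex case are precisely what rule out the degenerate directions where the amplitude map fails to be stably injective, as in \cite{Krahmer2018,Krahmer2020}.

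The heart of the proof is the bootstrap. I would choose $R = X^\dagger x_*$, so that $x_* - XR = \PGp x_*$ with $G$ the column space of $X$ and, by (i), the residual is $\tfrac1n \norm{F(x_* - XR)}^2 \le \tfrac54 \norm{\PGp x_*}^2$; the displayed inequality becomes $e^2 \lesssim \tfrac{\norm{\varepsilon}}{\sqrt n} e + \tfrac1{\constF p - 1} \norm{\PGp x_*}^2$. The crux is then a \emph{self-consistency} estimate: the part of $x_*$ missed by $X$ must itself be controlled by the error, $\norm{\PGp x_*} \lesssim e + \tfrac{\norm{\varepsilon}}{\sqrt n}$. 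This is the step that genuinely uses the \emph{second-order} (Hessian) information rather than just the first-order inequality above --- one feeds a direction $\dot X = u v^*$ with $u$ proportional to $\PGp x_*$ into the inequality \eqref{eq:hesscond} (equivalently, the bound of \Cref{lem:landscape_gen}) and combines it with the restricted isometry~(ii). Substituting the self-consistency bound back in and taking $p \ge c_3$ large enough that $\tfrac1{\constF p - 1}$ absorbs the resulting $e^2$ term into the left side, the inequality collapses to $e \lesssim \tfrac{\norm{\varepsilon}}{\sqrt n}$. Separately, the gradient condition $\nabla L_0(XX^*) X = 0$ forces $\tfrac1n \sum_i \alpha(X)_i^2 = \tfrac1n \ip{y}{\alpha(X)}$, which with (i) yields the a priori trace bound $\norm{X}^2 = \tr(XX^*) \lesssim \norm{x_*}^2 + \tfrac{\norm{\varepsilon}^2}{n}$.

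To finish, factor $\ip{A_i}{XX^* - x_*x_*^*} = (\alpha(X)_i - \alpha(x_*)_i)(\alpha(X)_i + \alpha(x_*)_i)$ and apply Cauchy--Schwarz: $\tfrac1n \sum_i \abs{\ip{A_i}{XX^* - x_*x_*^*}} \le e \bigl( \tfrac1n \sum_i (\alpha(X)_i + \alpha(x_*)_i)^2 \bigr)^{1/2} \lesssim e \bigl( \norm{x_*} + \tfrac{\norm{\varepsilon}}{\sqrt n} \bigr) \lesssim \norm{x_*} \tfrac{\norm{\varepsilon}}{\sqrt n} + \tfrac{\norm{\varepsilon}^2}{n}$, where the middle factor is bounded by (i) and the trace bound; the $\ell_1$ lower isometry (ii) then gives the claimed $\nucnorm{XX^* - x_*x_*^*} \le c_4 \bigl( \norm{x_*} \tfrac{\norm{\varepsilon}}{\sqrt n} + \tfrac{\norm{\varepsilon}^2}{n} \bigr)$. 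For the vector bound, the best rank-one approximation $\hat x v^*$ of $X$ has $\norm{\hat x \hat x^* - x_*x_*^*} \le 2 \norm{XX^* - x_*x_*^*}$ by optimality and the triangle inequality, and a standard perturbation estimate for rank-one positive semidefinite matrices (comparing top eigenvectors) converts this into $\min_{\abs{s} = 1} \norm{\hat x - s x_*} \lesssim \norm{\hat x \hat x^* - x_*x_*^*} / \norm{x_*} \lesssim \tfrac{\norm{\varepsilon}}{\sqrt n}$ when $\tfrac{\norm{\varepsilon}}{\sqrt n} \lesssim \norm{x_*}$, and trivially (using $\norm{\hat x} \le \opnorm{X} \le \norm{X} \lesssim \norm{x_*} + \tfrac{\norm{\varepsilon}}{\sqrt n}$) otherwise. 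I expect the main obstacle to be the self-consistency estimate of the third paragraph, together with establishing (ii) uniformly over $X$ when $p$ is large; the latter is handled as in \cite{Krahmer2020,McRae2025preprintb} by exploiting that $XX^*$ is positive semidefinite, for which the one-sided $\ell_1$ concentration needs no rank restriction, so that only the subtracted rank-one part $x_*x_*^*$ calls for a genuine low-rank restricted isometry. Crucially, and unlike the quartic loss of \cite{McRae2025preprintb}, the residual here is $\tfrac1n \norm{F(x_* - XR)}^2$, which concentrates by plain sub-Gaussian matrix concentration with no heavy tails and hence no logarithmic loss --- which is exactly why a constant level of overparametrization $p \ge c_3$ suffices.
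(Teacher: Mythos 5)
Your overall plan --- feed the deterministic landscape inequality of \Cref{thm:landscape_sqrt}/\Cref{thm:landscape_phasecut} into a probabilistic bootstrap using sub-Gaussian covariance concentration plus an $\ell_1$-type restricted lower isometry \`a la \cite{Krahmer2020} --- matches the paper's proof in spirit, and the ancillary steps (covariance concentration, the factorization $\ip{A_i}{XX^* - x_*x_*^*} = (\alpha(X)_i - \alpha(x_*)_i)(\alpha(X)_i + \alpha(x_*)_i)$ with Cauchy--Schwarz, the a~priori trace bound via the first-order condition, the split into the trivial large-noise regime, the rank-one perturbation argument at the end) are all present in the paper as \Cref{lem:subG_conc,lem:subG_lb,lem:ab_ineq,lem:a_ineq,lem:nucnorm_lb}. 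However, there is a genuine gap at the step you yourself flag as ``the crux,'' and the gap goes in a direction opposite to what you expect.

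You propose $R = X^\dagger x_*$ so that the residual is $\|\PGp x_*\|^2$, and you then need a ``self-consistency'' estimate $\|\PGp x_*\| \lesssim e + \|\varepsilon\|/\sqrt n$ to close the bootstrap. You claim this is the step that ``genuinely uses the second-order (Hessian) information rather than just the first-order inequality above'' and propose to obtain it by plugging a direction $\Xdt = u v^*$ with $u \propto \PGp x_*$ back into \eqref{eq:hesscond}. This is a misdiagnosis on two counts. First, the displayed landscape inequality already \emph{is} the full second-order condition: \Cref{thm:landscape_sqrt} is derived from \eqref{eq:hesscond} by summing over an orthonormal basis of right factors, so there is no further second-order information to extract by re-feeding a particular $\Xdt$ into \eqref{eq:hesscond}. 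Second, the gradient term $\ip{\nabla L_0(XX^*)}{u u^*} = \frac1n \sum_i \ell'_i \abs{\ip{f_i}{u}}^2$ has indeterminate sign (each $\ell'_i = 1 - y_i/\alpha(X)_i$ can be negative), so plugging $u \propto \PGp x_*$ into \eqref{eq:hesscond} does not in any obvious way produce a lower bound on $e$ in terms of $\|\PGp x_*\|$. As written, this step would fail.

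What actually closes the loop is a purely deterministic, problem-data-free algebraic fact: the paper's \Cref{lem:nucnorm_lb}, which says that for the leading right singular vector $v$ of $X$ (suitably phase-aligned), $\nucnorm{XX^* - x_*x_*^*} \gtrsim \norm{x_*}\,\norm{x_* - Xv}$. Its proof uses only the optimality of $Xvv^*X^*$ as a rank-one approximation of $XX^*$ in Frobenius norm and an elementary inequality for rank-one PSD differences; no Hessian, no probability, no RIP. Combined with \Cref{lem:ab_ineq} and the $\ell_1$ restricted isometry \Cref{lem:subG_lb}, one gets simultaneously $e^2 \gtrsim \nucnorm{XX^* - x_*x_*^*}^2/\norm{x_*}^2$ and $e^2 \gtrsim \norm{x_* - Xv}^2$ (the paper's \eqref{eq:alpha_err_lb}). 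The paper then simply takes $R = v$ in \Cref{thm:landscape_sqrt}, so the residual $\frac1n\norm{\alpha(x_* - Xv)}^2 \approx \norm{x_* - Xv}^2$ appears on the right with a $1/p$ prefactor and cancels against the $\norm{x_* - Xv}^2$ on the left once $p \ge c_3$, leaving exactly $\nucnorm{XX^* - x_*x_*^*}^2/\norm{x_*}^2 \lesssim \norm{\varepsilon}^2/n$. Your choice $R = X^\dagger x_*$ gives a tighter residual and, with $\|\PGp x_*\| \le \norm{x_* - Xv}$ plus \Cref{lem:nucnorm_lb}, your bootstrap closes as well --- but the ingredient you are missing is this elementary algebraic lemma, not an extra Hessian argument.
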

We prove this in \Cref{sec:stats_proofs}.
A version of this result for more general measurement covariance and $\lambda \geq 0$ (but assuming Gaussian measurement vectors) appears as \Cref{thm:infdim} below.
In contrast to \cite[Thm.~3]{McRae2025preprinta}, which requires a logarithmic overparametrization $p \approx d \log d$ in the case $n \approx d$,
\Cref{thm:finite_subG} only requires overparametrization by the constant factor $c_3$.
On the other hand, unlike certain more specialized results in \cite{McRae2026a},
\Cref{thm:finite_subG} does not apply when $p = 1$.
It is not clear under what conditions we might expect the exactly-parametrized problem \eqref{eq:unreg_pr_sqrt} to have a benign landscape,
and a tighter analysis (in terms of constants) would be necessary to prove such a result.

The error bounds of \Cref{thm:finite_subG} with respect to noise are optimal within constants.
Indeed, if $\varepsilon = \delta \alpha(x_*)$ for some $\delta \geq 0$,
we have, with high probability, $\norm{\varepsilon} \approx \delta \sqrt{n} \norm{x_*}$,
and, as $y = (1 + \delta) \alpha(x_*)$, any exact recovery algorithm will return $x_\delta \coloneqq (1 + \delta) x_*$,
which has vector error
\[
	\norm{x_\delta - x_*}
	= \delta \norm{x_*}
	\approx \frac{\norm{\varepsilon}}{\sqrt{n}}
\]
and matrix error
\begin{align*}
	\nucnorm{x_\delta x_\delta^* - x_* x_*^*}
	&= [(1 + \delta)^2 - 1] \norm{x_*}^2 \\
	&= (2 \delta + \delta^2) \norm{x_*}^2 \\
	&\approx \norm{x_*} \frac{\norm{\varepsilon}}{\sqrt{n}} + \frac{\norm{\varepsilon}^2}{n}.
\end{align*}

Compared to \cite[Thm.~3]{McRae2025preprinta}, which considers the problem \eqref{eq:unreg_gen_quartic},
we only need the optimization rank $p$ to be constant rather than (potentially) logarithmic in the dimension $d$. On the other hand, if the noise is non-adversarial (e.g., i.i.d.\ Gaussian), \Cref{thm:finite_subG} gives a poorer error bound scaling (by a factor of $\sqrt{\frac{n}{d}}$ for large $n$) than \cite[Thm.~3]{McRae2025preprinta}.
We believe this is an artifact of our analysis, as initial numerical experiments suggest that \eqref{eq:unreg_gen_sqrt} has similar or better statistical performance with noise than \eqref{eq:unreg_gen_quartic}, but it is unclear how to improve our analysis.
The biggest technical obstacle seems to be the fact, noted in \cite{Balan2022}, that for $p > 1$ we cannot in general lower bound $\norm{\alpha(X) - \alpha(x_*)}$ by some constant multiple of $\min_{\norm{v} = 1}~\norm{X - x_* v^*}$.

\subsection{Infinite dimension, Gaussian measurements}
\label{sec:infdim}
Next, we consider the case where the dimension of the problem is potentially large (even infinite) compared to the number of measurements $n$.
We will assume, for simplicity, that the measurements are Gaussian, though this can likely be relaxed;
an interesting potential further development would be recovery results for phase retrieval in a general reproducing kernel Hilbert space.

Let $\scrH$ be a (real or complex) Hilbert space with inner product $\ip{\cdot}{\cdot}$ and induced norm $\norm{\cdot}$.
We say $f \in \scrH$ is a zero-mean Gaussian random variable on $\scrH$ with covariance $\Sigma$ if,
for every $u \in \scrH$, $\ip{u}{f}$ is zero-mean Gaussian,
and, for every $u, v \in \scrH$, $\E \ip{u}{f} \overline{\ip{v}{f}} = \ip{\Sigma u}{v}$.
We assume $\Sigma$ is \emph{trace-class}, that is, $\tr \Sigma = \E \norm{f}^2 < \infty$.

We denote, for $u, v \in \scrH$, the $L_2$ inner product
\begin{align*}
	\iplt{u}{v} \coloneqq \E \ip{u}{f} \overline{\ip{v}{f}} = \ip{\Sigma u}{v}
\end{align*}
with induced norm
\[
	\normlt{u}^2 \coloneqq \E \abs{\ip{u}{f}}^2 = \norm{\Sigma^{1/2} u}^2.
\]
With this, we can define various norms on operators on $\scrH$.
In particular, we will use the $L_2$ nuclear and operator norms
\begin{align*}
	\nucnormlt{T} &\coloneqq \nucnorm{\Sigma^{1/2} T \Sigma^{1/2}} \qquad  \text{and} \\
	\opnormlt{T} &\coloneqq \opnorm{\Sigma^{1/2} T \Sigma^{1/2}}
\end{align*}
defined for trace-class operators $T$ on $\scrH$,
where $\opnorm{\cdot}$ and $\nucnorm{\cdot}$ are the operator and nuclear norms as defined on operators on $\scrH$.

If $\scrH$ is infinite-dimensional, we cannot hope to recover arbitrary $x_* \in \scrH$ exactly with finite samples.
Instead, we try to estimate $x_*$ accurately in an $L_2$ or regression sense.
Thus we aim not to ``invert'' the measurement process but rather to ``learn'' the mapping $f \mapsto \ip{x_*}{f}$.
In the linear regression case,
where we observe i.i.d\ copies of $(f, \ip{x_*}{f})$ without the nonlinear absolute value,
the difficulty is determined by the eigenvalues of $\Sigma$.
Indeed, these will feature prominently in the phase retrieval result below.

To estimate $x_*$ from measurements of the form $y_i \approx \abs{\ip{f_i}{x_*}}$,
we can still use the algorithms and theory of \Cref{sec:determ_direct,sec:phasecut}.
However, we must make several adaptations and clarifications:
\begin{itemize}
	\item We replace $\F^{d \times p}$ by $\scrH^p$.
	We overload our previous notation to write, for $X = (x_1, \dots, x_p) \in \scrH^p$,
	\[
		\norm{X}^2 = \sum_{k=1}^p \norm{x_k}^2, \qquad \normlt{X}^2 = \sum_{k=1}^p \normlt{x_k}^2,
	\]
	and
	\[
		X X^* = \sum_{k=1}^p x_k^{\otimes 2},
	\]
	where $x^{\otimes 2} = x \otimes x$ denotes the tensor product of $x \in \scrH$ with itself.
	Furthermore, for any linear operator $T$ with domain $\scrH$, we write $T X = (T x_1, \dots, T x_n)$.
	\item For measurement vectors $f_1, \dots, f_n \in \scrH$, we now denote by $F \colon \scrH \to \F^n$ the linear operator defined by
	\[
		F x = \begin{bmatrix*}
			\ip{x}{f_1} \\ \vdots \\ \ip{x}{f_n}.
		\end{bmatrix*}.
	\]
	With the linear map notation overloading from the previous bullet point, we can then write, for $X \in \scrH^p$,
	\[
		\alpha(X) = \abs{F X}
	\]
	similarly to the finite-dimensional case.
	\item With this notation, the nonconvex problem \eqref{eq:opt_ncvx_sqrt} becomes
	\begin{equation}
		\label{eq:infdim_opt_sqrt}
		\min_{X \in \scrH^p}~\frac{1}{n} \norm{\alpha(X) - y}^2 + \lambda \norm{X}^2,
	\end{equation}
	while the PhaseCut-reformulated problem \eqref{eq:phasecut_gen} remains unchanged,
	noting that the matrix $F F^*$ is given by $(F F^*)_{ij} = \ip{f_j}{f_i}$.
	\item We do not consider computation in detail, but, for example, this can be done by the standard ``kernel trick'' if we have access to inner products $\ip{u}{v}$ for arbitrary $u,v \in \scrH$.
\end{itemize}

With these adaptations, we have the following nonconvex landscape and recovery result:
\newcommand{\tgtdim}{d}
\newcommand{\Sigperp}{\Sigma_\tgtdim^\perp}
\begin{theorem}
	\label{thm:infdim}
	Consider the model $y = \alpha(x_*) + \varepsilon$
	for nonzero $x_* \in \scrH$, where $\scrH$ is a Hilbert space over $\F$,
	$f_1, \dots, f_n$ are i.i.d.\ Gaussian random vectors in $\scrH$ (circularly symmetric if $\F = \C$) with trace-class covariance $\Sigma$,
	and $\varepsilon \in \R^n$ is such that $y_1, \dots, y_n \geq 0$.
	Let the eigenvalues of $\Sigma$ be
	\[
		\sigma_1 \geq \sigma_2 \geq \cdots \geq 0.
	\]
	There exist universal constants $c_1, \dots, c_6 > 0$ such that the following holds.
	
	Fix a positive integer $d \leq \rank(\Sigma)$.
	If $n \geq c_1 \tgtdim$,
	with probability at least $1 - c_2 n^{-2}$,
	for all
	\[
		p \geq c_3 \quad \text{and} \quad \lambda \geq c_4\parens*{ \sigma_{\tgtdim + 1} + \frac{1}{n} \sum_{m > \tgtdim} \sigma_m }
	\]
	(with the convention that, if $d = \rank(\Sigma) < \infty$, $\sigma_{d+1} = 0$),
	every second-order critical point $X$ of \eqref{eq:infdim_opt_sqrt} or, alternatively,
	\[
		X = F^*(n \lambda I_n + F F^*)^{-1} \diag(y) U,
	\]
	where $U$ is any second-order critical point of \eqref{eq:phasecut_gen},
	satisfies
	\[
		\nucnormlt{X X^* - x_* x_*^*}
		\leq c_5 \parens*{ \frac{\norm{\varepsilon}^2}{n} + \normlt{x_*} \parens*{\frac{\norm{\varepsilon}}{\sqrt{n}} + \sqrt{\lambda} \norm{x_*} } }.
	\]
	Furthermore, if the closest rank-1 approximation to $X$ in $\normlt{\cdot}$ is $\xhat v^* = (\vbr_1 \xhat, \dots, \vbr_p \xhat)$ 
	for some $\xhat \in \scrH$ and unit-norm $v \in \F^p$, we have
	\[
		\min_{\abs{s} = 1}~\normlt{\xhat - s x_*}
		\leq c_6 \parens*{\frac{\norm{\varepsilon}}{\sqrt{n}} + \sqrt{\lambda} \norm{x_*} }.
	\]
\end{theorem}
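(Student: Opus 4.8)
The plan is to feed the deterministic landscape bound of \Cref{thm:landscape_sqrt} (equivalently \Cref{thm:landscape_phasecut}, whose conclusion differs only by replacing $X_*$ with the ridge estimate $X_\lambda$, which \eqref{eq:Xlam_errbd} already controls in terms of $\sqrt{\lambda}\norm{x_*}$ and $\norm{\varepsilon}/\sqrt n$) into Gaussian concentration estimates. Since \Cref{lem:socp_char} and \Cref{lem:phasecut_socp} hold verbatim in the Hilbert-space setting, both \eqref{eq:infdim_opt_sqrt} and the PhaseCut problem \eqref{eq:phasecut_gen} can be handled simultaneously. Starting from the inequality of \Cref{thm:landscape_sqrt}, I would first dispose of the cross term via Cauchy--Schwarz, $\frac2n\ip{\varepsilon}{\alpha(X)-\alpha(x_*)} \le \frac{2\norm{\varepsilon}}{\sqrt n}\cdot\frac{\norm{\alpha(X)-\alpha(x_*)}}{\sqrt n}$, absorbing a fraction of $\frac1n\norm{\alpha(X)-\alpha(x_*)}^2$ and picking up a term of order $\norm{\varepsilon}^2/n$. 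Since $\frac1n\norm{\alpha(X)}^2 = \frac1n\norm{FX}^2$, the resulting bound also controls this quantity, and hence (by the restricted isometry below) $\normlt{X}^2$, in terms of $\normlt{x_*}^2$, $\norm{\varepsilon}^2/n$, and a bias term; this a priori bound on $\normlt{X}$ will be reused.

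The core of the argument is then two Gaussian concentration facts, each proved by a covering argument valid once $n \gtrsim d$ and holding with probability $\ge 1-c_2 n^{-2}$. First, splitting $\Sigma$ into its top-$d$ eigenspace part and its tail, for vectors $v$ in any fixed $O(p)$-dimensional subspace one has $\frac1n\norm{Fv}^2 = \normlt{v}^2 + O\bigl((\sigma_{d+1} + \frac1n\sum_{m>d}\sigma_m)\norm{v}^2\bigr)$ up to constants, the rank-$d$ part being handled by a net over the Grassmannian and the tail uniformly by concentration of $\frac1n\sum_i\norm{\Pi^\perp f_i}^2$ around its mean $\sum_{m>d}\sigma_m$ (where $\Pi^\perp$ projects onto the tail eigenspace) together with the tail operator norm $\sigma_{d+1}$; this is exactly the quantity the hypothesis on $\lambda$ is calibrated to dominate, so the tail contributes only bias that is absorbed into the $\lambda$-terms. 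Applying this with $R$ chosen so that $XR$ is the $L_2$-projection of $x_*$ onto the span of the columns of $X$, the nonconvexity penalty in \Cref{thm:landscape_sqrt} becomes $\frac{1}{\constF p - 1}(\normlt{x_*}^2 - \normlt{\Pi_X x_*}^2) + (\text{bias})$; importantly, this residual need not be small, since it is offset against the left-hand side at the population level.

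Second, and this is where I expect the main obstacle, I need a lower bound $\frac1n\norm{\alpha(X)-\alpha(x_*)}^2 \gtrsim \nucnormlt{XX^* - x_*x_*^*}^2 - (\text{bias})$, after using the a priori control of $\normlt{X}$. Writing $\frac1n\norm{\alpha(X)-\alpha(x_*)}^2 = \frac1n\sum_i\bigl(\sqrt{\ip{A_i}{XX^*}} - \sqrt{\ip{A_i}{x_*x_*^*}}\bigr)^2$ and using $(\sqrt a-\sqrt b)^2 = \frac{(a-b)^2}{(\sqrt a+\sqrt b)^2}$, together with the control on $\normlt{X}$ to bound the denominators, this reduces to a rank-restricted lower isometry for the quartic form $\frac1n\sum_i\ip{A_i}{XX^* - x_*x_*^*}^2$; since $XX^* - x_*x_*^*$ has rank $\le p+1$, a Grassmannian net argument for Gaussian measurements supplies $\gtrsim \nucnormlt{XX^* - x_*x_*^*}^2$ up to constants and the $\Sigma$-tail bias. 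Making the ``offsetting'' of the penalty against this lower bound rigorous in the overparametrized ($p>1$) regime --- the difficulty flagged after \Cref{thm:finite_subG} --- is the delicate point; if a clean one-shot bound fails, a fallback is to bootstrap, first showing the best rank-one part of $X$ is a coarse estimator of $x_*$ and then iterating. Assembling the landscape inequality, the noise absorption, the penalty bound, and this lower isometry yields the claimed estimate on $\nucnormlt{XX^* - x_*x_*^*}$, with the three terms $\norm{\varepsilon}^2/n$, $\normlt{x_*}\norm{\varepsilon}/\sqrt n$, and $\normlt{x_*}\sqrt{\lambda}\norm{x_*}$ arising respectively from noise absorption, the cross term, and the $\lambda$-regularization together with the $\Sigma$-tail bias.

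Finally, the rank-one recovery statement follows from the matrix bound by standard perturbation theory: since $XX^*$ is positive semidefinite and close in $\nucnormlt{\cdot}$ to the rank-one $x_*x_*^*$, its leading eigenvector in the $\Sigma$-weighted geometry --- which determines the best rank-one approximation $\xhat v^*$ --- is close to $x_*/\normlt{x_*}$, and a Davis--Kahan/Wedin-type bound converts the nuclear-norm control into $\min_{\abs s = 1}\normlt{\xhat - s x_*} \lesssim \norm{\varepsilon}/\sqrt n + \sqrt{\lambda}\norm{x_*}$, the off-rank-one mass of $XX^*$ being harmless because it too is dominated by the nuclear-norm bound.
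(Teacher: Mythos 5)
Your overall architecture matches the paper's: apply \Cref{thm:landscape_sqrt}/\Cref{thm:landscape_phasecut}, absorb the noise cross term by Cauchy--Schwarz, get an a priori bound on $\normlt{X}$, split $\Sigma$ into its top-$d$ block plus $x_*$ and a tail whose empirical operator norm is dominated by $\lambda$, and use a restricted lower isometry on the finite-dimensional part. But you explicitly leave open the one step that is the actual content of the theorem: how the $\frac{1}{\constF p-1}$ penalty is offset against the lower bound. The paper closes this with \Cref{lem:nucnorm_lb}: it chooses $R = v$ where $(Xv)v^*$ is the best rank-one approximation of ($\PG$ times) $X$, so that $\nucnormlt{\PG XX^*\PG - x_*x_*^*} \gtrsim \normlt{x_*}\,\normlt{x_*-\PG Xv}$. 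The \emph{same} quantity $\normlt{x_*-\PG Xv}^2$ then appears with a constant coefficient in the lower bound \eqref{eq:infdim_alpha_lb} and with coefficient $O(1/p)$ in the penalty, so for $p$ above a universal constant the penalty is absorbed --- no bootstrap is needed. Your choice of $R$ (projection of $x_*$ onto the column span of $X$) gives an even smaller penalty, but without an analogue of \Cref{lem:nucnorm_lb} you have no term on the lower-bound side to absorb it into, which is exactly why you end up flagging this as unresolved.

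The second gap is in your proposed lower bound. You reduce $(\sqrt a-\sqrt b)^2=(a-b)^2/(\sqrt a+\sqrt b)^2$ to ``a rank-restricted lower isometry for the quartic form $\frac1n\sum_i\ip{A_i}{XX^*-x_*x_*^*}^2$'' after ``bounding the denominators.'' The a priori control of $\normlt{X}$ only bounds $\frac1n\norm{\alpha(X)+\alpha(x_*)}^2$ in aggregate, not each denominator $(\sqrt{a_i}+\sqrt{b_i})^2$ individually; a uniform per-index bound for Gaussian measurements costs a $\log n$ factor, and uniform lower bounds on the quartic form itself are precisely the heavy-tailed object responsible for the $d\log d$ barrier the paper is trying to avoid. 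The paper instead applies Jensen's inequality to the convex map $(x,y)\mapsto x^2/y$ (\Cref{lem:ab_ineq}), which produces the \emph{$\ell_1$} quantity $\frac1n\norm{\beta(\PG X)-\beta(x_*)}_1$ in the numerator and the aggregate denominator $\frac1n\norm{\alpha(\PG X)+\alpha(x_*)}^2$; the $\ell_1$ quantity is then lower-bounded by $\nucnorm{\cdot}$ via the Krahmer--Stöger-type bound \Cref{lem:subG_lb}, which holds with $n\gtrsim d$ and no logarithms. Your Grassmannian-net route for the squared form would either fail or degrade the sample complexity. The final Davis--Kahan step you describe is also unnecessary: the vector bound follows directly from \Cref{lem:nucnorm_lb} once the nuclear-norm bound is in hand.
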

The assumption that each $y_i \geq 0$ was introduced and discussed in \Cref{sec:intro_ms}.

If $\rank(\Sigma) < \infty$, and we choose $d = \rank(\Sigma)$,
we recover a version (more general in terms of $\Sigma$ and $\lambda$) of \Cref{thm:finite_subG}.
Hence the sample complexity and the error terms involving $\varepsilon$ are optimal.

The error term involving $\lambda$ is also optimal within constants.
To isolate this term, consider $\varepsilon = 0$;
we then have, for some unit-magnitude $s \in \F$,
\[
	\normlt{\xhat - x_* s}
	\lesssim \sqrt{\lambda} \norm{x_*}.
\]
An error term of this form is standard even in ordinary linear Hilbert space regression,
and it is optimal in some cases.
The requirement on $\lambda$ in terms of the eigenvalues is also similar to conditions appearing in existing results for infinite-dimensional linear regression. See, for example, \cite{Hsu2014,McRae2020} for some relevant results and further reading.

\subsection{Proofs}
\label{sec:stats_proofs}
We first prove the somewhat simpler \Cref{thm:finite_subG}.
Many of the techniques and necessary intermediate results will carry over to the proof of \Cref{thm:infdim}.

First, we state a standard result on the concentration of the empirical covariance of isotropic sub-Gaussian random vectors.
We say a zero-mean random vector $f \in \C^d$ (this covers the real case as well) is isotropic and $K$--sub-Gaussian (for some $K > 0$) if, for all unit-norm $x \in \C^d$, $\ip{f}{x}$ is unit-variance (i.e., $\E\abs{\ip{f}{x}}^2 = 1$) and $K$--sub-Gaussian, that is, $\E e^{\abs{\ip{f}{x}}^2/K^2} \leq 2$.
Note that this is, in particular, true under the conditions of \Cref{thm:finite_subG}.
\begin{lemma}[e.g., {\cite[Thm.~1]{Koltchinskii2017}}]
	\label{lem:subG_conc}
	Let $f_1, \dots, f_n$ be independent zero-mean isotropic $K$--sub-Gaussian vectors
	in $\C^d$.
	There exists a constant $c > 0$ depending only on $K$ such that, for all $t \geq 1$, with probability at least $1 - e^{-t}$,
	\[
		\opnorm*{\frac{1}{n} \sum_{i=1}^n f_i f_i^* - I_d }
		\leq c \parens*{\sqrt{\frac{d}{n}} + \frac{d}{n} + \sqrt{\frac{t}{n}} + \frac{t}{n} }.
	\]
\end{lemma}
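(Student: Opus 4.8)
This is a well-known concentration fact, and the version with precisely the stated rate and $K$-dependent constant is exactly \cite[Thm.~1]{Koltchinskii2017}; I sketch the classical $\varepsilon$-net proof for completeness. First, since $\frac1n\sum_{i=1}^n f_i f_i^* - I_d$ is Hermitian, its operator norm equals $\sup_{x \in \mathbb{S}} \bigl| \frac1n \sum_{i=1}^n \abs{\ip{f_i}{x}}^2 - 1 \bigr|$, where $\mathbb{S}$ is the unit sphere of $\C^d$. A standard discretization lemma lets us replace this supremum by $2\max_{x \in \mathcal{N}} |\cdots|$ over a $\tfrac14$-net $\mathcal{N}$ of $\mathbb{S}$, which, viewing $\C^d$ as $\R^{2d}$, can be chosen with $\abs{\mathcal{N}} \le 9^{2d}$.

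Second, I would fix $x \in \mathcal{N}$ and apply a scalar Bernstein inequality. The variables $Z_i \coloneqq \abs{\ip{f_i}{x}}^2$ are i.i.d., nonnegative, with $\E Z_i = 1$ by isotropy, and, since $\ip{f_i}{x}$ is $K$--sub-Gaussian, each $Z_i - 1$ is sub-exponential with norm bounded by a constant multiple of $K^2$. Bernstein's inequality for sums of i.i.d.\ sub-exponential variables then gives, for a universal $c' > 0$ and all $s > 0$,
\[
	\Pr\Bigl[ \Bigl| \tfrac1n \textstyle\sum_{i=1}^n Z_i - 1 \Bigr| > s \Bigr] \le 2 \exp\bigl( -c'\, n \min\{ s^2/K^4,\ s/K^2 \} \bigr).
\]

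Third, I would union-bound this over the $\le 9^{2d}$ points of $\mathcal{N}$ and choose $s$ so that $c'\,n \min\{s^2/K^4, s/K^2\}$ exceeds $2d\log 9 + t + \log 2$; solving for $s$ yields $s \le C(K)\bigl(\sqrt{(d+t)/n} + (d+t)/n\bigr)$, and absorbing the net factor $2$ and using $\sqrt{a+b}\le\sqrt a+\sqrt b$ gives the claimed bound. The only steps requiring genuine care are the sub-exponential norm estimate for $\abs{\ip{f_i}{x}}^2 - 1$ in terms of $K$ (the main technical point) and the bookkeeping that passing from $\C^d$ to $\R^{2d}$ costs only a constant factor in the exponent; for the sharp constants as stated one simply invokes \cite{Koltchinskii2017} directly rather than the crude net argument above.
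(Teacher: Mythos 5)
Your sketch is correct: the $\tfrac14$-net discretization (with the factor-$2$ bound for Hermitian matrices and $|\mathcal{N}|\le 9^{2d}$ after identifying $\C^d$ with $\R^{2d}$), the observation that $\abs{\ip{f_i}{x}}^2-1$ is sub-exponential with $\psi_1$-norm $\lesssim K^2$, Bernstein, and the union bound with $s$ chosen against $2d\log 9 + t + \log 2$ do yield exactly the stated rate. The paper itself offers no proof of this lemma and simply invokes the cited reference, so your self-contained argument is, if anything, more than what the paper provides; no gaps.
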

In particular, choosing $t = 2 \log n$,
for constants $c_1, c_2 > 0$ depending only on $K$,
if $n \geq c_1 d$, with probability at least $1 - n^{-2}$,
\begin{align*}
	\opnorm*{\frac{1}{n} \sum_{i=1}^n \real(f_i f_i^*) - I_d }
	&\leq \opnorm*{\frac{1}{n} \sum_{i=1}^n f_i f_i^* - I_d } \\
	&\leq c_2 \sqrt{\frac{d + \log n}{n}}.
\end{align*}

Next, the following result will be essential:
\begin{lemma}[{\cite{Krahmer2020}, as stated in \cite[Lem.~4]{McRae2026a}}]
	\label{lem:subG_lb}
	Under the conditions of \Cref{thm:finite_subG},
	there exist constants $c_1, c_2, c_3, c_4 > 0$ depending only on the properties of $\crdvar$ such that,
	for $n \geq c_1 d$,
	with probability at least $1 - c_2 e^{-c_3 n}$, for all $Z \succeq 0$,
	\[
		\frac{1}{n} \sum_{i=1}^n \abs{ \ip{A_i}{Z - Z_*} }
		\geq c_4 \nucnorm{Z - x_* x_*^*}.
	\]
\end{lemma}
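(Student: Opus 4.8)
The plan is to establish this uniform lower bound by Mendelson's small-ball method, exploiting the fact that, because $Z \succeq 0$, the difference $Z - Z_*$ has a very restricted negative part.

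\emph{Step 1: reduction to a normalized family of low-negative-rank directions.} Note that $\ip{A_i}{Z - Z_*} = f_i^*(Z - Z_*)f_i \in \mathbb{R}$ and that the desired inequality is $1$-homogeneous in $H := Z - Z_*$. It therefore suffices to prove $\inf_{H \in \mathcal K}\frac1n\sum_{i=1}^n\abs{\ip{A_i}{H}} \ge c_4$, where $\mathcal K$ is the set of normalized directions $H$ (so $\nucnorm{H} = 1$) of the form $H = (Z - Z_*)/\nucnorm{Z - Z_*}$ for some $Z \succeq 0$. Writing the Jordan decomposition $H = P - N$ with $P, N \succeq 0$ of orthogonal ranges ($PN = 0$), Weyl's inequality applied to $Z = Z_* + \nucnorm{Z - Z_*}\,H \succeq 0$ together with $\rank Z_* = 1$ forces $N$ to have rank at most one; moreover, testing the defining constraint against the unit eigenvector of $N$ controls the direction and magnitude of $N$ relative to $x_*$, so that in particular, when $x_*$ is incoherent, the negative part cannot be concentrated on a single coordinate while $P$ lives elsewhere. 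These structural facts are exactly the ones isolated in \cite{Krahmer2020}.

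\emph{Step 2: marginal small-ball bound -- the crux.} The heart of the argument is to show that there exist $\xi, \eta > 0$ depending only on the law of $\crdvar$ (through $K$, $\E\abs{\crdvar}^4$, and the incoherence constant $\incohp$) such that $\mathbb{P}(\abs{\ip{A_1}{H}} \ge \xi) \ge 2\eta$ for every $H \in \mathcal K$. Writing $\ip{A_1}{H} = f^*Pf - f^*Nf =: A - B$ with $A, B \ge 0$, $\E A = \tr P$, $\E B = \tr N$, $\E A + \E B = 1$, so that $\E(A - B) = \tr P - \tr N$: when $\abs{\tr P - \tr N}$ is bounded away from $0$, the bound follows from a one-sided Chebyshev/Paley--Zygmund estimate, using $\E[(A - B)^2] \le C$ from sub-Gaussianity (and $\norm{P} \le \tr P$ for the positive part). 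The delicate regime is $\tr N \approx \tr P \approx \tfrac12$, where $A$ and $B$ nearly cancel in mean; there one needs $\operatorname{Var}(A - B)$ bounded below, which comes down to a lower bound on $\operatorname{Var}(f^*Nf)$ together with control of $\operatorname{Cov}(A, B)$ via the orthogonality $PN = 0$. This is precisely where the hypothesis ``$\E\abs{\crdvar}^4 > 1$ or $x_*$ incoherent'' is indispensable and cannot be weakened: e.g.\ for real $\pm1$ measurements with $x_*$ a coordinate vector, $f^*Nf$ can be identically constant and the claim is false. The required second-moment estimates in this regime are those of \cite{Krahmer2020}.

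\emph{Step 3: uniformization via the small-ball principle.} With the marginal bound in hand, Mendelson's small-ball method (see, e.g., \cite{Krahmer2020} and references therein) gives, with probability at least $1 - e^{-c\eta^2 n}$,
\[
	\inf_{H \in \mathcal K}\frac1n\sum_{i=1}^n\abs{\ip{A_i}{H}} \ \ge\ \xi\eta \ -\ C\,\E\sup_{H \in \mathcal K}\biggl|\frac1n\sum_{i=1}^n r_i\,\ip{A_i}{H}\biggr|,
\]
where $r_1, \dots, r_n$ are independent Rademacher signs. Since $\nucnorm{H} \le 1$ on $\mathcal K$ and the operator and nuclear norms are dual, the supremum equals $\opnorm*{\frac1n\sum_{i=1}^n r_i f_if_i^*}$; by symmetrization and \Cref{lem:subG_conc}, its expectation is $\lesssim \sqrt{d/n}$ once $n \gtrsim d$. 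Choosing the constant $c_1$ large enough that this Rademacher term is at most $\xi\eta/2$ then yields the claim with $c_4 = \xi\eta/2$, after absorbing $e^{-c\eta^2 n}$ into $c_2 e^{-c_3 n}$ and adjusting constants. The technical core, and the step I expect to be the main obstacle, is the marginal small-ball bound of Step 2 in the near-cancellation regime $\tr N \approx \tr P$: ruling out that the (possibly high-rank) positive part $f^*Pf$ and the rank-$\le 1$ negative part $f^*Nf$ align too often, which forces the use of the fourth-moment/incoherence hypotheses and a careful treatment of the correlation between $f^*Pf$ and $f^*Nf$, a correlation that does not vanish for non-Gaussian $f$ even when $PN = 0$.
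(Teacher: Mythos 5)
The paper does not prove this lemma at all: it is imported verbatim from \cite{Krahmer2020} (as restated in \cite{McRae2025preprintb}), so there is no in-paper argument to compare your proposal against. Your outline does correctly reconstruct the strategy of the cited proof. Step 1 is sound: since $Z \succeq 0$ and $\rank Z_* = 1$, Weyl's inequality gives $\lambda_{d-1}(Z - Z_*) \geq \lambda_d(Z) \geq 0$, so the negative part $N$ has rank at most one, and testing $u^* Z u \geq 0$ against the unit eigenvector $u$ of $N$ (which satisfies $Pu = 0$) bounds $\tr N \leq \abs{\ip{x_*}{u}}^2$. Step 3 is also standard: the Rademacher complexity of the nuclear-norm ball under $H \mapsto \ip{A_i}{H}$ is $\opnorm{\frac1n\sum_i r_i f_i f_i^*}$ by duality, and this is $\lesssim \sqrt{d/n}$ for sub-Gaussian $f_i$, so $n \gtrsim d$ suffices.

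The genuine gap is that Step 2 --- the uniform marginal small-ball estimate $\mathbb{P}(\abs{\ip{A_1}{H}} \geq \xi) \geq 2\eta$ over the whole constraint set --- is where essentially all of the mathematical content of the lemma lives, and you assert it rather than prove it, explicitly deferring the second-moment and anti-concentration estimates in the near-cancellation regime $\tr P \approx \tr N$ to \cite{Krahmer2020}. You correctly identify why the hypotheses $\E\abs{\crdvar}^4 > 1$ or incoherence are needed (the Rademacher/coordinate-vector counterexample is the right one), and you correctly flag that $\operatorname{Cov}(f^*Pf, f^*Nf)$ does not vanish for non-Gaussian entries even when $PN = 0$; but you do not carry out the argument, nor do you address where the complex-case hypothesis $\abs{\E \crdvar^2} < 1$ enters (it is needed so that $\real(\ip{f}{u}\overline{\ip{f}{v}})$ is genuinely anti-concentrated for the complex quadratic form). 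As a self-contained proof the proposal is therefore incomplete at its crux; as a roadmap to the cited reference it is accurate, and it matches the paper's own treatment, which is simply to cite the result.
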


The following two technical lemmas allow us to use the previous lemma in our framework:
\begin{lemma}
	\label{lem:ab_ineq}
	For all $X_1 \in \F^{d \times r_1}$, $X_2 \in \F^{d \times r_2}$,
	\[
	\frac{1}{n} \norm{\alpha(X_1) - \alpha(X_2)}^2 \geq \frac{\norm{\beta(X_1) - \beta(X_2)}_1^2}{n \norm{\alpha(X_1) + \alpha(X_2)}^2}.
	\]
\end{lemma}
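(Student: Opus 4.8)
The plan is to reduce this to a one-line Cauchy--Schwarz estimate after exploiting the elementwise identity $\beta(X) = \alpha(X)^2$. First I would note that for each index $i$,
\[
	\beta(X_1)_i - \beta(X_2)_i = \alpha(X_1)_i^2 - \alpha(X_2)_i^2 = \bigl(\alpha(X_1)_i - \alpha(X_2)_i\bigr)\bigl(\alpha(X_1)_i + \alpha(X_2)_i\bigr),
\]
so that, since all entries of $\alpha(X_1), \alpha(X_2)$ are nonnegative,
\[
	\norm{\beta(X_1) - \beta(X_2)}_1 = \sum_{i=1}^n \abs{\alpha(X_1)_i - \alpha(X_2)_i}\,\bigl(\alpha(X_1)_i + \alpha(X_2)_i\bigr).
\]

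Next I would apply the Cauchy--Schwarz inequality to this sum, viewing it as $\ip{a}{b}$ with $a_i = \abs{\alpha(X_1)_i - \alpha(X_2)_i}$ and $b_i = \alpha(X_1)_i + \alpha(X_2)_i$, giving
\[
	\norm{\beta(X_1) - \beta(X_2)}_1 \leq \norm{\alpha(X_1) - \alpha(X_2)}\,\norm{\alpha(X_1) + \alpha(X_2)}.
\]
Squaring both sides, dividing by $n\norm{\alpha(X_1) + \alpha(X_2)}^2$, and rearranging yields exactly the claimed inequality.

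Finally I would dispose of the degenerate case $\norm{\alpha(X_1) + \alpha(X_2)} = 0$: since the entries of $\alpha$ are nonnegative, this forces $\alpha(X_1) = \alpha(X_2) = 0$ and hence $\beta(X_1) = \beta(X_2) = 0$, so both sides of the asserted inequality are zero (interpreting the right-hand side as $0$) and the statement holds trivially. I do not anticipate any real obstacle here; the only mild care needed is making sure the nonnegativity of $\alpha$ is invoked so that $\abs{\alpha(X_1)_i + \alpha(X_2)_i} = \alpha(X_1)_i + \alpha(X_2)_i$ and the factorization of the $\ell_1$ norm is valid.
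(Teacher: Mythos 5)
Your proof is correct and takes essentially the same approach as the paper: you apply Cauchy--Schwarz directly after the difference-of-squares factorization, while the paper phrases the key step as Jensen's inequality applied to the jointly convex function $(x,y)\mapsto x^2/y$, but explicitly remarks that ``Cauchy--Schwartz would also work,'' so the two arguments are interchangeable.
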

\begin{proof}
	Convexity of the function $(x, y) \mapsto \frac{x^2}{y}$ and Jensen's inequality (Cauchy-Schwartz would also work) imply
	\begin{align*}
		&\frac{1}{n} \norm{\alpha(X_1) - \alpha(X_2)}^2 \\
		&\quad= \frac{1}{n} \sum_{i=1}^n (\ip{A_i}{X_1 X_1^*}^{1/2} - \ip{A_i}{X_2 X_2^*}^{1/2} )^2 \\
		&\quad= \frac{1}{n} \sum_{i=1}^n \frac{\abs{\ip{A_i}{X_1 X_1^*} - \ip{A_i}{X_2 X_2^*} }^2}{(\ip{A_i}{X_1 X_1^*}^{1/2} + \ip{A_i}{X_2 X_2^*}^{1/2} )^2} \\
		&\quad\geq \frac{ \parens*{ \frac{1}{n} \sum_{i=1}^n \abs{\ip{A_i}{X_1 X_1^*} - \ip{A_i}{X_2 X_2^*} } }^2 }{\frac{1}{n} \sum_{i=1}^n (\ip{A_i}{X_1 X_1^*}^{1/2} + \ip{A_i}{X_2 X_2^*}^{1/2} )^2} \\
		&\quad= \frac{\norm{\beta(X_1) - \beta(X_2)}_1^2}{n \norm{\alpha(X_1) + \alpha(X_2)}^2}.
	\end{align*}
\end{proof}

\begin{lemma}
	\label{lem:nucnorm_lb}
	Let $x_* \in \F^d$ and $X \in \F^{d \times p}$.
	There is a unit-norm $v \in \F^p$ such that
	\[
		\nucnorm{X X^* - x_* x_*}
		\geq \normF{X X^* - x_* x_*}
		\geq \frac{1}{2 \sqrt{2}} \norm{x_*} \norm{x_* - X v}.
	\]
	In particular, $(X v) v^*$ can be any best rank-1 approximation to $X$ in $\norm{\cdot}$.
\end{lemma}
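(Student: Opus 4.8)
The first inequality is immediate: for any matrix the nuclear norm (the sum of the singular values) dominates the Frobenius norm (their root–sum–of–squares). For the second inequality I would argue as follows; assume $x_* \neq 0$ (otherwise there is nothing to prove) and write $M := X X^* - x_* x_*^*$, a Hermitian matrix. If $X = 0$ the bound is clear, so also assume $X \neq 0$.

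Choose $v \in \F^p$ to be a unit leading right singular vector of $X$ — so that $\norm{X v} = \opnorm{X}$ — and multiply $v$ by a unit scalar so that $\ip{x_*}{X v} \geq 0$ is real; this changes neither $\opnorm X$ nor the rank-one matrix $(X v) v^*$, which is a best rank-one approximation of $X$ in $\norm{\cdot}$, so the ``in particular'' claim will follow. Set $\hat x := X v$. Because $v$ is a leading right singular vector, $\hat x$ is a leading eigenvector of $X X^*$, with $X X^* \hat x = \norm{\hat x}^2 \hat x$ and $\norm{\hat x}^2 = \opnorm{X}^2 = \opnorm{X X^*}$; hence $R := X X^* - \hat x \hat x^* \succeq 0$, $R \hat x = 0$, and $\opnorm{R} \leq \norm{\hat x}^2$.

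The plan is to lower bound $\norm{M}$ by restricting attention to the (at most two-dimensional) subspace spanned by $\hat x$ and $x_*$. Take an orthonormal basis of $\F^d$ whose first vector is $e_1 := \hat x / \norm{\hat x}$ and whose second is a unit vector $e_2$ in the direction of $x_* - \ip{x_*}{e_1} e_1$ (chosen arbitrarily if that vector vanishes), and set $\sigma := \norm{\hat x}$, $\mu := \ip{x_*}{e_1} \geq 0$, $\nu := \norm{x_* - \mu e_1} \geq 0$, so that $x_* = \mu e_1 + \nu e_2$, $\norm{x_*}^2 = \mu^2 + \nu^2$, and $\norm{x_* - \hat x}^2 = (\sigma - \mu)^2 + \nu^2$. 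In this basis $\hat x \hat x^* = \sigma^2 e_1 e_1^*$, the matrix $x_* x_*^*$ is supported on $\operatorname{span}\{e_1, e_2\}$, and $R$ has vanishing first row and column; reading off the top-left $2 \times 2$ block of $M$ and discarding everything else gives
\[
	\norm{M}^2 \geq (\sigma^2 - \mu^2)^2 + 2 \mu^2 \nu^2 + (\rho - \nu^2)^2,
\]
where $\rho := \ip{R e_2}{e_2} \in [0, \sigma^2]$; minimizing over this range of $\rho$ leaves
\[
	\norm{M}^2 \geq (\sigma^2 - \mu^2)^2 + 2 \mu^2 \nu^2 + \bigl( \max\{0,\ \nu^2 - \sigma^2\} \bigr)^2 .
\]

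It then remains to verify the purely scalar inequality
\[
	8 \bigl[ (\sigma^2 - \mu^2)^2 + 2 \mu^2 \nu^2 + \max\{0, \nu^2 - \sigma^2\}^2 \bigr] \geq (\mu^2 + \nu^2) \bigl( (\sigma - \mu)^2 + \nu^2 \bigr)
\]
for all $\sigma, \mu, \nu \geq 0$, which is exactly the square of the asserted bound. Writing $(\sigma^2 - \mu^2)^2 = (\sigma - \mu)^2 (\sigma + \mu)^2$ and splitting into a few cases according to the relative sizes of $\sigma$, $\mu$, $\nu$ makes this elementary: the delicate regime is $X X^* \approx x_* x_*^*$, i.e.\ $\mu \approx \sigma$ with $\nu$ small, where both sides are comparable to $\bigl( (\sigma - \mu)^2 + \nu^2 \bigr) \sigma^2$ and one must track the rates; everywhere else there is a wide margin (the constant $\tfrac{1}{2 \sqrt 2}$ is far from optimal, so crude bounds suffice). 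This last scalar case check, though routine, is the only part requiring genuine care, and is where I expect essentially all of the bookkeeping.
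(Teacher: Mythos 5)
Your argument is correct in outline but takes a genuinely different route from the paper's, and it leaves its one nontrivial step unverified. The paper never passes to coordinates: with the same choice of $v$ (leading right singular vector, phase-normalized so that $\ip{Xv}{x_*}\ge 0$), it uses the fact that $Xvv^*X^*$ is a best rank-one approximation of $XX^*$ to write $0 \le \normF{XX^*-x_*x_*^*}^2 - \normF{XX^*-Xvv^*X^*}^2 = -\normF{Xvv^*X^*-x_*x_*^*}^2 + 2\ip{Xvv^*X^*-x_*x_*^*}{XX^*-x_*x_*^*}$, and Cauchy--Schwarz then gives $\normF{Xvv^*X^*-x_*x_*^*} \le 2\normF{XX^*-x_*x_*^*}$; the remaining rank-$\le 2$ quantity factors cleanly as $\norm{Xv}^4+\norm{x_*}^4-2\ip{Xv}{x_*}^2 \ge \tfrac12\norm{Xv+x_*}^2\norm{Xv-x_*}^2 \ge \tfrac12\norm{x_*}^2\norm{x_*-Xv}^2$, which is where the phase normalization is used. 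That is a three-line, case-free proof. Your $2\times 2$-block reduction is sound (the entries you read off are correct because $R(Xv)=0$ and $R=R^*$, and $\rho\in[0,\sigma^2]$ is right), but it funnels everything into the quartic inequality in $(\sigma,\mu,\nu)$ that you merely assert. That inequality is in fact true: setting $s=\nu^2$ and using $(\sigma^2-\mu^2)^2=(\sigma-\mu)^2(\sigma+\mu)^2$, the difference of the two sides rearranges to $(\sigma-\mu)^2\bigl[8(\sigma+\mu)^2-\mu^2-s\bigr] + s(15\mu^2-s) + 8\max\{0,s-\sigma^2\}^2$, and the possibly negative contributions (from $s>15\mu^2$ and, when $s>8(\sigma+\mu)^2-\mu^2$, from the first bracket) are absorbed by the other terms after a short case split on $s\lessgtr\sigma^2$. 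So your proof can be completed, with the same constant; but as written the deferred scalar verification is exactly the missing content, and the paper's optimal-rank-one-approximation trick is what lets it avoid that bookkeeping entirely.
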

\begin{proof}
	Let $v \in \F^p$ be a unit-norm leading right singular vector of $X$,
	which is equivalent to $(X v) v^*$ being a rank-1 projection of $X$ in $\norm{\cdot}$.
	We can furthermore choose $v$ (multiplying by a unit-magnitude number if needed) such that $\ip{Xv}{x_*} \geq 0$.
	As $X v v^* X^*$ is an optimal rank-1 approximation to $X X^*$,
	\begin{align*}
		0 &\leq \normF{X X^* - x_* x_*^*}^2 - \normF{X X^* - X v v^* X^*}^2 \\
		&= - \normF{X v v^* X^* - x_* x_*^*}^2 \\
		&\quad + 2 \ip{X v v^* X^* - x_* x_*^*}{X X^* - x_* x_*^*} \\
		&\leq - \normF{X v v^* X^* - x_* x_*^*}^2 \\
		&\quad + 2 \normF{X v v^* X^* - x_* x_*^*} \normF{X X^* - x_* x_*^*}.
	\end{align*}
	Hence
	\[
		\normF{X v v^* X^* - x_* x_*^*} \leq 2 \normF{X X^* - x_* x_*^*}.
	\]
	Furthermore,
	\begin{align*}
		\normF{X v v^* X^* - x_* x_*^*}^2
		&= \norm{Xv}^4 + \norm{x_*}^4 - 2 \ip{Xv}{x_*}^2 \\
		&\geq \frac{(\norm{Xv}^2 + \norm{x_*}^2)^2}{2} - 2 \ip{Xv}{x_*}^2 \\
		&= \frac{1}{2} \norm{Xv + x_*}^2 \norm{Xv - x_*}^2 \\
		&\geq \frac{1}{2} \norm{x_*}^2 \norm{x_* - Xv}^2.
	\end{align*}
	Combining this with the previous bound completes the proof.
\end{proof}

Next, the following bound will prove useful:
\begin{lemma}
	\label{lem:a_ineq}
	Consider the model \eqref{eq:gen_model_sqrt}.
	For any $\lambda \geq 0$,
	any $X$ that is a first-order critical point of \eqref{eq:opt_ncvx_sqrt},
	or $X = \recmap U$ for any feasible $U$ of \eqref{eq:phasecut_gen},
	satisfies
	\[
		\frac{1}{n} \norm{\alpha(X)}^2 + \lambda \norm{X}^2 \leq \frac{1}{n} \norm{y}^2 \leq \frac{1}{n} (\norm{\alpha(X_*)} + \norm{\varepsilon})^2.
	\]
\end{lemma}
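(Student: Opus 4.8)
The plan is to split the statement into its two inequalities and, for the first one, into the two classes of point it covers. The second inequality is immediate: since $y = \alpha(X_*) + \varepsilon$, the triangle inequality gives $\norm{y} \leq \norm{\alpha(X_*)} + \norm{\varepsilon}$, and squaring (both sides being nonnegative) and dividing by $n$ yields $\frac{1}{n}\norm{y}^2 \leq \frac{1}{n}(\norm{\alpha(X_*)} + \norm{\varepsilon})^2$. So all the work is in the first inequality, and the strategy there is to turn the relevant stationarity/feasibility condition into an \emph{equality} for $\frac{1}{n}\norm{\alpha(X)}^2 + \lambda\norm{X}^2$ and then read off the bound.

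For the first inequality when $X$ is a first-order critical point of \eqref{eq:opt_ncvx_sqrt}, I would first check, by an argument parallel to the one in the proof of \Cref{lem:socp_char} (if $\ip{A_i}{X X^*} = 0$ while $y_i > 0$, a perturbation increasing $\ip{A_i}{X X^*}$ decreases the objective with a strictly negative one-sided directional derivative, contradicting first-order criticality), that $\ell'(\ip{A_i}{X X^*}, y_i)$ is finite for every $i$; hence $\nabla L_\lambda(X X^*)$ is well-defined and first-order criticality is exactly $\nabla L_\lambda(X X^*) X = 0$. Taking the Frobenius inner product of this with $X$ and substituting $\ell'(b,\upsilon) = 1 - \upsilon/\sqrt{b}$, so that $\ell'(\ip{A_i}{X X^*}, y_i)\ip{A_i}{X X^*} = \alpha(X)_i^2 - y_i\alpha(X)_i$ (both sides vanishing when $\alpha(X)_i = 0$), collapses the stationarity condition to
\[
	\frac{1}{n}\norm{\alpha(X)}^2 + \lambda\norm{X}^2 = \frac{1}{n}\ip{\alpha(X)}{y}.
\]
Cauchy--Schwarz bounds the right-hand side by $\frac{1}{n}\norm{\alpha(X)}\,\norm{y}$; comparing with the left-hand side forces $\norm{\alpha(X)} \leq \norm{y}$, and feeding this back in gives $\frac{1}{n}\ip{\alpha(X)}{y} \leq \frac{1}{n}\norm{\alpha(X)}\,\norm{y} \leq \frac{1}{n}\norm{y}^2$, which is the claim.

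For the PhaseCut case, with $X = \recmap U$ for a feasible $U$ of \eqref{eq:phasecut_gen} (so $\diag(U U^*) = \ones$, whence $\norm{\diag(y) U}^2 = \sum_i y_i^2 = \norm{y}^2$), I would apply the quadratic identity \eqref{eq:quad_id} with $W = U$ to obtain $\ip{M_\lambda}{U U^*} = \frac{1}{n}\norm{y}^2 - \frac{1}{n}\norm{F X}^2 - \lambda\norm{X}^2$; since $M_\lambda \succeq 0$ (for $\lambda > 0$ from its explicit form, and $M_0 = \frac{1}{n}\diag(y)(I_n - F F^\dagger)\diag(y) \succeq 0$) and $U U^* \succeq 0$, the left-hand side is nonnegative, so $\frac{1}{n}\norm{F X}^2 + \lambda\norm{X}^2 \leq \frac{1}{n}\norm{y}^2$, and $\norm{F X}^2 = \sum_i \norm{(F X)_i}^2 = \sum_i \ip{A_i}{X X^*} = \norm{\alpha(X)}^2$ (using $A_i = f_i f_i^*$ in this case) completes the proof. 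I do not expect a genuine obstacle: the only step needing a little care is the finiteness of $\ell'(\ip{A_i}{X X^*}, y_i)$ and the resulting stationarity equation $\nabla L_\lambda(X X^*) X = 0$ at a first-order critical point, which is a routine adaptation of part of the proof of \Cref{lem:socp_char}; everything else is direct computation.
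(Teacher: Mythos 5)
Your proposal is correct and follows essentially the same route as the paper: the stationarity identity $\frac{1}{n}\norm{\alpha(X)}^2 + \lambda\norm{X}^2 = \frac{1}{n}\ip{y}{\alpha(X)}$ plus Cauchy--Schwarz for the critical-point case, and the identity \eqref{eq:quad_id} with $W=U$ together with $M_\lambda \succeq 0$ and $\diag(UU^*)=\ones$ for the PhaseCut case. The only (immaterial) differences are your slightly different final algebra after Cauchy--Schwarz and your added care about finiteness of $\ell'$ at a first-order critical point, which the paper handles by reference to \Cref{lem:socp_char}.
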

\begin{proof}
	First, we consider a first-order critical point $X$ of \eqref{eq:opt_ncvx_sqrt} (see \Cref{lem:socp_char}).
	The condition $\nabla L_\lambda(X X^*) X = 0$ implies
	\begin{align*}
		0 &= \ip{ \nabla L_\lambda(X X^*) }{X X^*} \\
		&= \frac{1}{n} \sum_{i=1}^n ( \ip{A_i}{X X^*} - y_i \ip{A_i}{X X^*}^{1/2} ) + \lambda \norm{X}^2 \\
		&= \frac{1}{n} \norm{\alpha(X)}^2 - \frac{1}{n} \ip{y}{\alpha(X)} + \lambda \norm{X}^2.
	\end{align*}
	This implies
	\begin{align*}
		\frac{1}{n} \norm{\alpha(X)}^2 + \lambda \norm{X}^2
		&\leq \frac{1}{n} \norm{y} \norm{\alpha(X)} \\
		&\leq \frac{1}{\sqrt{n}} \norm{y} \sqrt{\frac{1}{n} \norm{\alpha(X)}^2 + \lambda \norm{X}^2 },
	\end{align*}
	from which the claimed bound follows.
	
	Second, we consider $X = \recmap U$, where $U$ is feasible for \eqref{eq:phasecut_gen}.
	The identity \eqref{eq:quad_id} implies
	\begin{align*}
		\frac{1}{n} \norm{\alpha(X)}^2 + \lambda \norm{X}^2
		&= \frac{1}{n} \norm{F X}^2 + \lambda \norm{X}^2 \\
		&= \frac{1}{n} \norm{\diag(y) U}^2 - \ip{M_\lambda U}{U} \\
		&\leq \frac{1}{n} \norm{y}^2,
	\end{align*}
	where the inequality is due to $M_\lambda \succeq 0$.
\end{proof}

With these, we can prove the main finite-dimensional statistical result:
\begin{proof}[Proof of \Cref{thm:finite_subG}]
	Throughout the proof, we use $c$, $c'$, etc.\ to denote positive constants which may change from one usage to another but which do not depend on the problem parameters $n$ and $d$ (though, as discussed in the theorem statement, they may depend on properties of $\crdvar$).
	
	Assuming $n \geq c d$, we can use \Cref{lem:subG_conc,lem:subG_lb} (combining and simplifying the failure probabilities) to obtain, with probability at least $1 - c n^{-2}$, for all $r' \geq 1$,
	\begin{gather}
		\label{eq:alphanorm_conc}
		c \norm{X'}^2 \leq \frac{1}{n} \norm{\alpha(X')}^2 \leq c' \norm{X'}^2 \quad \forall X' \in \F^{d \times r'},
	\end{gather}
	and
	\begin{gather}
		\label{eq:betal1_ineq}
		\frac{1}{n} \norm{\beta(X) - \beta(x_*)}_1
		\geq c \nucnorm{X X^* - x_* x_*^*} \quad \forall X \in \F^{d \times p}.
	\end{gather}
	From now on, we assume this event holds.
	
	We divide the rest of the proof into two cases depending on the norm of $\varepsilon$.
	First, in the (trivial) case that $\norm{\varepsilon}^2 \geq n \norm{x_*}^2$,
	for $X$ as in the theorem statement, \Cref{lem:a_ineq} and \eqref{eq:alphanorm_conc} give
	\begin{align*}
		\nucnorm{X X^* - x_* x_*^*}
		&\leq \norm{X}^2 + \norm{x_*}^2 \\
		&\leq \frac{c}{n}\norm{\alpha(X)}^2 + \norm{x_*}^2  \\
		&\leq \frac{c}{n}( \norm{\varepsilon}^2 + \norm{\alpha(x_*)}^2) + \norm{x_*}^2 \\
		&\leq c \parens*{ \frac{\norm{\varepsilon}^2}{n} + \norm{x_*}^2 } \\
		&\leq c\frac{\norm{\varepsilon}^2}{n},
	\end{align*}
	and, for $\xhat$ as in the theorem statement,
	\begin{align*}
		\norm{\xhat - x_*}
		&\leq \norm{\xhat} + \norm{x_*} \\
		&\leq \norm{X} + \norm{x_*} \\
		&\leq c \frac{\norm{\varepsilon}}{\sqrt{n}}.
	\end{align*}
	Thus, from now on, assume $\norm{\varepsilon}^2 \leq n \norm{x_*}^2 \leq c \norm{\alpha(x_*)}^2$.
	Together with (again) \Cref{lem:a_ineq} and \eqref{eq:alphanorm_conc},
	we obtain
	\begin{align*}
		\norm{\alpha(X) + \alpha(x_*)}
		&\leq \norm{\alpha(X)} + \norm{\alpha(x_*)} \\
		&\leq \norm{\varepsilon} + 2 \norm{\alpha(x_*)} \\
		&\leq c \sqrt{n} \norm{x_*}.
	\end{align*}
	Together with this last bound,
	\Cref{lem:ab_ineq} and \eqref{eq:betal1_ineq} give
	\begin{align*}
		\frac{1}{n} \norm{\alpha(X) - \alpha(x_*)}^2
		&\geq \frac{\norm{\beta(X) - \beta(x_*)}_1^2}{n \norm{\alpha(X) + \alpha(x_*)}^2} \\
		&\geq \frac{c}{\norm{x_*}^2} \nucnorm{X X^* - x_* x_*^*}^2.
	\end{align*}
	By \Cref{lem:nucnorm_lb}, there is $v \in \F^p$ such that
	\[
		\nucnorm{X X^* - x_* x_*^*}^2 \geq c \norm{x_*}^2 \norm{x_* - Xv}^2.
	\]
	We therefore have
	\begin{equation}
		\label{eq:alpha_err_lb}
		\frac{1}{n} \norm{\alpha(X) - \alpha(x_*)}^2
		\geq \frac{c}{\norm{x_*}^2} \nucnorm{X X^* - x_* x_*^*}^2 + c  \norm{x_* - Xv}^2.
	\end{equation}
	To finish, we first consider the problem \eqref{eq:opt_ncvx_sqrt}
	and then make suitable modifications for the PhaseCut approach of \eqref{eq:phasecut_gen}.
	If $X$ is a second-order critical point of \eqref{eq:opt_ncvx_sqrt},
	\Cref{thm:landscape_sqrt} gives
	\begin{align*}
		&\frac{1}{n} \norm{\alpha(X) - \alpha(x_*)}^2 \\
		&\quad\leq \frac{2}{n} \ip{\varepsilon}{\alpha(X) - \alpha(x_*)} + \frac{1}{\constF p - 1} \cdot \frac{1}{n} \norm{\alpha(x_* - Xv)}^2 \\
		&\quad\leq \frac{2}{n} \norm{\varepsilon} \norm{\alpha(X) - \alpha(x_*)} + \frac{c}{pn} \norm{\alpha(x_* - Xv)}^2.
	\end{align*}
	Some algebra together with \eqref{eq:alphanorm_conc} then implies
	\begin{equation}
		\label{eq:inter_ineq_sqrt}
		\begin{aligned}
			\frac{1}{n} \norm{\alpha(X) - \alpha(x_*)}^2
			&\leq c \frac{\norm{\varepsilon}^2}{n} + \frac{c'}{pn } \norm{\alpha(x_* - Xv)}^2 \\
			&\leq c \frac{\norm{\varepsilon}^2}{n} + \frac{c'}{p} \norm{x_* - Xv}^2.
		\end{aligned}
	\end{equation}
	If $p \geq c$,
	combining \eqref{eq:alpha_err_lb} with \eqref{eq:inter_ineq_sqrt} gives
	\[
		\frac{\nucnorm{X X^* - x_* x_*^*}^2}{\norm{x_*}^2}  \leq c \frac{\norm{\varepsilon}^2}{n},
	\]
	which implies the claimed bound on $\nucnorm{X X^* - x_* x_*^*}$.
	The bound on $\min_{\abs{s} = 1}~\norm{\xhat - s x_*}$ also follows by \Cref{lem:nucnorm_lb}.
	
	Now, suppose $X = \recmap U$, where $U$ is a second-order critical point of \eqref{eq:phasecut_gen}.
	\Cref{thm:landscape_phasecut} and some algebra give, similarly to \eqref{eq:inter_ineq_sqrt},
	\begin{align*}
		\frac{1}{n} \norm{\alpha(X) - \alpha(x_*)}^2
		&\leq c \frac{\norm{\varepsilon}^2}{n} + \frac{c'}{p n} \norm{F(X_\lambda - Xv)}^2 \\
		&\leq c \frac{\norm{\varepsilon}^2}{n} + \frac{c'}{p n} \norm{F(x_* - Xv)}^2 \\
		&\leq c \frac{\norm{\varepsilon}^2}{n} + \frac{c'}{p} \norm{x_* - Xv}^2,
	\end{align*}
	where the second inequality uses \eqref{eq:Xlam_errbd} (with $\lambda = 0$),
	and $v$ is chosen to be the same as in the previous case.
	This is identical (within constants) to \eqref{eq:inter_ineq_sqrt};
	the result follows similarly.
\end{proof}

We now turn to the proof of the infinite-dimensional result \Cref{thm:infdim}.
\Cref{thm:landscape_sqrt,thm:landscape_phasecut} and the intermediate technical results \Cref{lem:ab_ineq,lem:a_ineq} adapt in obvious ways with $\F^d$ replaced by $\scrH$.
We will also use \Cref{lem:subG_lb,lem:subG_conc,lem:nucnorm_lb} again but only on a finite-dimensional subspace of $\scrH$, so no adaptation is necessary.

In addition, the following infinite-dimensional Gaussian counterpart to \Cref{lem:subG_conc} will be useful:
\begin{lemma}[{\cite[Cor.~2]{Koltchinskii2017}}]
	\label{lem:infdim_gauss_conc}
	Let $z_1, \dots, z_n$ be i.i.d.\ Gaussian random vectors with covariance $\Sigma$ in a Hilbert space $\scrH$.
	Let
	\[
		r(\Sigma) \coloneqq \frac{\tr \Sigma}{\opnorm{\Sigma}}.
	\]
	Let
	\[
		\Sigmahat \coloneqq \frac{1}{n} \sum_{i=1}^n z_i^{\otimes 2}
	\]
	be the sample covariance.
	There is a universal constant $c > 0$ such that,
	for any $t \geq 1$, with probability at least $1 - e^{-t}$,
	\[
		\opnorm{\Sigmahat - \Sigma}
		\leq c \opnorm{\Sigma}\parens*{ \sqrt{\frac{r(\Sigma)}{n}} + \frac{r(\Sigma)}{n} + \sqrt{\frac{t}{n}} + \frac{t}{n} }.
	\]	
\end{lemma}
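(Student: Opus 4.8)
Since the statement is quoted verbatim from \cite{Koltchinskii2017}, the paper simply cites it; the plan below is how I would reconstruct it. The overall strategy is to (i) bound the expectation $\E\opnorm{\Sigmahat - \Sigma}$ by $\opnorm\Sigma\bigl(\sqrt{r(\Sigma)/n} + r(\Sigma)/n\bigr)$ up to a universal constant, and then (ii) upgrade this to the stated high-probability tail via a concentration inequality for suprema of (unbounded, sub-exponential) empirical processes.

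For step (i), I would first rewrite, using self-adjointness, $\opnorm{\Sigmahat - \Sigma} = \sup_{u}\bigl|\tfrac1n\sum_{i=1}^n(\abs{\ip{z_i}{u}}^2 - \normlt{u}^2)\bigr|$, where $u$ ranges over a countable dense subset of the unit ball of $\scrH$ (this sidesteps measurability in infinite dimensions; since $\Sigma$ is trace-class, $\norm{z_i}<\infty$ almost surely, so each summand is finite). For $\norm u\le 1$ each centered summand is sub-exponential with $\psi_1$-norm $\lesssim\normlt{u}^2\le\opnorm\Sigma$. Symmetrizing gives $\E\opnorm{\Sigmahat - \Sigma}\le 2\,\E\sup_u\bigl|\tfrac1n\sum_i\varepsilon_i\abs{\ip{z_i}{u}}^2\bigr| = \tfrac2n\,\E\opnorm{\sum_i\varepsilon_i z_i^{\otimes 2}}$, since $\sup_{\norm u\le 1}\abs{\ip{A}{u^{\otimes2}}} = \opnorm A$ for self-adjoint $A$. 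Conditioning on $(z_i)_i$, the inner expectation is the operator norm of a Rademacher sum of rank-one PSD operators; noncommutative Khintchine gives $\E_\varepsilon\opnorm{\sum_i\varepsilon_i z_i^{\otimes2}}\lesssim\sqrt{\log n}\,\bigl(\max_i\norm{z_i}^2\cdot\opnorm{\sum_i z_i^{\otimes 2}}\bigr)^{1/2}$, and taking $\E_z$ with $\E\norm z^2 = \tr\Sigma = \opnorm\Sigma\, r(\Sigma)$ and $\opnorm{\sum_i z_i^{\otimes 2}} = n\opnorm{\Sigmahat}\le n(\opnorm\Sigma + \opnorm{\Sigmahat - \Sigma})$, a short bootstrap on $\E\opnorm{\Sigmahat - \Sigma}$ produces $\E\opnorm{\Sigmahat - \Sigma}\lesssim\opnorm\Sigma\bigl(\sqrt{r(\Sigma)\log n/n} + r(\Sigma)\log n/n\bigr)$. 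The parasitic $\sqrt{\log n}$ is then removed either by running the same argument on higher moments and optimizing, or --- cleanly --- by invoking the dimension-free generic-chaining estimate of \cite{Koltchinskii2017}, which gives exactly $\opnorm\Sigma\bigl(\sqrt{r(\Sigma)/n} + r(\Sigma)/n\bigr)$.

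For step (ii), the functional $(z_i)_i\mapsto\opnorm{\Sigmahat - \Sigma}$ is a supremum of an empirical process with sub-exponential increments and envelope governed by $\max_i\norm{z_i}^2$. Applying a Talagrand/Adamczak-type concentration inequality for such processes (or, alternatively, Gaussian concentration of Lipschitz functionals after conditioning on the large-coordinate part of the $z_i$) yields, with probability at least $1 - e^{-t}$, $\opnorm{\Sigmahat - \Sigma}\lesssim\E\opnorm{\Sigmahat - \Sigma} + \sqrt{\sigma^2 t/n} + Ut/n$, where the weak variance $\sigma^2 = \sup_{\norm u\le 1}\operatorname{Var}(\abs{\ip{z}{u}}^2)\lesssim\opnorm\Sigma^2$ supplies the $\opnorm\Sigma\sqrt{t/n}$ term and the sub-exponential envelope supplies the $\opnorm\Sigma\, t/n$ term. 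Combined with step (i), and absorbing lower-order cross-terms, this is the claimed four-term bound.

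The main obstacle is the expectation bound in step (i): obtaining the \emph{effective rank} $r(\Sigma)$ rather than the ambient (possibly infinite) dimension, and doing so without the spurious $\sqrt{\log n}$. A naive $\epsilon$-net over the unit ball of $\scrH$ is useless here; what is needed is a chaining argument adapted to the ellipsoidal geometry induced by $\Sigma$, which is precisely the technical content of \cite{Koltchinskii2017}. In practice I would not reprove this and would simply cite their Corollary~2, as done above.
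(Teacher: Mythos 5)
The paper offers no proof of this lemma: it is imported verbatim as Corollary~2 of the cited reference, which is exactly what you end up doing as well, so your approach matches the paper's. Your reconstruction sketch is a reasonable outline of how such bounds are proved (though the cited work actually exploits Gaussianity directly via Gaussian concentration and comparison inequalities rather than symmetrization plus noncommutative Khintchine plus Adamczak), and you correctly identify that the dimension-free effective-rank bound without logarithmic factors is the genuinely hard step that justifies citing rather than reproving.
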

In particular, choosing $t = 2\log n$, this implies, with probability at least $1 - n^{-2}$, for another universal constant $c' > 0$,
\[
	\opnorm{\Sigmahat} \leq c' \parens*{\opnorm{\Sigma} + \frac{\tr \Sigma}{n}}.
\]

With this and the tools already presented for the proof of \Cref{thm:finite_subG}, we can proceed to the proof of our infinite-dimensional result:
\newcommand{\dimG}{d_G}
\newcommand{\SigG}{\Sigma_G}
\newcommand{\SigGp}{\Sigma_{G^\perp}}
\begin{proof}[Proof of \Cref{thm:infdim}]
	Let $u_1, \dots, u_d \in \scrH$ be the eigenvectors of $\Sigma$ corresponding to eigenvalues $\sigma_1 \geq \cdots \geq \sigma_d$.
	We set
	\[
		G \coloneqq \spn\{ x_*, u_1, \dots, u_d \}.
	\]
	Note that $\dimG \coloneqq \dim(G)$ is either $d$ or $d+1$.
	Let $G^\perp$ be the orthogonal complement of $G$ in $\scrH$.
	Denote by $\PG$ and $\PGp$ the orthogonal projection operators onto (respectively) $G$ and $G^\perp$.
	
	Note that
	$\SigG \coloneqq \PG \Sigma \PG$ has rank at most $\dimG$, and $\PG f \in \range(\SigG)$ almost surely.
	We then have that $\{\Sigma_G^{-1/2} \PG f_i\}_{i=1}^n$ are i.i.d.\ standard Gaussian (standard complex Gaussian if $\F = \C$---recall that we required $f$ to be circularly symmetric in this case) vectors on $\range(\SigG)$ with respect to any orthonormal (in $L_2$) basis.
	Assuming $n \geq c \dimG$, \Cref{lem:subG_conc,lem:subG_lb} imply that
	\begin{equation*}
		c \SigG \preceq \frac{1}{n} \sum_{i=1}^n (\PG f_i)^{\otimes 2} \preceq c' \SigG 
	\end{equation*}
	and, for all $X \in \scrH^p$,
	\begin{equation}
		\label{eq:PG_lb}
		\frac{1}{n} \norm{\beta( \PG X) - \beta(x_*)}_1
		\geq c \nucnormlt{\PG X X^* \PG - x_* x_*^*}.
	\end{equation}
	Furthermore, \Cref{lem:infdim_gauss_conc} (with covariance $\SigGp$; see also the discussion following that lemma) implies
	\begin{equation}
		\label{eq:PGp_opnorm_delta}
		\opnorm*{\frac{1}{n} \sum_{i=1}^n (\PGp f_i)^{\otimes 2}} \leq c \parens*{\opnorm{\SigGp} + \frac{\tr \SigGp}{n}} \eqqcolon \delta.
	\end{equation}
	Combining the failure probabilities of these events with a union bound,
	these inequalities hold with probability at least $1 - c n^{-2}$.
	From now on, assume these hold.
	In particular,
	for all $X' \in \scrH^{r'}$ (for any $r' \geq 1$),
	we have
	\begin{equation}
		c \normlt{\PG X'}^2 \leq \frac{1}{n} \norm{\alpha(\PG X')}^2 \leq c' \normlt{\PG X'}^2 \label{eq:PG_conc},
	\end{equation}
	and
	\begin{equation}
		\frac{1}{n}\norm{\alpha(\PGp X')}^2 \leq \delta \norm{\PGp(X')}^2
		\leq \delta \norm{X'}^2.
		\label{eq:PGp_ub}
	\end{equation}
	
	Noting that $\opnorm{\SigGp} \leq \sigma_{\tgtdim+1}$ and $\tr \SigGp \leq \sum_{m > \tgtdim} \sigma_m$,
	\eqref{eq:PGp_opnorm_delta} allows us to have $\lambda \geq c \delta$.
	We can then derive the following simple inequality for any $X$ satisfying the conditions of the theorem (we defer the proof to later):
	\begin{equation}
		\label{eq:Xnorm_ub}
		\normlt{X}^2 \leq c \parens*{ \normlt{x_*}^2 + \frac{\norm{\varepsilon}^2}{n} }.
	\end{equation}
	We first consider the (trivial) case where $\norm{\varepsilon}^2 \geq n \norm{x_*}^2$.
	From \eqref{eq:Xnorm_ub} and the triangle inequality, we have
	\begin{align*}
		\nucnormlt{X X^* - x_* x_*^*}
		&\leq \normlt{X}^2 + \normlt{x_*}^2 \\
		&\leq c \frac{\norm{\varepsilon}^2}{n},
	\end{align*}
	and
	\begin{align*}
		\normlt{\xhat - x_*}
		&\leq \normlt{X} + \normlt{x_*} \\
		&\leq c \frac{\norm{\varepsilon}}{\sqrt{n}}.
	\end{align*}
	Thus from now on assume $\norm{\varepsilon}^2 \leq n \normlt{x_*}^2$.
	
	By \eqref{eq:Xnorm_ub} and related calculations (again, we defer these to later),
	we can then derive a number of useful inequalities:	
	\begin{align}
		\normlt{X}&\leq c \normlt{x_*} \label{eq:X_ub_2} \\
		\normlt{\PG X} &\leq c \normlt{x_*} \label{eq:PGX_ub} \\
		\norm{\alpha(\PG X) + \alpha(x_*) } &\leq c \sqrt{n} \normlt{x_*}. \label{eq:alphaPGX_ub}
	\end{align}
	\Cref{lem:ab_ineq}, \eqref{eq:PG_lb}, \eqref{eq:PGp_ub}, \eqref{eq:alphaPGX_ub}, and the triangle inequality imply
	\begin{align*}
		&\frac{1}{\sqrt{n}} \norm{\alpha(X) - \alpha(x_*)} \\
		&\quad\geq \frac{1}{\sqrt{n}} \norm{\alpha(\PG X) - \alpha(x_*)} - \frac{1}{\sqrt{n}} \norm{\alpha(X) - \alpha(\PG X)} \\
		&\quad\geq \frac{\norm{\beta(\PG X) - \beta(x_*)}_1}{\sqrt{n} \norm{\alpha(\PG X) + \alpha(x_*)}} - \frac{1}{\sqrt{n}} \norm{\alpha(\PGp X)} \\
		&\quad\geq c \frac{\nucnormlt{\PG X X^* \PG - x_* x_*^*}}{\normlt{x_*}} - \delta^{1/2} \norm{X}.
	\end{align*}
	Furthermore, using \eqref{eq:X_ub_2}, \eqref{eq:PGX_ub}, we have
	\begin{align*}
		&\nucnormlt{\PG X X^* \PG - x_* x_*^*} \\
		&\quad\geq \nucnormlt{X X^* - x_* x_*^*} \\
		&\qquad - \nucnormlt{\PGp X X^*} - \nucnormlt{\PG X X^* \PGp} \\
		&\quad\geq \nucnormlt{X X^* - x_* x_*^*} \\
		&\qquad - \normlt{\PGp X} \normlt{X} - \normlt{\PG X} \normlt{\PGp X} \\
		&\quad\geq \nucnormlt{X X^* - x_* x_*^*} - c \normlt{x_*} \opnorm{\SigGp}^{1/2} \norm{X}.
	\end{align*}
	Next, \Cref{lem:nucnorm_lb} implies that, for some unit-norm $v \in \F^p$,
	\[
		\nucnormlt{\PG X X^* \PG - x_* x_*^*}
		\geq c \normlt{x_*} \normlt{x_* - \PG X v}.
	\]
	Combining the previous three displays and the inequality $\delta \geq c \opnorm{\SigGp}$ from \eqref{eq:PGp_opnorm_delta},
	we have
	\begin{align*}
		&\frac{1}{\sqrt{n}} \norm{\alpha(X) - \alpha(x_*)} \\
		&\quad\geq \frac{c}{\normlt{x_*}} \nucnormlt{X X^* - x_* x_*^*} \\
		&\qquad+ c'\normlt{x_* - \PG X v} - c'' \delta^{1/2} \norm{X},
	\end{align*}
	which implies
	\begin{equation}
		\label{eq:infdim_alpha_lb}
		\begin{aligned}
		&\frac{1}{n} \norm{\alpha(X) - \alpha(x_*)}^2 \\
		&\quad \geq \frac{c}{\normlt{x_*}^2} \nucnormlt{X X^* - x_* x_*^*}^2 \\
		&\qquad	+ c' \normlt{x_* - \PG X v}^2 - c'' \delta \norm{X}^2.
		\end{aligned}
	\end{equation}
	We now turn to upper bounding $\norm{\alpha(X) - \alpha(x_*)}^2$.
	We first consider the case where $X$ is a second-order critical point of \eqref{eq:opt_ncvx_sqrt}.
	\Cref{thm:landscape_sqrt} implies, for the same (unit-norm) $v$ as above,
	\begin{align*}
		&\frac{1}{n} \norm{\alpha(X) - \alpha(x_*)}^2 \\
		&\quad \leq \frac{2}{n} \ip{\varepsilon}{\alpha(X) - \alpha(x_*)} + \lambda (\norm{x_*}^2 - \norm{X}^2) \\
		&\qquad + \frac{1}{\constF p - 1} \parens*{\frac{1}{n} \norm{\alpha(x_* - X v)}^2 + \lambda \norm{x_* - X v}^2} \\
		&\quad\leq \frac{2}{n} \norm{\varepsilon} \norm{\alpha(X) - \alpha(x_*)} + \lambda (\norm{x_*}^2 - \norm{X}^2) \\
		&\qquad + \frac{c}{p} \parens*{ \normlt{x_* - \PG X v }^2 + \delta \norm{Xv}^2 + \lambda (\norm{x_*}^2 + \norm{Xv}^2) },
	\end{align*}
	where the second inequality used \eqref{eq:PG_conc} and \eqref{eq:PGp_ub}.
	This implies, recalling $\lambda \geq c \delta$,
	\begin{equation}
		\label{eq:infdim_alpha_ub}
		\begin{aligned}
		&\frac{1}{n} \norm{\alpha(X) - \alpha(x_*)}^2 \\
		&\quad \leq c \frac{\norm{\varepsilon}^2}{n} + c' \lambda (\norm{x_*}^2 - \norm{X}^2) \\
		&\qquad + \frac{c''}{p} ( \normlt{x_* - \PG X v }^2 + \lambda (\norm{x_*}^2 + \norm{X}^2 ) ).
		\end{aligned}
	\end{equation}
	Together with \eqref{eq:infdim_alpha_lb},
	with $p \geq c$ and $\lambda \geq c \delta$,
	we obtain
	\begin{equation*}
		\frac{1}{\normlt{x_*}^2} \nucnormlt{X X^* - x_* x_*^*}^2
		\leq c \frac{\norm{\varepsilon}^2}{n} + c' \lambda \norm{x_*}^2.
	\end{equation*}
	The result immediately follows (again \Cref{lem:nucnorm_lb} to bound $\min_{\abs{s} = 1}~\normlt{\xhat - s x_*}$).
	
	In the case where $X = \recmap U$ for a second-order critical point $U$ of \eqref{eq:phasecut_gen},
	similarly to the proof of \Cref{thm:finite_subG} above,
	\Cref{thm:landscape_phasecut} and similar arguments as above give an inequality identical (within constants) to \eqref{eq:infdim_alpha_ub}.
	The result follows similarly.

	Finally, we prove the inequalities \eqref{eq:Xnorm_ub}--\eqref{eq:alphaPGX_ub}.
	By \eqref{eq:PG_conc} and \eqref{eq:PGp_ub},
	\begin{align*}
		\frac{1}{\sqrt{n}}\norm{\alpha(X)}
		&\geq \frac{1}{\sqrt{n}} \norm{\alpha(\PG X)} - \frac{1}{\sqrt{n}} \norm{\alpha(\PGp X)} \\
		&\geq c \normlt{\PG X} - \sqrt{\delta} \norm{X} \\
		&\geq c \normlt{X} - c' \sqrt{\delta} \norm{X},
	\end{align*}
	where the last inequality uses the fact that $\delta \geq c \opnorm{\SigG}$.
	Some algebra implies
	\[
		\frac{1}{n} \norm{\alpha(X)}^2 \geq c \normlt{X}^2 - c' \delta \norm{X}^2
	\]
	and
	\[
		\frac{1}{n} \norm{\alpha(X)}^2 \geq c \normlt{\PG X}^2 - c' \delta \norm{X}^2.
	\]
	Together with \Cref{lem:a_ineq} and $\lambda \geq c \delta$,
	this gives, for $X$ as in the theorem statement,
	\begin{align*}
		\max\{ \normlt{X}^2, \normlt{\PG X}^2 \}
		&\leq \frac{c}{n} \norm{\alpha(X)}^2 + c' \delta \norm{X}^2 \\
		&\leq c \parens*{ \frac{1}{n} \norm{\alpha(X)}^2 + \lambda \norm{X}^2 } \\
		&\leq \frac{c}{n} \parens{ \norm{\alpha(x_*)}^2 + \norm{\varepsilon}^2 } \\
		&\leq c \normlt{x_*}^2 + c' \frac{\norm{\varepsilon}^2}{n},
	\end{align*}
	where the last inequality again used \eqref{eq:PG_conc}.
	We thus have \eqref{eq:Xnorm_ub}.
	In the case $\norm{\varepsilon}^2 \leq n \normlt{x_*}^2$, \eqref{eq:X_ub_2} and \eqref{eq:PGX_ub} also follow immediately.
	Combining \eqref{eq:PGX_ub} with \eqref{eq:PG_conc} gives \eqref{eq:alphaPGX_ub}.
\end{proof}

\section*{Acknowledgments}
\ifMS \else \fundingack{} \fi
The author thanks Jonathan Dong, Irène Waldspurger, and Christopher Criscitiello for helpful inspiration, discussions, and suggestions.
The author also thanks the anonymous reviewers for their many helpful suggestions to improve the paper and for pointing out many minor errors.

\bibliographystyle{IEEEtran}
\bibliography{refs}

\ifMS
\begin{IEEEbiographynophoto}{Andrew D. McRae}
	received B.S., M.S., and Ph.D.\ degrees from the Georgia Institute of Technology in 2015, 2016, and 2022, respectively.
	He is currently an assistant professor at the École nationale des ponts et chaussées (ENPC) in the CERMICS group.
	Prior to this, in 2022-2025, he was a postdoctoral researcher
	with the Institute of Mathematics at the École Polytechnique Fédérale de Lausanne (EPFL).
	His research interests are in high-dimensional statistics and signal processing with a particular focus on the role of	optimization.
\end{IEEEbiographynophoto}
\fi
\end{document}